\numberwithin{equation}{section}
\numberwithin{figure}{section}
\theoremstyle{plain}
\newtheorem{thm}{\protect\theoremname}
\theoremstyle{definition}
\newtheorem{defn}[thm]{\protect\definitionname}
\theoremstyle{plain}
\newtheorem{lem}[thm]{\protect\lemmaname}
\theoremstyle{definition}
\newtheorem{problem}[thm]{\protect\problemname}
\theoremstyle{plain}
\newtheorem{prop}[thm]{\protect\propositionname}
\theoremstyle{definition}
\newtheorem{example}[thm]{\protect\examplename}
\theoremstyle{remark}
\newtheorem{rem}[thm]{\protect\remarkname}
\theoremstyle{plain}
\newtheorem{cor}[thm]{\protect\corollaryname}
\providecommand{\corollaryname}{Corollary}
\providecommand{\definitionname}{Definition}
\providecommand{\examplename}{Example}
\providecommand{\lemmaname}{Lemma}
\providecommand{\problemname}{Problem}
\providecommand{\propositionname}{Proposition}
\providecommand{\remarkname}{Remark}
\providecommand{\theoremname}{Theorem}
\begin{document}
\title[Markov conditions for continuity of computable functions ]{A generalization of Markov\textquoteright s approach to the continuity
problem for Type 1 computable functions }
\author{Emmanuel Rauzy }
\thanks{emmanuel.rauzy.14@normalesup.org, Universität der Bundeswehr München.
The author is funded by an Alexander von Humboldt Research Fellowship.}
\begin{abstract}
We axiomatize and generalize Markov\textquoteright s approach to the
continuity problem for Type 1 computable functions, i.e. the problem
of finding sufficient conditions on a computable topological space
to obtain a theorem of the form « computable functions are (effectively)
continuous ». We introduce different notions of \emph{effective closure.
}These notions of effective closure lead to different notions of \emph{effective
discontinuity} \emph{at a point}. We give conditions that prevent
computable functions from having effective discontinuities. We finally
show that results that forbid effective discontinuities can be turned
into (abstract) continuity results on spaces where the closure and
effective closure of semi-decidable sets naturally coincide -this
happens for instance on spaces which admit a dense and computable
sequence. 
\end{abstract}

\maketitle

\section{\label{sec:Introduction}Introduction}

In his 1937 seminal paper \cite{Turing1937}, and in the subsequent
corrigendum \cite{Turing1938}, Alan Turing gave a possible definition
for what it means for a real function to be computable. 

His definition is as follows. 

One first fixes a preferred description for real numbers: in this
case, a real number is given by a Turing machine that computes rational
approximations of this number to any desired precision. 

One can then define a computable function of a real variable to be
a function for which it is possible, given the description of a real
number, to compute a description of its image. 

This notion of computability is now known as Type 1 computability.
The reals that admit descriptions via Turing machine as above are
called the computable reals, the set of computable reals is denoted
$\mathbb{R}_{c}$. 

A remarkable result in this setting is that a total Type 1 computable
function $f:\mathbb{R}_{c}\rightarrow\mathbb{R}_{c}$ must be continuous,
and even \emph{effectively continuous}. 

What is now known as the ``continuity problem'' \cite{SPREEN_2016}
is the question of to what extent can this result be generalized:
what are the weakest conditions that enforce computable functions
between computable topological spaces to always be continuous? and
effectively continuous? One must distinguish between conditions on
domains and conditions on codomains, those are in general very different.
Results which partially address this problem are called ``continuity
results'' or ``continuity theorems''. 

Just as the unsolvability of the halting problem and the Rice and
Rice-Shapiro theorems can be understood as showing that ``information
on the behavior of a Turing machine can be obtained only by running
this machine'', i.e. via the UTM theorem, continuity theorems on
$\mathbb{R}_{c}$ tend to show that information on a real number described
by a Turing machine that produces rational approximation of it can
be obtained only by running the given machine, and using the produced
approximations. 

Note that continuity results do not relativize: the function $\delta_{0}:\mathbb{R}_{c}\rightarrow\mathbb{R}_{c}$,
which maps $0$ to $1$ and all other computable reals to $0$, is
computable modulo the halting problem. This differs from what happens
in the Type 2 theory of effectivity, which is the modern approach
to computable analysis \cite{Brattka2021}: the functions that are
computable with respect to the Cauchy representation of the real numbers
are continuous, and this remain true no matter what oracle we allow.
Even more: a function $f:\mathbb{R}\rightarrow\mathbb{R}$ is continuous
if and only if it is Type 2 computable modulo some oracle. 

\bigskip

Historically, the first continuity result is probably that of Banach
and Mazur \cite{Banach1937}, which is accounted for in \cite{Mazur1963}. 

The notion of ``computable function'' considered by Banach and Mazur
is not without flaws (a Banach-Mazur computable function is not attached
to a finite description, and, on peculiar domains, there can exist
continuously many Banach-Mazur computable functions). However, as
Hertling has showed in \cite{Hertling2002} that an effective continuity
theorem cannot be proved for Banach-Mazur computable functions, Banach-Mazur
computability can now be seen as a kind of reverse mathematics tool:
an effective continuity theorem cannot follow from statements that
are true of Banach-Mazur computability. We discuss this in Section
\ref{subsec:Banach-Mazur-computability}.

Follow the non-effective discontinuity result of Markov \cite{Markov1963},
the effective continuity theorem of Kreisel-Lacombe-Schoenfield \cite{Kreisel1957}
on Baire space, the effective continuity theorem of Ceitin on any
computable Polish space \cite{Ceitin1967}, which is accounted for
in Kushner \cite{Kushner1984}. This last result of Ceitin is usually
known as the Kreisel-Lacombe-Schoenfield-Ceitin theorem, or KLST\footnote{Ceitin is also often written Tseitin}
Theorem for short. 

At approximately the same time appears the article of Moschovakis
``Recursive metric spaces'' \cite{Moschovakis1964}. We want to
note here that the result of Moschovakis from \cite{Moschovakis1964}
is often misquoted. Indeed, there is proved a result strictly stronger
than Ceitin's, replacing the assumption of existence of a computable
and dense sequence by an \emph{effective Axiom of Choice} condition.
This effective axiom of choice takes the form of an algorithm that,
given a semi-decidable set and an open ball which it intersects, produces
a point in this intersection. In that same article, Moschovakis proved
that two a priori different notions of ``effectively open set''
agree on recursive Polish spaces: the Lacombe sets, computable unions
of basic open sets, are exactly the semi-decidable sets for which
it is possible, given a point of the set, to produce an open ball
around this point still contained in the original set. This result
is discussed in the author's paper \cite{RAUZY2023}. 

In 1912, in \cite{Borel1912}, Borel suggested that although it seems
clear that being able to compute arbitrarily good approximations of
a number does not allow to recognize this number, one might attempt
to use \emph{the size of the definition one has access to} to bridge
the gap between obtaining approximations, that could correspond to
infinitely many reals, and actually recognizing a number. In \cite{Hoyrup2016},
Hoyrup and Rojas formalized Borel's remark thanks to Kolmogorov complexity,
and showed that the proposed method, while insufficient to define
discontinuous computable functions, does allow to define non open
semi-decidable sets. This sheds new light on a well known example
of Friedberg \cite{Friedberg1958}. This approach even provides a
characterization of the Ershov topology on a set consisting of a single
converging sequence together with its limit: in this case, the non-open
semi-decidable sets all arise from Borel's proposed method. Is also
constructed there a semi-decidable set which is not $\Sigma_{2}^{0}$
in the classical Borel hierarchy, thus higher than Friedberg's example. 

One could say that this approach gives the most modern point of view
on the continuity problem. We will however not follow it here. 

In \cite{Spreen1984}, in a joint work with Paul Young, and in subsequent
articles \cite{Spr98,Spreen_2010,SPREEN_2016}, Dieter Spreen gave
an axiomatic approach to the continuity problem in topological spaces
in terms of ``witness of non-inclusion''. A computable function
between computable topological spaces has a witness for non-inclusion
if there is a procedure which can, whenever the preimage of an open
set in the codomain is not included in an open set of the domain,
produce a point in the difference (in fact one is allowed to change
slightly the open set of the domain to a smaller semi-decidable set).
In \cite{Spreen1984} it was shown that this condition automatically
implies effective continuity of the considered function, assuming
that computing limits is possible on the domain of this map. In \cite{Spr98},
the point of view is shifted: one does not ask anymore whether a function
has a witness for non-inclusion, but whether two effective topologies
on the same numbered set have a realizer for non-inclusion with respect
to one another, i.e. a program that will produce a point separati
ng two open sets when one is not a subset of the other. Spreen then
proves that an effective topology which has a realizer for non-inclusion
with respect to another effective topology is effectively finer than
the latter (this also holds only if computing limits is possible).
One then recovers continuity results for sets of functions by considering
their initial topologies. This allows to recover theorems of Ceitin
\cite{Ceitin1967}, Kreisel-Lacombe-Schoenfield \cite{Kreisel1957},
Myhill and Sheperdson \cite{Myhill1955} in a uniform manner. 

Finally, in \cite{Hertling1996}, Peter Hertling gave necessary and
sufficient conditions for a computable function to be effectively
continuous. While this does not solve the continuity problem -because
the given conditions depend on the function and not only on the space
on which it is defined- this result provides a good understanding
of the continuity problem. The result proven in \cite{Hertling1996}
says that a function $f$ defined on a subset of a computably separable
computable metric space is continuous if and only if for each $\epsilon>0$,
there is a dense and computable sequence, not in the domain of $f$,
but where $f$ ``seems to be defined, up to $\epsilon$''. (And
the dependence of this sequence in $\epsilon$ should be computable.)
See \cite{Hertling1996} for more details and for proper statements. 

\bigskip

Note that the above results are all set in Markovian computable analysis.
Most of these results have counterparts in various forms of constructive
mathematics. In \cite{Beeson1976}, Beeson gave a proof of the KLST
Theorem in the system \textbf{HA} of Heyting Arithmetic with Markov's
principle (a binary sequence must contain a $1$ or be identically
null). Ishihara gave in \cite{Ishihara1991,Ishihara1992} constructive
proofs of the KLST theorem and of various weaker continuity theorems,
and furthermore he pinpointed various principles that are logically
equivalent to these continuity results, thus precisely classifying
them in terms of constructive reverse mathematics. 

Recently, Bauer \cite{Bauer2023} formalized in synthetic topology
Spreen's generalization of the KLST theorem. Even if one were interested
solely in Markov computability, this paper is very interesting, because
it provides a very concise proof of Spreen's result which highlights
each of its necessary ingredient. We will refer ourselves to several
concepts introduced in \cite{Bauer2023}, in particular that of a
\emph{intrinsic subset}. 

\bigskip

Our purpose here is to give very simple conditions on a computable
topological space that impose continuity of computable functions.
As Spreen, we follow an axiomatic approach, giving natural conditions
on a space that impose continuity of computable functions, but the
conditions we propose are different from his, and should thus provide
a new understanding of the continuity problem. 

Our approach is in fact very much inspired by Markov's article \cite{Markov1963}.
In this article, it is shown that on the set $\mathbb{R}_{c}$ of
computable reals, the limit of a computable sequence cannot be told
apart from the elements of this sequence \cite[Theorem 4.2.2]{Markov1963},
and that this preliminary result prevents functions defined on $\mathbb{R}_{c}$
from having effective discontinuities \cite[Theorem 5.1]{Markov1963}.
We axiomatize Markov's result: we say that a space satisfies a \emph{Markov
condition} if, whenever a point $y$ is effectively adherent to a
set $A$, then $\{y\}$ is not a semi-decidable subset of $\{y\}\cup A$.
We then show that computable functions defined on a space satisfying
a Markov condition cannot have effective discontinuities. This is
thus Theorem 3 of \cite{Markov1963} in its most general setting.

Finally, we note that for a function whose domain is a spaces where
closure and effective closure of semi-decidable sets naturally agree
and whose codomain is a space where points have neighborhood bases
of co-semi-decidable sets, this ``no effective discontinuities''
result turns into an abstract continuity result. The property ``closure
and effective closure of semi-decidable sets agree'' appears as a
natural generalization of effective separability, generalization which
plays a crucial role for continuity results about Markov computable
functions. 

The results we present, while not implied by already known results,
are not strictly more general either. In particular we do not prove
effective continuity results. However, we provide easily proven results
which are aimed at providing a better understanding of the continuity
problem. We introduce new definitions (effective closure, effective
discontinuities, Markov conditions, multi-numberings of neighborhoods,
effective neighborhood bases) which seem to us relevant to the continuity
problem and which had previously not been stated in the degree of
generality we consider. For instance, it is often supposed that every
point has a computably enumerable basis of neighborhood. In this case,
the distinction between the \emph{effective closure} and the \emph{sequential
effective closure} becomes irrelevant, and ``effective discontinuities''
can be defined in terms of sequences. But this is not the case in
general, and a general definition of an ``effective discontinuity''
had never been written out before. 

Thus the present article is only supplemental to the important articles
of Ceitin \cite{Ceitin1967} and Moschovakis \cite{Moschovakis1964}
and the account of their results by Kushner \cite{Kushner1984}, and
to the more recent work of Hertling \cite{Hertling1996}, Spreen \cite{Spreen1990,Spr98,Spreen_2010,SPREEN_2016},
Hoyrup and Rojas \cite{Hoyrup2016}. 

The notion of a computable topological space we consider is that introduced
in the author's paper \cite{RAUZY2023}, it is the most general notion
currently available, as it was not obtained by effectivizing a notion
of basis, but by directly effectivizing the notion of topological
space. The same definition was introduced independently by Bauer in
\cite{Bauer2023} in the context of synthetic topology (and thus it
is only the same definition modulo translation). A \emph{Type 1} \emph{computable
topological space} is a quadruple $(X,\nu,\mathcal{T},\tau)$, where:
\begin{itemize}
\item $(X,\mathcal{T})$ is a classical topological space. 
\item $\nu$ is a numbering of $X$. 
\item $\tau$ is a numbering of a subset of $\mathcal{T}$ whose image generates
$\mathcal{T}$ when seen as a (classical) basis. Open sets in the
image of $\tau$ are the \emph{effective open sets, }they are asked
to be uniformly semi-decidable sets. 
\item The operations of taking finite intersection and computable unions
should be computable for $\tau$. 
\end{itemize}
This is detailed in Section \ref{part: Preliminaries}. 

It is clear that with this broad definition of Type 1 computable topological
space, computable functions between computable topological spaces
do not have to be continuous. In other words, a computable topology
on a numbered set can be very far from the Ershov topology, the topology
of semi-decidable sets. For instance, every topology on a finite set
(equipped with its natural decidable numbering) can be rendered effective,
and since functions between finite sets with decidable numberings
are all computable, we obtain many computable functions that are not
continuous. 

\bigskip

We start by defining the effective closure of a subset $A$ of a computable
topological space $(X,\nu,\mathcal{T},\tau)$: a point $x$ is \emph{effectively
adherent} to $A$ if there is a program that, on input the $\tau$-name
of an open set $O$ that contains $x$, produces the $\nu$-name of
a point in $A\cap O$. We denote this by $x\in\overline{A}^{+}$.
In the text we also define two other notions of effective closure:
\emph{effective sequential closure} and \emph{normed effective sequential
closure}. 

We can then define effective discontinuities. Let $f:X\rightarrow Y$
be a computable function between computable topological spaces $(X,\nu,\mathcal{T}_{1},\tau_{1})$
and $(Y,\mu,\mathcal{T}_{2},\tau_{2})$. An \emph{effective discontinuity}
at a point $x$ is an effective open set $O_{2}\in\mathcal{T}_{2}$
which contains $f(x)$, but such that $x$ belongs to the effective
closure of $f^{-1}(O_{2})^{c}$, the complement in $X$ of $f^{-1}(O_{2})$.
Note that each different notion of effective closure gives rise to
a notion of effective discontinuity. 

We can now state our main definition, which we name after Markov,
as it is underlying the article \cite{Markov1963}. (With our vocabulary,
Theorem 4.2.2 of \cite{Markov1963} thus reads: $\mathbb{R}_{c}$
satisfies a Markov condition with respect to the effective sequential
closure.)
\begin{defn}
Let $(X,\nu,\mathcal{T},\tau)$ be a computable topological space.
We say that it satisfies \emph{a Markov condition with respect to
effective closure }if for any $x$ in $X$ and any subset $A$ of
$X$ that does not contain $x$, if $x$ is effectively adherent to
$A$, then $\{x\}$ is not a semi-decidable subset of $A\cup\{x\}$. 
\end{defn}

Again, in the text, we also introduce the Markov condition with respect
to the effective sequential closure and to the normed effective sequential
closure. We note in Theorem \ref{thm: Comp Sober =00003D> Markov cond Normed }
that any computably sober space satisfies the Markov conditions with
respect to the normed effective sequential closure. 

In any numbered set $(X,\nu)$, denote by $\rightsquigarrow_{\nu}$
the relation on $\mathcal{P}(X)$ given by: 
\[
A\rightsquigarrow_{\nu}B\iff B\text{ is not a \ensuremath{\nu}-semi-decidable subset of }A\cup B.
\]

The definition above is then summed up as: 
\[
\forall x\in X,\,\forall A\subseteq X,\,x\notin A\,\&\,x\in\overline{A}^{+}\implies A\rightsquigarrow_{\nu}\{x\}.
\]

This condition seems to us to express in a very precise way the link
between the computable topology on $X$ and properties of $\nu$ that
can enforce continuity of computable functions. Indeed, the implication
written above renders explicit the control that the topology has on
the numbering $\nu$: the statement $A\rightsquigarrow_{\nu}\{x\}$,
which does not make reference to the topology, is governed by the
statement $x\in\overline{A}^{+}$. Markov conditions imply that \emph{if
a point $x$ is effectively adherent to a set $A$,} (which implies
in particular that it cannot be separated from $A$ by an open set,
this is a topological condition), \emph{then it cannot be separated
from $A$ by a semi-decidable set }(condition on $\nu$ that is independent
of the topology)\emph{.}

The first lemma we present is: 
\begin{lem}
\label{lem: No eff disc 1-1}Suppose that $(X,\nu,\mathcal{T}_{1},\tau_{1})$
is a computable topological space that satisfies a Markov condition\emph{
}with respect to effective closure, and that $(Y,\mu,\mathcal{T}_{2},\tau_{2})$
is any computable topological space.

Then $(\nu,\mu)$-computable functions between $X$ and $Y$ cannot
have effective discontinuities with respect to $(\mathcal{T}_{1},\tau_{1})$
and $(\mathcal{T}_{2},\tau_{2})$. 
\end{lem}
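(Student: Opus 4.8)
The plan is to argue by contradiction, directly unwinding the definitions of "effective discontinuity" and of the Markov condition. So suppose $f : X \to Y$ is $(\nu,\mu)$-computable but has an effective discontinuity at some point $x \in X$: there is an effective open set $O_2 \in \mathcal{T}_2$ with $f(x) \in O_2$, yet $x \in \overline{f^{-1}(O_2)^c}^{+}$. Write $A = f^{-1}(O_2)^c = f^{-1}(O_2^c)$. Since $f(x) \in O_2$ we have $x \notin A$, so the hypotheses of the Markov condition are in place for the pair $(x, A)$: because $x \in \overline{A}^{+}$, the Markov condition lets us conclude $A \rightsquigarrow_\nu \{x\}$, i.e. $\{x\}$ is not a $\nu$-semi-decidable subset of $A \cup \{x\}$.

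The crux is then to derive the opposite, namely that $\{x\}$ \emph{is} a semi-decidable subset of $A \cup \{x\}$, thereby producing the contradiction. This is where computability of $f$ and semi-decidability of the effective open set $O_2$ are used: given a $\nu$-name for a point $z \in A \cup \{x\}$, we compute (using a realizer for $f$) a $\mu$-name for $f(z) \in Y$, and then run the semi-decision procedure for the uniformly semi-decidable set $O_2$ on that name. If $z = x$ then $f(z) = f(x) \in O_2$, so this procedure halts; if $z \in A$ then $f(z) \in O_2^c$, so it never halts. Hence the set of names in $A \cup \{x\}$ that get accepted is exactly the set of $\nu$-names of $x$, which shows $\{x\}$ is semi-decidable relative to $A \cup \{x\}$. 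This contradicts the conclusion of the previous paragraph, so no effective discontinuity can exist.

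The one point requiring a little care — and the step I expect to be the main (mild) obstacle — is the bookkeeping around the notion of a "semi-decidable subset of $A \cup \{x\}$": one must check that the relativized semi-decision procedure built above matches the definition used in the statement $A \rightsquigarrow_\nu \{x\}$, in particular that it is only required to behave correctly on names of points actually lying in $A \cup \{x\}$ (it may do anything on other inputs), and that composing a realizer of $f$ with the semi-decider of $O_2$ yields a procedure of the right type. Everything else is a direct translation between the definition of effective discontinuity (via the effective closure $\overline{\cdot}^{+}$), the definition of the relation $\rightsquigarrow_\nu$, and the Markov condition; no topological input beyond "$O_2$ is a uniformly semi-decidable set containing $f(x)$" is needed, which is exactly why the hypothesis on the codomain $(Y,\mu,\mathcal{T}_2,\tau_2)$ can be left completely unrestricted.
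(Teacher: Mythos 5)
Your proposal is correct and follows essentially the same route as the paper: argue by contradiction, apply the Markov condition to the pair $(x, A)$ with $A = f^{-1}(O_2)^c$ to get $A \rightsquigarrow_\nu \{x\}$, and then contradict this by observing that $f^{-1}(O_2)$ is $\nu$-semi-decidable (realizer of $f$ composed with the semi-decider for the effective open set $O_2$) and that its intersection with $A \cup \{x\}$ is exactly $\{x\}$. The paper packages the last step as a citation of Proposition \ref{prop: Semi-decidable set intersected } rather than unfolding the composed procedure, but the argument is the same.
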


We then introduce identification conditions between the closure and
effective closure of semi-decidable sets, that will allow us to transform
discontinuities into effective discontinuities. A computable topological
space $(X,\nu,\mathcal{T},\tau)$ satisfies an \emph{identification
condition between the closure and the effective closure for semi-decidable
sets }if for any $\nu$-semi-decidable set $A$, the closure and effective
closure of $A$ agree, i.e. $\overline{A}=\overline{A}^{+}$. 

In terms of the constructivist logic used in the work of Bauer \cite{Bauer2000},
this statement can be expressed as the reduction of a double negation:
\[
\neg\neg(x\in\overline{A})\implies x\in\overline{A}.
\]
Note that here the fact that $x\in\overline{A}$ is taken as the statement:
\[
\forall O\in\mathcal{T},\,x\in O\implies\exists a\in A\cap O,
\]
and the effective version of this statement is exactly what we denote
by $x\in\overline{A}^{+}$.

We note in Proposition \ref{prop: Identification condition co-semi-decidable sets}
that, if we were to replace semi-decidable sets by co-semi-decidable
sets in the definitions above, we would obtain much too restrictive
conditions, and thus ``identification conditions between the closure
and the effective closure for \emph{co-semi-decidable sets}'' are
of no interest. 

To be able to use these identification conditions, we have to introduce
conditions on the codomain of the considered functions. 

A well known example of Friedberg \cite{Friedberg1958} of a semi-decidable
set of $\mathbb{R}_{c}$ which is not open shows that the continuity
theorems for functions between metric spaces cannot be extended to
functions whose codomain is the Type 1 computable Sierpi\'{n}ski space\footnote{This is the set $\{0,1\}$, equipped with the topology generated by
$\{1\}$ and $\{0,1\}$, and with numbering $s$ given by $s(n)=0$
if $n\in K$ and $s(n)=1$ if $n\notin K$, where $K$ designates
the halting set.}. 

The crucial property that computable metric spaces have, and that
fails for the Sierpi\'{n}ski space, is that points have bases of neighborhoods
which are \emph{co-semi-decidable}, instead of semi-decidable as is
in general expected from a computable topological space. The fact
that this condition is central to establishing continuity results
was fist expressed by Dieter Spreen in \cite{Spr98}. 

In the present paper, neighborhood bases of co-semi-decidable sets
are required in order to apply the identification condition for semi-decidable
sets (as we pass from a neighborhood of a point in the codomain of
a function to the complement of its preimage in the domain, the original
neighborhood has to be co-semi-decidable if we want its complement
to be semi-decidable). 

We finally prove:
\begin{thm}
\label{thm:Continuity-result-eff-closure-1}Suppose that $(X,\nu,\mathcal{T}_{1},\tau_{1})$
is a computable topological space that satisfies a Markov condition
for the effective closure, and the identification condition between
the closure and the effective closure for semi-decidable sets. 

Suppose that $(Y,\mu,\mathcal{T}_{2},\tau_{2})$ is a computable topological
space where points have co-semi-decidable bases of neighborhood. 

Then $(\nu,\mu)$-computable functions between $X$ and $Y$ are $(\mathcal{T}_{1},\mathcal{T}_{2})$-continuous. 
\end{thm}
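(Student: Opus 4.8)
The plan is to reduce the continuity statement to Lemma \ref{lem: No eff disc 1-1} via the two extra hypotheses. Concretely, let $f:X\to Y$ be $(\nu,\mu)$-computable and suppose, for contradiction, that $f$ is not $(\mathcal{T}_1,\mathcal{T}_2)$-continuous at some point $x\in X$. Then there is an effective open set $O_2\in\mathcal{T}_2$ with $f(x)\in O_2$ such that $f^{-1}(O_2)$ is not a $\mathcal{T}_1$-neighbourhood of $x$, i.e. $x\in\overline{f^{-1}(O_2)^c}$ (classical closure). The first step is to use the co-semi-decidable neighbourhood basis on $Y$ to replace $O_2$ by a better set: pick a co-semi-decidable neighbourhood $V$ of $f(x)$ with $V\subseteq O_2$; then $f^{-1}(V)^c \supseteq f^{-1}(O_2)^c$, so $x\in\overline{f^{-1}(V)^c}$ as well, and now $f^{-1}(V)^c = f^{-1}(V^c)$ is $\nu$-semi-decidable, because $V^c$ is $\mu$-semi-decidable and $f$ is $(\nu,\mu)$-computable. (One should check that the interior of $V$ still contains $f(x)$, so $V^c$ does not contain $f(x)$; taking $V$ from a neighbourhood basis gives this.)

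The second step applies the identification condition. Set $A := f^{-1}(V^c)$, a $\nu$-semi-decidable subset of $X$ with $x\notin A$ and $x\in\overline{A}$. By the identification condition between closure and effective closure for semi-decidable sets, $\overline{A}=\overline{A}^{+}$, hence $x\in\overline{A}^{+}$: the point $x$ is effectively adherent to $A=f^{-1}(V^c)$. But $V\subseteq O_2$, so $f^{-1}(V^c)\supseteq f^{-1}(O_2)^c$, giving $x\in\overline{f^{-1}(O_2)^c}^{+}$ (effective adherence is monotone in the set: a program producing a $\nu$-name of a point of $A\cap O$ for every $O\ni x$ witnesses effective adherence to any superset of $A$ too; here we want adherence to $f^{-1}(O_2)^c$, which is contained in $A$, so we actually need the reverse — see below).

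The third step is the one to be careful about, and is where I expect the main friction. Effective adherence $x\in\overline{A}^+$ is \emph{not} obviously preserved when we pass from $A$ to a \emph{subset}; it is preserved under \emph{supersets}. So rather than trying to transfer effective adherence from $f^{-1}(V^c)$ to the smaller set $f^{-1}(O_2)^c$, the cleaner route is to work with $O_2$ replaced by $V$ throughout and recognize that an effective discontinuity is allowed to use \emph{any} effective open set of the codomain containing $f(x)$ — but $V$ need not be effective open. The fix is to note that since points of $Y$ have co-semi-decidable neighbourhood bases and $\mathcal{T}_2$ is generated by effective open sets, we may in fact choose $V$ so that its interior is itself an effective open set $O'_2\in\mathcal{T}_2$ containing $f(x)$ with $O'_2\subseteq V\subseteq O_2$; then $f^{-1}(O'_2)^c\supseteq f^{-1}(V)^c = A$ is impossible in the wrong direction, so instead observe $f^{-1}(O'_2)^c \subseteq f^{-1}(V^c)=A$ and — crucially — that the program witnessing $x\in\overline{A}^+$ produces points of $A\cap O$; since $A = f^{-1}(V^c)$ and on any open $O$ the produced point lands in $V^c$, hence its $f$-image avoids the interior $O'_2$, the same program witnesses $x\in\overline{f^{-1}(O'_2)^c}^+$. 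Thus $O'_2$ is an effective discontinuity of $f$ at $x$ with respect to the effective closure.

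Finally, this contradicts Lemma \ref{lem: No eff disc 1-1}: since $(X,\nu,\mathcal{T}_1,\tau_1)$ satisfies a Markov condition with respect to effective closure and $(Y,\mu,\mathcal{T}_2,\tau_2)$ is an arbitrary computable topological space, no $(\nu,\mu)$-computable function $X\to Y$ has an effective discontinuity with respect to effective closure. Hence no such $x$ and $O_2$ exist, and $f$ is $(\mathcal{T}_1,\mathcal{T}_2)$-continuous at every point. The only genuinely delicate point in the write-up is the bookkeeping in step three — ensuring the set whose effective closure we invoke is exactly the complement of the preimage of an \emph{effective} open set containing $f(x)$, and that the witnessing program for effective adherence transfers correctly along the inclusion of sets — everything else is a direct chaining of the hypotheses with the definition of effective discontinuity.
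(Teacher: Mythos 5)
Your proof is correct and follows essentially the same route as the paper, which factors the argument into Lemma \ref{lem: No eff disc 1-1} together with a lemma stating that a discontinuity plus the identification condition and co-semi-decidable neighbourhood bases yields an effective discontinuity; your step three even makes explicit the passage back to an effective open set $O_2'\subseteq V$, a bookkeeping point the paper's proof of that lemma leaves implicit. Note only that the displayed inclusion ``$f^{-1}(O_2')^c\subseteq f^{-1}(V^c)$'' is written backwards ($O_2'\subseteq V$ gives $f^{-1}(O_2')^c\supseteq f^{-1}(V^c)=A$), though your subsequent pointwise reasoning establishes the correct containment and applies monotonicity of effective adherence in the right direction.
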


In the text, we obtain two similar theorems, one for the effective
sequential closure, and one for the normed effective sequential closure.

We are not able to prove an effective continuity result based on the
conditions given above. Throughout the article, we make sure to define
the uniform conditions associated to the non-uniform conditions that
appear in Theorem \ref{thm:Continuity-result-eff-closure-1}, to have
a chance to prove an uniform continuity theorem. In Section \ref{subsec:Effective-Continuity-conjecture},
we detail why the effective axiom of choice condition given by Moschovakis
in \cite{Moschovakis1964} is very similar (yet more restrictive)
than the \emph{uniform identification condition between the closure
and effective closure} \emph{for semi-decidable sets, }and ask for
an effective continuity theorem that relies on this latter condition.

\bigskip

The present work is motivated by the study of computability on the
space of marked groups. A marked group is simply a countable group
$G$ together with a tuple of elements $S$ that generate it. Every
element of $G$ can then be seen as a word of $(S\cup S^{-1})^{*}$,
but different words may define the same element. The topology of the
space of marked group is defined by saying that $(G,S=(s_{1},...,s_{k}))$
and $(H,S'=(s'_{1},...,s'_{k}))$ are at distance at most $2^{-n}$
exactly when the words of length at most $n$ in $(S\cup S^{-1})^{*}$
that define the identity of $G$ are the same as the words of length
at most $n$ in $(S'\cup S'^{-1})^{*}$ that define the identity in
$H$, modulo the identification $s_{i}\mapsto s_{i}'$. 

In \cite{Rauzy2021}, the author has shown that many known continuity
theorems cannot be applied to the space of marked groups: it does
not have a computable and dense sequence and does not satisfy Moschovakis'
effective Choice Axiom. However, the results in \cite{Rauzy2021}
do not invalidate the possibility of applying the continuity theorem
presented here to the space of marked groups. 

The space of marked groups being an effectively complete recursive
metric space, it satisfies the Markov Conditions. The only remaining
question concerns the identification conditions. 

We thus ask: 
\begin{problem}
Does the space of marked groups satisfy identification conditions
between closure and effective closure for semi-decidable sets? 
\end{problem}

We are more inclined to thinking that the space of marked groups does
not satisfy such conditions. However, what is particularly interesting
to us is that if a space fails to satisfy these identification conditions,
there must be a point and a semi-decidable set which are the ones
that cause this condition to fail: failure of these conditions is
a localized phenomenon. Indeed, if these conditions fail, there is
a point $x$ and a set $A$ such that $x\in\overline{A}$ but $x\notin\overline{A}^{+}$.
Exhibiting such a pair in the space of marked groups would be very
interesting. On the contrary, the statement ``there is no computable
and dense sequence'' is not localized, we cannot really say ``where''
the set fails to have a dense and computable sequence. 

\subsection*{Acknowledgements }

I thank the anonymous referees for many remarks that led to the improvement
of this paper, in particular for the remark that no codomain conditions
are required for Lemma \ref{lem: No eff disc 1-1}. 

\section{\label{part: Preliminaries}Preliminaries }

\subsection{Subnumberings, multi-subnumberings }

A \emph{numbering} of a set $X$ is a surjection $\nu$ that maps
a subset of the set $\mathbb{N}$ of natural numbers onto $X$. 

For convenience, we introduce the notion of subnumbering. A \emph{subnumbering}
of a set $X$ is a numbering of a subset of $X$. 

A subnumbering can be seen as a numbering of a countable set $A$
together with an embedding of $A$ into a bigger set $X$. For instance,
considering the numbering of computable reals as a subnumbering of
the set of all real numbers indicates that we consider the set of
computable reals embedded in the set of real numbers. 

We denote this by $\nu:\subseteq\mathbb{N}\rightarrow X$. 

Denote by $\text{dom}(\nu)$ the domain of $\nu$. If $\nu$ is a
subnumbering of $X$, we call the image of $\nu$ the set of $\nu$\emph{-computable
points}. This set is denoted $X_{\nu}$. 

Denote by $\mathcal{N}_{X}$ the set of subnumberings of $X$. 

If $\nu(n)=x$, then $n$ is a $\nu$\emph{-name} of $x$. 

A \emph{multi-subnumbering} of $X$ is a map from $\mathbb{N}$ to
the power set of $X$. We denote this by $\nu:\subseteq\mathbb{N}\rightrightarrows X$,
which is just a synonym for $\nu:\mathbb{N}\rightarrow\mathcal{P}(X)$.
Multi-subnumberings are always total, but if $\nu$ is a multi-subnumbering
and $n$ is a number such that $\nu(n)=\emptyset$, then $n$ does
not describe anything, one can think of this as $\nu$ not being defined
at $n$. If $\nu$ is a multi-subnumbering, we denote by $\text{dom}(\nu)$
the set $\{n\in\mathbb{N},\,\nu(n)\ne\emptyset\}$. (Multi-numberings
are well known objects. Multi-representations were first investigated
by Schröder in \cite{Schroeder2003}. Multi-numberings are used for
instance by Spreen in \cite{Spreen_2010}.)

The set of multi-subnumberings on $X$ is denoted $\mathcal{MN}_{X}$.

In case $x\in\nu(n)$, we say that $n$ is a $\nu$\emph{-name }of
$x$. The number $n$ may not characterize $x$, and thus it constitutes
in general a partial description of $x$. The\emph{ set of $\nu$-computable
points} is the set of points that have a $\nu$-name. 
\begin{defn}
\label{def:Comp for multi numberings}A function $f:X\rightarrow Y$
between sets equipped with multi-subnumberings $\nu$ and $\mu$ is
called\emph{ $(\nu,\mu)$-computable} if there exists a partial computable
function $F:\subseteq\mathbb{N}\rightarrow\mathbb{N}$ defined at
least on $\text{dom}(\nu)$ such that $\forall n\in\text{dom}(\nu),\,f(\nu(n))\subseteq\mu(F(n))$. 
\end{defn}

Equivalently: the function $f$ is \emph{$(\nu,\mu)$-}computable
if there is a partial computable function $F:\subseteq\mathbb{N}\rightarrow\mathbb{N}$
such that, for any $x$ in $X$ and $n$ in $\mathbb{N}$, if $n$
is a $\nu$-name of $x$, then $F(n)$ is a $\mu$-name of $f(x)$. 

A subnumbering $\nu$ can be identified with a multi-subnumbering
$\hat{\nu}$ by putting:
\[
\forall n\in\text{dom}(\nu),\,\hat{\nu}(n)=\{\nu(n)\},
\]
\[
\forall n\notin\text{dom}(\nu),\,\hat{\nu}(n)=\emptyset.
\]

Thanks to this identification, Definition \ref{def:Comp for multi numberings}
above thus also provides a definition for computability with respect
to subnumberings. 

If $\nu$ and $\mu$ are two multi-subnumberings of a set $X$, define
$\nu\le\mu$ by 
\[
\nu\le\mu\iff\text{id}_{X}:X\rightarrow X\text{ is \ensuremath{(\nu,\mu)}-computable}.
\]

This means that $\nu$-names of points contain more information than
$\mu$-names. Define $\nu\equiv\mu$ if $\nu\le\mu$ and $\mu\le\nu$. 

\subsection{Product numbering, numbering of functions }

Throughout, we denote by $n,m\mapsto\langle n,m\rangle$ Cantor's
pairing function. 

If $\nu$ is a multi-numbering of $X$ and $\mu$ a multi-numbering
of $Y$, denote by $\nu\times\mu$ the multi-numbering of $X\times Y$
defined by: if $n$ is a $\nu$-name of a point $x$ of $X$, and
$m$ a $\mu$-name of a point $y$ of $Y$, then $\langle n,m\rangle$
is a $\nu\times\mu$-name of $(x,y)$, i.e. 
\[
(x,y)\in(\nu\times\mu)(\langle n,m\rangle)\iff x\in\nu(n)\,\&\,y\in\mu(m).
\]
This is the product multi-numbering. 

We fix a standard enumeration of all partial recursive function, denoted
$\varphi_{0}$, $\varphi_{1}$, $\varphi_{2}$,... We write $\varphi_{n}(k)\downarrow$
if $k\in\text{dom}(\varphi_{n})$ and $\varphi_{n}(k)\uparrow$ if
$k\notin\text{dom}(\varphi_{n})$. 

This allows us to define the numbering associated to computable functions. 
\begin{defn}
If $(X,\nu)$ and $(Y,\mu)$ are numbered sets, we define a subnumbering
$\mu^{\nu}$ of the set $Y^{X}$ of functions from $X$ to $Y$ as
follows: 
\begin{align*}
\text{dom}(\mu^{\nu}) & =\{i\in\mathbb{N}\vert\text{ dom}(\nu)\subseteq\text{dom}(\varphi_{i}),\\
 & \forall n,m\in\text{dom}(\nu),\nu(n)=\nu(m)\implies\mu(\varphi_{i}(n))=\mu(\varphi_{i}(m))\},
\end{align*}
\[
\forall i\in\text{dom}(\mu^{\nu}),\forall x\in X,\forall k\in\text{dom}(\nu),\,(x=\nu(k))\implies(\mu^{\nu}(i))(x)=\mu(\varphi_{i}(k)).
\]
\end{defn}

The image of $\mu^{\nu}$ is precisely the set of $(\nu,\mu)$-computable
functions. 

The numbering associated to computable sequences of point of $X$
is thus $\nu^{\text{id}_{\mathbb{N}}}$.

\subsection{Semi-decidable sets and co-semi-decidable sets }

Let $\nu$ be a multi-numbering of $X$. A subset $A$ of $X$ is
$\nu$\emph{-semi-decidable} when there is an algorithm that stops
exactly on the $\nu$-names of elements of $A$. 

We associate a numbering denoted $\nu_{SD}$ to semi-decidable subsets
of $X$. 

It is defined as follows:
\begin{defn}
Let $\nu$ be a multi-numbering on $X$. A subset $A$ of $X$ is
\emph{$\nu$-semi-decidable} if and only if there is a recursive function
$\varphi_{n}$ such that 
\[
\forall i\in\text{dom}(\nu),\,\varphi_{n}(i)\downarrow\iff\nu(i)\subseteq A.
\]

We define a subnumbering of $\mathcal{P}(X)$ by putting $\nu_{SD}(n)=A$
for each $n$ and $A$ as above. 
\end{defn}

One extends the previous definitions to numberings by using the identification
$\nu\mapsto\hat{\nu}$ between numberings and multi-numberings. 

A set is\emph{ $\nu$-co-semi-decidable }if its complement is $\nu$-semi-decidable.
We define as above a subnumbering $\nu_{coSD}$ of $\mathcal{P}(X)$.
For a subset $A$ of $X$, we denote by $A^{c}$ the complement of
$A$. Put: 
\[
\text{dom}(\nu_{coSD})=\text{dom}(\nu_{SD}),
\]
\[
\forall n\in\text{dom}(\nu_{SD}),\,\nu_{coSD}(n)=\nu_{SD}(n)^{c}.
\]

\subsection{Intrinsic embeddings\label{subsec:Intrinsic-embeddings}}

Let $(X,\nu)$ be a numbered set, and $A\subseteq X$ a subset of
$X$. The \emph{restriction of $\nu$ to $A$} is the numbering $\nu_{\vert A}$
of $A$ defined as follows: 
\[
\text{dom}(\nu_{\vert A})=\{n\in\text{dom}(\nu),\,\nu(n)\in A\},
\]
\[
\forall n\in\text{dom}(\nu_{\vert A}),\,\nu_{\vert A}(n)=\nu(n).
\]

If $B\subseteq A\subseteq X$, we say that $B$ is \emph{a $\nu$-semi-decidable
subset of $A$ }if $B$ is $\nu_{\vert A}$-semi-decidable. 

When $A$ is a subset of the numbered set $(X,\nu)$, we define a
subnumbering $(\nu_{SD})_{\cap A}$ of $\mathcal{P}(A)$ as follows:
\[
\text{dom}((\nu_{SD})_{\cap A})=\text{dom}(\nu_{SD});
\]
\[
\forall n\in\text{dom}((\nu_{SD})_{\cap A}),\,(\nu_{SD})_{\cap A}(n)=\nu_{SD}(n)\cap A.
\]

Thus $(\nu_{SD})_{\cap A}$ is simply the numbering of $\nu$-semi-decidable
sets (these are subsets of $X$), which we see as a subnumbering of
the set of subsets of $A$ by intersecting $\nu$-semi-decidable sets
with $A$. It is clear that such sets are always $\nu_{\vert A}$-semi-decidable: 
\begin{prop}
\label{prop: Semi-decidable set intersected }We have 
\[
(\nu_{\vert A})_{SD}\ge(\nu_{SD})_{\cap A}.
\]
\end{prop}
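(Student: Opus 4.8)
\textbf{Proof plan for Proposition \ref{prop: Semi-decidable set intersected }.}

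The plan is to unwind both numberings explicitly and show that the identity map on $\mathcal{P}(A)$ is $((\nu_{SD})_{\cap A},(\nu_{\vert A})_{SD})$-computable, which by definition of $\ge$ is exactly the claimed inequality. So I would fix a number $n\in\text{dom}((\nu_{SD})_{\cap A})=\text{dom}(\nu_{SD})$ and let $B=(\nu_{SD})_{\cap A}(n)=\nu_{SD}(n)\cap A$; the goal is to produce, uniformly and computably in $n$, a $(\nu_{\vert A})_{SD}$-name of this same set $B$, i.e. an index $m$ for a partial recursive function $\varphi_m$ such that for all $i\in\text{dom}(\nu_{\vert A})$ one has $\varphi_m(i)\downarrow\iff\nu_{\vert A}(i)\subseteq B$.

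The key observation is that $\text{dom}(\nu_{\vert A})=\{i\in\text{dom}(\nu):\nu(i)\in A\}$, so for $i$ in this set $\nu_{\vert A}(i)=\nu(i)$ is a singleton (we are in the numbering, not multi-numbering, case, using the identification $\nu\mapsto\hat\nu$), and the condition $\nu_{\vert A}(i)\subseteq B=\nu_{SD}(n)\cap A$ simplifies: since $\nu(i)\in A$ already, it is equivalent to $\nu(i)\subseteq\nu_{SD}(n)$, i.e. $\varphi_n(i)\downarrow$, where $\varphi_n$ is the semi-decision procedure witnessing $\nu_{SD}(n)$. Therefore I can simply take $m$ with $\varphi_m=\varphi_n$: for $i\in\text{dom}(\nu_{\vert A})\subseteq\text{dom}(\nu)$ we get $\varphi_m(i)\downarrow\iff\varphi_n(i)\downarrow\iff\nu(i)\subseteq\nu_{SD}(n)\iff\nu_{\vert A}(i)\subseteq B$. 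The map $n\mapsto m$ can be taken to be the identity (or whatever trivial computable function realizes the chosen standard enumeration convention), so this is uniform, and hence witnesses $(\nu_{\vert A})_{SD}\ge(\nu_{SD})_{\cap A}$.

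There is essentially no obstacle here; the only point requiring a moment's care is the bookkeeping about which ``restriction'' operation applies to what. One must check that $\nu_{SD}(n)\cap A$, as a subset of $A$, is indeed $\nu_{\vert A}$-semi-decidable \emph{with the same index} $n$, which boils down to the elementary set-theoretic identity $\nu(i)\in A \wedge \nu(i)\subseteq\nu_{SD}(n) \iff \nu(i)\in\nu_{SD}(n)\cap A$ valid for $i\in\text{dom}(\nu_{\vert A})$. Since the forward direction is the only thing the proposition asserts (the reverse inclusion is false in general — a $\nu_{\vert A}$-semi-decidable subset of $A$ need not extend to a $\nu$-semi-decidable subset of $X$), no further work is needed.
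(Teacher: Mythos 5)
Your proof is correct and is exactly the intended argument: the paper states this proposition without proof (prefaced only by ``It is clear that such sets are always $\nu_{\vert A}$-semi-decidable''), and your unwinding --- observing that the $\nu_{SD}$-index $n$ itself already serves as a $(\nu_{\vert A})_{SD}$-index for $\nu_{SD}(n)\cap A$, since for $i\in\text{dom}(\nu_{\vert A})$ the condition $\nu(i)\in\nu_{SD}(n)\cap A$ reduces to $\varphi_{n}(i)\downarrow$ --- is the natural way to fill in the details. Your closing remark that the reverse reduction fails in general is also accurate; that failure is precisely what the paper's notion of \emph{intrinsic subset} is designed to rule out.
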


\begin{defn}
[Bauer, \cite{Bauer2023}] Let $(X,\nu)$ be a numbered set. A subset
$A$ of $X$ is \emph{an intrinsic subset} of $(X,\nu)$ (or is \emph{intrinsically
embedded} in $(X,\nu)$) if the following equivalence of subnumberings
holds:
\[
(\nu_{\vert A})_{SD}\equiv(\nu_{SD})_{\cap A}.
\]
\end{defn}

Note that in particular these subnumbering should have the same image.
\begin{example}
[\cite{Bauer2023}] The following remarkable example of a non-intrinsic
embedding is based on Friedberg's example of a semi-decidable subset
of the Cantor space which is not open. A computable sequence of natural
numbers can be seen as a semi-decidable subset of $\mathbb{N}^{2}$,
by identifying the sequence $(u_{n})_{n\in\mathbb{N}}$ with its graph
$\{(n,u_{n}),n\in\mathbb{N}\}$. This defines a computable embedding,
but it is not an intrinsic embedding. See \cite{Bauer2023} for more
details. 
\end{example}

\subsection{Diagrams of relative semi-decidability }

If $(X,\nu)$ is a numbered set, we define a relation on $\mathcal{P}(X)$,
denoted $\rightsquigarrow_{\nu}$, defined by
\[
A\rightsquigarrow_{\nu}B\iff B\text{ is not a \ensuremath{\nu}-semi-decidable subset of }A\cup B.
\]
In other words $A\rightsquigarrow_{\nu}B$ holds if and only if $B\text{ is not }\nu_{\vert A\cup B}\text{ semi-decidable}$.
We call this relation the \emph{diagram of relative semi-decidability
of} $(X,\nu)$. It is in general not a transitive relation. 

In a topological space $X$ with a subset $A$, the points that cannot
``topologically'' be told apart from those of $A$ are the points
in the set $\overline{A}$, the closure of $A$. Thus we can expect
that, in contexts where computable functions are automatically continuous,
the relation $\rightsquigarrow_{\nu}$ should be related to the relation
$R$ given by: 
\[
R(A,B)\iff B\cap\overline{A}\ne\emptyset.
\]
The Markov conditions introduced in this article are precisely used
to formalize a link between $R$ and $\rightsquigarrow_{\nu}$.
\begin{example}
Let $W$ be the subnumbering of $\mathcal{P}(\mathbb{N})$ defined
by $W_{n}=\text{dom}(\varphi_{n})$, i.e. the standard numbering of
the set ${\displaystyle \Sigma_{1}^{0}}$ of c.e. subsets of $\mathbb{N}$.
We have, for any two r.e. subsets $A$ and $B$ of $\mathbb{N}$:
\[
\{A\}\rightsquigarrow_{W}\{B\}\iff B\subsetneq A.
\]
This follows easily from the Rice-Shapiro Theorem, which characterizes
$W$-semi-decidable sets, see \cite[P. 130]{Cutland1997}. The Rice-Shapiro
theorem thus characterizes those sets of c.e. sets $\mathcal{X}$
for which $\mathcal{X}\rightsquigarrow_{W}{\displaystyle \Sigma_{1}^{0}}$
holds. But this is very far from giving a complete description of
$\rightsquigarrow_{W}$. 
\begin{problem}
Characterize $\rightsquigarrow_{W}$. 
\end{problem}

\end{example}

\subsection{Computable topological spaces }

We follow the general definition given in \cite{RAUZY2023}. Discussion
of the differences with other definitions can be found there. Note
that Bauer introduced the exact same definition in \cite{Bauer2023}
in the context of synthetic topology, precisely in order to prove
the continuity theorem of Spreen in synthetic topology. 
\begin{defn}
\label{def:MAIN DEF}A \emph{Type 1 computable topological space}
is a quadruple $(X,\nu,\mathcal{T},\tau)$ where $X$ is a set, $\mathcal{T}\subseteq\mathcal{P}(X)$
is a topology on $X$, $\nu:\subseteq\mathbb{N}\rightarrow X$ is
a numbering of $X$, $\tau:\subseteq\mathbb{N}\rightarrow\mathcal{T}$
is a subnumbering of $\mathcal{T}$, and such that: 
\begin{enumerate}
\item The image of $\tau$ generates the topology $\mathcal{T}$;
\item The empty set and $X$ both belong to the image of $\tau$;
\item (Malcev's Condition). The open sets in the image of $\tau$ are uniformly
semi-decidable, i.e. $\tau\le\nu_{SD}$; 
\item The operations of taking (computable) unions and finite intersections
are computable, i.e.:
\begin{enumerate}
\item The function $\bigcup:\mathcal{T}^{\mathbb{N}}\rightarrow\mathcal{T}$
which maps a sequence of open sets $(A_{n})_{n\in\mathbb{N}}$ to
its union is $(\tau^{\text{id}_{\mathbb{N}}},\tau)$-computable;
\item The function $\bigcap:\mathcal{T}\times\mathcal{T}\rightarrow\mathcal{T}$
which maps a pair of open sets to its intersection is $(\tau\times\tau,\tau)$-computable. 
\end{enumerate}
\end{enumerate}
\end{defn}

The sets in the image of $\tau$ are called the \emph{effective open}
sets. 

This is in fact not the most general definition one should consider,
as we could allow $\tau$ and $\nu$ to be multi-numberings, leaving
all other conditions as they are. But we will not use this more general
definition here, and so we only mention it. 
\begin{defn}
A function $f:(X,\nu,\mathcal{T}_{1},\tau_{1})\rightarrow(Y,\mu,\mathcal{T}_{2},\tau_{2})$
is \emph{effectively continuous }if it is (classically) continuous,
and if furthermore the function $f^{-1}:\mathcal{T}_{2}\rightarrow\mathcal{T}_{1}$
is $(\tau_{2},\tau_{1})$-computable. 
\end{defn}

Note that if $f^{-1}:\mathcal{T}_{2}\rightarrow\mathcal{T}_{1}$ is
$(\tau_{2},\tau_{1})$-computable, it should map open sets that belong
to the image of $\tau_{2}$ to open sets that belong to the image
of $\tau_{1}$. 

Note also that a function can be effectively continuous without being
computable, for instance if $(\mathcal{T}_{2},\tau_{2})$ is the indiscrete
topology, every function to $(Y,\mu,\mathcal{T}_{2},\tau_{2})$ is
effectively continuous, without having to be computable. Such extreme
examples do not occur when we consider computable topologies close
to the Ershov topology, and in particular for computably sober spaces,
see the Section \ref{subsec:Sober-spaces}. 

\subsection{Examples of Type 1 computable topological spaces }

\subsubsection{Ershov topology }

Any set $X$ equipped with a numbering $\nu$ can be turned into a
computable topological space thanks to the Ershov topology: denote
by $\mathcal{T}_{SD}$ the topology generated by semi-decidable sets,
and by $\nu_{SD}$ the subnumbering of $\mathcal{T}_{SD}$ associated
to semi-decidable sets. The following proposition is straightforward. 
\begin{prop}
For any numbered set $(X,\nu)$, $(X,\nu,\mathcal{T}_{SD},\nu_{SD})$
is a Type 1 computable topological space. 
\end{prop}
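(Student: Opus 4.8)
The plan is to verify the four axioms of Definition \ref{def:MAIN DEF} directly for the quadruple $(X,\nu,\mathcal{T}_{SD},\nu_{SD})$, where $\mathcal{T}_{SD}$ is the topology generated by the $\nu$-semi-decidable sets and $\nu_{SD}$ is the canonical numbering of semi-decidable sets introduced above. Since $\mathcal{T}_{SD}$ is by definition the topology generated by the image of $\nu_{SD}$, axiom (1) is immediate. For axiom (2), I would observe that $\emptyset$ and $X$ are both $\nu$-semi-decidable, using a partial recursive function that never halts (for $\emptyset$) and one that halts everywhere, or more precisely on all of $\text{dom}(\nu)$ (for $X$); one then has to check these give genuine elements of $\text{dom}(\nu_{SD})$ according to the defining condition $\varphi_n(i)\downarrow \iff \nu(i)\subseteq A$, and fix their $\nu_{SD}$-indices. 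Axiom (3), Malcev's condition $\nu_{SD}\le\nu_{SD}$, is the identity reduction and hence trivially holds.

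The only real content is axiom (4): the computability of computable unions and of binary intersections, phrased as $(\nu_{SD}^{\,\text{id}_\mathbb{N}},\nu_{SD})$-computability of $\bigcup$ and $(\nu_{SD}\times\nu_{SD},\nu_{SD})$-computability of $\bigcap$. First I would unwind what a name of a sequence $(A_k)_{k\in\mathbb{N}}$ of semi-decidable sets is: by the definition of $\nu_{SD}^{\,\text{id}_\mathbb{N}}$, it is an index $i$ such that $\varphi_i(k)$ is a $\nu_{SD}$-name of $A_k$ for every $k$, i.e. $\varphi_{\varphi_i(k)}$ is a semidecision procedure for $A_k$ (halting on exactly the $\nu$-names of points of $A_k$). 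To compute a semidecision procedure for $\bigcup_k A_k$, on input a number $j$ one runs all the procedures $\varphi_{\varphi_i(k)}(j)$ for $k\in\mathbb{N}$ in a dovetailed fashion and halts as soon as any one of them halts; this halts on exactly the $\nu$-names of points of the union. Producing an index for this dovetailing procedure uniformly from $i$ is routine via the s-m-n theorem, and one then has to wrap it to land in $\text{dom}(\nu_{SD})$. For $\bigcap$, given $\nu_{SD}$-names $n,m$ of $A$ and $B$, on input $j$ one runs $\varphi_n(j)$ and $\varphi_m(j)$ and halts only if both halt; again s-m-n gives the index uniformly. A small point to be careful about: the defining clause of $\nu_{SD}$ is stated for the multi-numbering case with the condition $\nu(i)\subseteq A$ in place of $\nu(i)\in A$, so when $\nu$ is an honest (single-valued) numbering one should note these coincide under the identification $\nu\mapsto\hat\nu$, which is exactly the identification invoked just after the definition of $\nu_{coSD}$.

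I do not expect any genuine obstacle here — this is the ``straightforward'' proposition the text advertises. The mild subtleties are bookkeeping: making sure the produced indices actually satisfy the membership condition for $\text{dom}(\nu_{SD})$ (rather than merely semideciding the right set on $\text{dom}(\nu)$), and handling the dovetailing uniformly in the sequence-index $i$ so that the resulting map on indices is total recursive, as required since in Definition \ref{def:Comp for multi numberings} the realizer $F$ must be defined on all of $\text{dom}$ of the source numbering. None of this requires more than the standard s-m-n and universal machine machinery.
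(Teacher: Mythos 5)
Your verification is correct and is exactly the routine check the paper has in mind: it states this proposition as "straightforward" and omits the proof entirely, and your axiom-by-axiom argument (dovetailing for computable unions, running both semidecision procedures for binary intersections, s-m-n for uniformity) is the intended one. The only remark is that your worry about "wrapping" indices to land in $\text{dom}(\nu_{SD})$ is unnecessary, since any index whose procedure halts exactly on the $\nu$-names of points of the relevant set already satisfies the defining condition of $\text{dom}(\nu_{SD})$.
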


The Ershov topology can be seen as the topology that is closest to
the discrete topology amongst topologies that are ``constructively
acceptable'' \cite{Bauer2023}. But it does not have to be discrete.
It shares the following (obvious) characterization of the classical
discrete topology: 
\begin{prop}
Any computable function $f$ from $(X,\nu,\mathcal{T}_{SD},\nu_{SD})$
to a computable topological space $(Y,\mu,\mathcal{T}_{2},\tau_{2})$
is effectively continuous. 
\begin{prop}
The property above characterizes the Ershov topology. 
\end{prop}

\end{prop}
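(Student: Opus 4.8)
The plan is to test the hypothesised property on a single, canonical computable function: the identity map of $X$, viewed as going from the given space into the Ershov space $(X,\nu,\mathcal{T}_{SD},\nu_{SD})$.

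First, observe that $\mathrm{id}_{X}:(X,\nu,\mathcal{T},\tau)\to(X,\nu,\mathcal{T}_{SD},\nu_{SD})$ is $(\nu,\nu)$-computable, trivially witnessed by the identity function on $\mathbb{N}$. Hence, by the assumed property, it is effectively continuous. Unwinding the definition of effective continuity, this yields two pieces of information. (i) Classical continuity of $\mathrm{id}_{X}$ means $\mathcal{T}_{SD}\subseteq\mathcal{T}$: every $\nu$-semi-decidable set is open for $\mathcal{T}$. (ii) The map $\mathrm{id}_{X}^{-1}$, which is just $\mathrm{id}_{\mathcal{P}(X)}$ restricted to $\mathcal{T}_{SD}$, is $(\nu_{SD},\tau)$-computable; in particular, as noted immediately after the definition of effective continuity, it sends elements of the image of $\nu_{SD}$ to elements of the image of $\tau$, so every $\nu$-semi-decidable set is an effective open set and the passage from an $\nu_{SD}$-name to a $\tau$-name of the same set is uniformly computable. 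That is, $\nu_{SD}\le\tau$.

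For the reverse comparisons nothing new is needed. Malcev's condition (condition 3 of Definition \ref{def:MAIN DEF}) is precisely $\tau\le\nu_{SD}$, so combined with (ii) we obtain $\tau\equiv\nu_{SD}$. Moreover every effective open set is $\nu$-semi-decidable (again Malcev), hence lies in the topology $\mathcal{T}_{SD}$; since $\mathcal{T}$ is generated by the effective open sets and $\mathcal{T}_{SD}$ is a topology containing all of them, $\mathcal{T}\subseteq\mathcal{T}_{SD}$, and together with (i) this gives $\mathcal{T}=\mathcal{T}_{SD}$. Therefore $(X,\nu,\mathcal{T},\tau)$ coincides with the Ershov computable topological space $(X,\nu,\mathcal{T}_{SD},\nu_{SD})$ up to $\equiv$-equivalence of the numbering of effective open sets, which is the asserted characterisation.

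There is essentially no obstacle here beyond correctly orienting the name-translations in the definition of effective continuity — keeping track of which numbering is the source and which the target of the computable map $f^{-1}$ — and observing that both directions of the identity map are computable. One may also replace the appeal to Malcev's condition in the last paragraph by applying the preceding proposition to $\mathrm{id}_{X}:(X,\nu,\mathcal{T}_{SD},\nu_{SD})\to(X,\nu,\mathcal{T},\tau)$, which is computable, hence effectively continuous, yielding directly $\mathcal{T}\subseteq\mathcal{T}_{SD}$ and $\tau\le\nu_{SD}$.
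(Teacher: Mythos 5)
Your argument for the characterization is correct, and since the paper states both propositions without proof (the first is called ``straightforward'', the second ``obvious''), there is no official argument to diverge from; testing the hypothesis on $\mathrm{id}_{X}:(X,\nu,\mathcal{T},\tau)\rightarrow(X,\nu,\mathcal{T}_{SD},\nu_{SD})$ is surely the intended route. The bookkeeping is right: effective continuity of this identity gives $\mathcal{T}_{SD}\subseteq\mathcal{T}$ and $\nu_{SD}\le\tau$, while Malcev's condition gives $\tau\le\nu_{SD}$ and hence, since the image of $\tau$ generates $\mathcal{T}$ and consists of semi-decidable sets, $\mathcal{T}\subseteq\mathcal{T}_{SD}$. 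One implicit step worth making explicit is that the codomain $(X,\nu,\mathcal{T}_{SD},\nu_{SD})$ is itself a legitimate Type 1 computable topological space to which the hypothesised property may be applied; that is exactly the proposition immediately preceding these two in the paper.

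What you have not proved is the first of the two nested propositions: that every computable $f:(X,\nu,\mathcal{T}_{SD},\nu_{SD})\rightarrow(Y,\mu,\mathcal{T}_{2},\tau_{2})$ is effectively continuous. In your final paragraph you even invoke it (``the preceding proposition'') as a black box. It is routine, but it is half of the statement: given a $\tau_{2}$-name of an effective open $O$ of $Y$, Malcev's condition for $Y$ yields uniformly a $\mu_{SD}$-name of $O$; precomposing the corresponding semi-decision procedure with a realizer $F$ of $f$ gives a $\nu_{SD}$-name of $f^{-1}(O)$, which in the Ershov space is precisely a $\tau_{1}$-name. Classical continuity follows because preimage commutes with unions and $\mathcal{T}_{2}$ is generated by its effective opens, whose preimages are $\nu$-semi-decidable and hence lie in $\mathcal{T}_{SD}$. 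With that paragraph added, your write-up covers the full statement.
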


These two propositions show that there are two distinct kinds of continuity
results one often encounters: 
\begin{itemize}
\item Results that state that computable functions defined on a space $(X,\nu,\mathcal{T}_{1},\tau_{1})$
are always effectively continuous, with no restriction on their codomain
(other than it being a computable topological space). These are in
fact results that characterize the semi-decidable sets on a given
numbered set, showing that $(\mathcal{T}_{1},\tau_{1})=(\mathcal{T}_{SD},\tau_{SD})$
(more precisely that $\tau_{1}\equiv\tau_{SD}$ for the equivalence
of numberings). They can thus be expressed and proved inside the numbered
set $(X,\nu)$, with no reference to computable functions mapping
to other computable topological spaces. Among these are the Rice-Shapiro
Theorem and continuity results for Scott domains (see for instance
\cite{Spr98}). 
\item Results that have limitations on the codomain. For instance the theorems
of Kreisel-Lacombe-Schoenfield \cite{Kreisel1957}, Ceitin \cite{Ceitin1967}
and Moschovakis \cite{Moschovakis1964} are expressed only for function
whose codomain is a recursive metric space, and make actual use of
properties of the codomain. 
\end{itemize}
We are interested here in results of the second kind. 

\subsubsection{The computable reals }

We first define a numbering $c_{\mathbb{Q}}$ of the set of rationals.
The numbering $c_{\mathbb{Q}}$ is defined on $\mathbb{N}$. Given
a natural number $n$, seen as encoding a triple $n=\langle a,b,c\rangle$
via a pairing function, we put $c_{\mathbb{Q}}(n)=(-1)^{a}\frac{b}{c+1}$. 

We now define the Cauchy subnumbering $c_{\mathbb{R}}$ of $\mathbb{R}$.
Denote by $(\varphi_{0},\varphi_{1},\varphi_{2}...)$ a standard enumeration
of all partial recursive functions. 
\begin{defn}
The Cauchy subnumbering of $\mathbb{R}$ is defined as follows:
\[
\text{dom}(c_{\mathbb{R}})=\{i\in\mathbb{N},\,\exists x\in\mathbb{R},\,\forall n\in\mathbb{N},\,\left|c_{\mathbb{Q}}(\varphi_{i}(n))-x\right|<2^{-n}\};
\]
\[
\forall i\in\text{dom}(c_{\mathbb{R}}),\,c_{\mathbb{R}}(i)=\lim_{n\rightarrow\infty}(c_{\mathbb{Q}}(\varphi_{i}(n))).
\]

The real numbers in the image of $c_{\mathbb{R}}$ are called the
\emph{computable reals}. Let $\mathbb{R}_{c}$ be the set of computable
reals. 
\end{defn}

Now we equip $(\mathbb{R}_{c},c_{\mathbb{R}})$ with a computable
topology. 

Let $\mathcal{T}_{\mathbb{R}}$ be the Euclidian topology on $\mathbb{R}_{c}$. 

For $p,q\in\mathbb{R}_{c}$, we denote by $]p,q[_{c}=\{x\in\mathbb{R}_{c},p<x<q\}$.
For convenience, we do not require $p<q$, we simply have $]p,q[_{c}=\emptyset$
if this is not the case. 

As usual, $W_{i}=\text{dom}(\varphi_{i})$. Define a subnumbering
$\tau_{\mathbb{R}}$ of $\mathcal{T}_{\mathbb{R}}$ as follows: 
\[
\text{dom}(\tau_{\mathbb{R}})=\mathbb{N};
\]
\[
\forall i\in\mathbb{N},\,\tau_{\mathbb{R}}(i)=\bigcup_{\langle n,m\rangle\in W_{i}}]c_{\mathbb{Q}}(n),c_{\mathbb{Q}}(m)[.
\]

We do not check the following easy lemma.
\begin{lem}
The quadruple $(\mathbb{R}_{c},c_{\mathbb{R}},\mathcal{T}_{\mathbb{R}},\tau_{\mathbb{R}})$
is a Type 1 computable topological space. 
\end{lem}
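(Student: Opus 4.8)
The plan is to verify the four axioms of Definition \ref{def:MAIN DEF} in turn for the quadruple $(\mathbb{R}_{c},c_{\mathbb{R}},\mathcal{T}_{\mathbb{R}},\tau_{\mathbb{R}})$, since each one reduces to a well-known effectivity property of the standard enumeration of rational intervals. First I would check that the image of $\tau_{\mathbb{R}}$ generates $\mathcal{T}_{\mathbb{R}}$: every open subset of $\mathbb{R}_{c}$ is a union of rational intervals $]c_{\mathbb{Q}}(n),c_{\mathbb{Q}}(m)[\,\cap\,\mathbb{R}_{c}$, and each such single interval is $\tau_{\mathbb{R}}(i)$ for an $i$ with $W_{i}=\{\langle n,m\rangle\}$; finite unions, hence arbitrary unions, of these are again in the image of $\tau_{\mathbb{R}}$ by construction, so the image is already a basis. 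Axiom (2) is immediate: taking $W_{i}=\emptyset$ gives $\emptyset$, and taking $W_{i}$ to enumerate all pairs $\langle n,m\rangle$ with $c_{\mathbb{Q}}(n)<c_{\mathbb{Q}}(m)$ exhausting $\mathbb{R}$ — more simply, enumerating intervals $]{-k},k[$ for all $k$ — gives $\mathbb{R}_{c}=X$.

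Next I would establish Malcev's condition $\tau_{\mathbb{R}}\le (c_{\mathbb{R}})_{SD}$. The point is that, given $i$ and a $c_{\mathbb{R}}$-name $j$ of a real $x$ (so $\varphi_{j}$ produces a fast Cauchy sequence of rationals converging to $x$), one can semi-decide $x\in\tau_{\mathbb{R}}(i)$: dovetail over all $\langle n,m\rangle\in W_{i}$ and all stages $k$, and halt as soon as one finds a pair with $c_{\mathbb{Q}}(n)<c_{\mathbb{Q}}(\varphi_{j}(k))-2^{-k}$ and $c_{\mathbb{Q}}(\varphi_{j}(k))+2^{-k}<c_{\mathbb{Q}}(m)$, which certifies $x\in\,]c_{\mathbb{Q}}(n),c_{\mathbb{Q}}(m)[$. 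This halts iff $x$ lies in the open set $\tau_{\mathbb{R}}(i)$, because membership of $x$ in an open interval is witnessed by some sufficiently good rational approximation. Uniformity in $i$ and $j$ gives a single index as required by the definition of $(c_{\mathbb{R}})_{SD}$, using the identification of the subnumbering $c_{\mathbb{R}}$ with its associated multi-numbering.

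For axiom (4), computability of countable unions is essentially trivial for this numbering: given a $\tau_{\mathbb{R}}^{\mathrm{id}_{\mathbb{N}}}$-name of a sequence $(A_\ell)_{\ell}$, i.e. an index for $\ell\mapsto i_\ell$ with $A_\ell=\tau_{\mathbb{R}}(i_\ell)$, one computably produces an index $i$ with $W_{i}=\bigcup_{\ell}W_{i_\ell}$ (a standard c.e. operation), and then $\tau_{\mathbb{R}}(i)=\bigcup_\ell A_\ell$. Finite intersections require a short argument: given $i,i'$, one wants an index $i''$ with $\tau_{\mathbb{R}}(i'')=\tau_{\mathbb{R}}(i)\cap\tau_{\mathbb{R}}(i')$; since the intersection of two unions of rational intervals is the union of the pairwise intersections, and the intersection $]a,b[\,\cap\,]a',b'[\,=\,]\max(a,a'),\min(b,b')[$ is again a rational interval whose endpoints are computable from the data (with our convention that $]p,q[\,=\emptyset$ when $p\ge q$, absorbed harmlessly into the union), one sets $W_{i''}$ to enumerate, for each $\langle n,m\rangle\in W_{i}$ and $\langle n',m'\rangle\in W_{i'}$, a pair coding the endpoints $\max(c_{\mathbb{Q}}(n),c_{\mathbb{Q}}(n'))$ and $\min(c_{\mathbb{Q}}(m),c_{\mathbb{Q}}(m'))$ as rationals.

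I do not expect a genuine obstacle anywhere — the lemma is flagged in the text as "easy" and "not checked" — but the step demanding the most care is Malcev's condition, because it is the only one that genuinely uses the Cauchy structure of $c_{\mathbb{R}}$ rather than purely the combinatorics of c.e. sets, and one must be careful that the semi-decision procedure halts on \emph{exactly} the names of reals in the open set and that this is uniform in the $\tau_{\mathbb{R}}$-index. A minor subtlety worth a sentence is that $\tau_{\mathbb{R}}$ is total on $\mathbb{N}$ whereas $c_{\mathbb{R}}$ is partial, so all effectivity claims are understood relative to $\mathrm{dom}(c_{\mathbb{R}})$ as in Definition \ref{def:Comp for multi numberings}; and a final cosmetic point is that one should note $\tau_{\mathbb{R}}$ really does land inside $\mathcal{T}_{\mathbb{R}}$, which is clear since each $\tau_{\mathbb{R}}(i)$ is manifestly a union of (traces on $\mathbb{R}_{c}$ of) open intervals.
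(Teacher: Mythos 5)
The paper does not actually supply a proof of this lemma---it explicitly states ``We do not check the following easy lemma''---so there is nothing to compare against; your write-up fills in exactly the verification the author left to the reader. Your argument is correct and is the standard one: the four axioms of Definition \ref{def:MAIN DEF} are checked in turn, with the only substantive step being Malcev's condition, where your dovetailing procedure (halting once some approximation $c_{\mathbb{Q}}(\varphi_{j}(k))$ lies in $]c_{\mathbb{Q}}(n)+2^{-k},\,c_{\mathbb{Q}}(m)-2^{-k}[$ for a pair $\langle n,m\rangle\in W_{i}$) correctly halts on exactly the $c_{\mathbb{R}}$-names of points of $\tau_{\mathbb{R}}(i)$, uniformly in $i$, and your handling of unions and intersections via the c.e. index sets $W_{i}$ and the convention $]p,q[\,=\emptyset$ for $p\ge q$ is exactly what is needed.
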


An important theorem of Friedberg states: 
\begin{thm}
[Friedberg, \cite{Friedberg1958}]The pair $(\mathcal{T}_{\mathbb{R}},\tau_{\mathbb{R}})$
is not the Ershov topology on $(\mathbb{R}_{c},c_{\mathbb{R}})$:
there exists a $c_{\mathbb{R}}$-semi-decidable subset of $\mathbb{R}_{c}$
which is not open in $\mathcal{T}_{\mathbb{R}}$. 
\end{thm}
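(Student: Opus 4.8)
The plan is to construct explicitly a $c_{\mathbb{R}}$-semi-decidable subset $A$ of $\mathbb{R}_c$ that is not open in $\mathcal{T}_{\mathbb{R}}$, by diagonalizing against all potential ``open witnesses''. The key idea is that a semi-decidable set $A$ which happens to be open would have, around each of its points, a rational open interval contained in $A$; so if I can force $A$ to contain a point $x$ such that every rational interval around $x$ spills outside $A$, I am done. Concretely, I would take $A$ to be a semi-decidable set built so that $0 \in A$ but for every $n$ the interval $]-2^{-n}, 2^{-n}[_c$ is not contained in $A$; the obstacle is that I must do this while keeping $A$ semi-decidable, i.e. while only \emph{enumerating} membership, never certifying non-membership.

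First I would set up the enumeration machinery: fix the standard enumeration $(\varphi_i)$ and hence the numbering $W_i = \mathrm{dom}(\varphi_i)$, and recall from the definition of $\tau_{\mathbb{R}}$ that every effective open set is of the form $\bigcup_{\langle n,m\rangle \in W_i} ]c_{\mathbb{Q}}(n), c_{\mathbb{Q}}(m)[$. The real content, however, is a statement about $c_{\mathbb{R}}$-semi-decidable sets, which by Malcev's condition and the definition of $c_{\mathbb{R}}$ means: $A$ is $c_{\mathbb{R}}$-semi-decidable iff there is an algorithm that halts exactly on those $i \in \mathrm{dom}(c_{\mathbb{R}})$ with $c_{\mathbb{R}}(i) \in A$. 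So I would describe $A$ by giving such an algorithm, and I would make $A$ a union of small rational intervals chosen adaptively. The construction in stages: at stage $s$, having committed to a finite union $U_s$ of rational intervals, I watch the first $s$ machines for $s$ steps; whenever I see evidence that some candidate open set $O_i$ (the $i$-th effective open set) is ``trying'' to certify that a neighborhood of $0$ lies in $A$ — i.e. $O_i$ contains an interval around $0$ that is currently inside $U_s$ — I respond by \emph{choosing the next interval not to add near $0$ on the other side}, i.e. I keep $0$ in $A$ but arrange that the requirement ``$]-2^{-n},2^{-n}[_c \subseteq A$'' is defeated for the relevant $n$. The point is that adding intervals to $U_s$ is a monotone, enumerable operation, so the resulting $A = \bigcup_s U_s$ is automatically semi-decidable, while the diagonalization ensures no rational interval around $0$ can be certified inside.

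The main obstacle — and the place where Friedberg's original argument is genuinely clever — is the asymmetry between semi-decidability and openness: I may \emph{put} a point of $\mathbb{R}_c$ into $A$ by covering it with a rational interval, but I can never remove it, nor can I ever declare ``this point is outside $A$ forever''. So the diagonalization must be arranged so that keeping each neighborhood-of-$0$ requirement ``$]-2^{-n},2^{-n}[_c \not\subseteq A$'' satisfied does not require me to *exclude* any fixed point permanently, only to *never add* certain points; and simultaneously $0$ itself must genuinely land in $A$. The standard resolution is to keep $A$ built from a carefully thinning sequence of intervals accumulating at $0$ from one side while leaving a sequence of ``gaps'' accumulating at $0$ from the other side — each gap containing a computable real witnessing that a given candidate open interval around $0$ overshoots $A$. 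Once $A$ is constructed, the verification splits into three routine checks: (i) $A$ is $c_{\mathbb{R}}$-semi-decidable, immediate from the staged enumerative construction; (ii) $0 \in A$, by construction; (iii) $A$ is not open at $0$, because for the candidate enumeration we diagonalized against: for every $i$, if $O_i$ (the $i$-th effective open set) contained $0$ and were $\subseteq A$, then at some stage $O_i$ revealed an interval $]-\epsilon,\epsilon[$ around $0$, at which point our construction placed a computable real in $]-\epsilon,\epsilon[ \setminus A$, contradiction — and since, on $(\mathbb{R}_c, c_{\mathbb{R}}, \mathcal{T}_{\mathbb{R}}, \tau_{\mathbb{R}})$, the effective open sets generate $\mathcal{T}_{\mathbb{R}}$ and every $\mathcal{T}_{\mathbb{R}}$-open neighborhood of $0$ contains an effective open neighborhood of $0$, this shows $0$ is in $A$ but not in the $\mathcal{T}_{\mathbb{R}}$-interior of $A$. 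Hence $A \in \mathcal{T}_{SD}$-world but $A \notin \mathcal{T}_{\mathbb{R}}$, so $\tau_{\mathbb{R}} \not\equiv \tau_{SD}$, which is exactly the claim.
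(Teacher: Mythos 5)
The paper does not actually prove this statement (it is cited to Friedberg), so your proposal must stand on its own, and it has a genuine gap at precisely the step you dismiss as ``(i) \dots{} immediate from the staged enumerative construction''. Your set $A$ is a monotone union of enumerated rational open intervals together with the point $0$. If you drop $0$, the set is a union of open intervals and hence open, and nothing is proved; if you keep $0$, then the semi-decidability of $A$ is the entire difficulty, not a routine check. A semi-decision procedure for $A$ must halt on \emph{every} $c_{\mathbb{R}}$-name of $0$, and the only acceptance criterion your construction supplies is ``the rational approximations produced by the name place the point strictly inside one of the enumerated intervals'' --- a criterion depending only on the approximation stream, never on the index of the name. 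Such a stream-only procedure cannot accept $0$ without accepting a neighborhood of $0$: consider the name of $0$ whose approximations are all equal to $0$; if the procedure halts on it after reading $N$ approximations, it halts on every name beginning with $N$ zeros, and every computable $y$ with $|y|<2^{-N}$ has such a name, so $y\in A$ (the paper's definition of semi-decidability forces the procedure to then halt on \emph{all} names of $y$). Hence $]-2^{-N},2^{-N}[_c\subseteq A$, and your diagonalization requirements ``$]-2^{-n},2^{-n}[_c\not\subseteq A$'' are unsatisfiable within the framework you set up. Equivalently: you have implicitly identified ``semi-decidable'' with ``effectively open'', which is exactly the identification the theorem refutes.

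The missing idea --- the actual content of Friedberg's construction, and the ``Borel'' idea the paper's introduction discusses via Hoyrup and Rojas --- is that the acceptance algorithm may exploit the \emph{index} $i$ of the name it is given, not merely the output of $\varphi_{i}$. Schematically, on input a name $i$ of $x$, one accepts either when the approximations certify that $x$ lies in an explicitly enumerated safe region, or when they certify $|x|<2^{-g(i)}$ for a threshold computed from $i$ itself. The second clause guarantees that every name of $0$ is eventually accepted, while the points deliberately excluded from $A$ (reals accumulating at $0$ whose every name has large index relative to their size, in the spirit of Kolmogorov complexity) are never accepted, because for their names the index-dependent threshold is never reached before the stream rules them out. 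Without some use of the name itself in the acceptance decision, no construction of the kind you describe can separate semi-decidability from openness; that is the step you would need to add, and it changes the architecture of the whole argument.
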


\subsubsection{\label{subsec:A-converging-sequence-and-its-limit}A converging sequence
and its limit}

Let $\mathbb{N}^{+}$ be the set of non-decreasing sequences of $\{0,1\}^{\mathbb{N}}$. 

Let $(\varphi_{0},\varphi_{1},\varphi_{2},...)$ be a standard enumeration
of all partial recursive function. We define a numbering $c_{\mathbb{N}^{+}}$
of $\mathbb{N}^{+}$ by the following:
\[
\text{dom}(c_{\mathbb{N}^{+}})=\{i\in\mathbb{N},\,\varphi_{i}\in\{0,1\}^{\mathbb{N}},\,\forall n\in\mathbb{N},\,\varphi_{i}(n+1)\ge\varphi_{i}(n)\}
\]
\[
\forall i\in\text{dom}(c_{\mathbb{N}^{+}}),\,c_{\mathbb{N}^{+}}(i)=\varphi_{i}.
\]

Note that we can identify $\mathbb{N}^{+}$ with $\mathbb{N}\cup\{\infty\}$
via the identification 
\[
0^{n}1^{\infty}\leftrightarrow n;
\]
\[
0^{\infty}\leftrightarrow\infty.
\]
We then naturally extend the linear order of $\mathbb{N}$ to $\mathbb{N}^{+}$. 

We define a computable topology on $(\mathbb{N}^{+},c_{\mathbb{N}^{+}})$
as follows. Consider the basis: 
\[
\mathcal{B}=\{\{n\},n\in\mathbb{N}\}\cup\{\{k\in\mathbb{N}^{+},k\ge n\},\,n\in\mathbb{N}\},
\]
equipped with the numbering $\beta$ defined by $\beta(2n)=\{n\}$
and $\beta(2n+1)=\{k\in\mathbb{N}^{+},k\ge n\}$ for any $n\in\mathbb{N}$. 

Let $\mathcal{T}_{\mathbb{N}^{+}}$ be the classical topology generated
by $\mathcal{B}$. Let $W_{i}=\text{dom}(\varphi_{i})$ be the standard
numbering of c.e. subsets of $\mathbb{N}$. We then define a subnumbering
$\tau_{\mathbb{N}^{+}}$ of $\mathcal{T}_{\mathbb{N}^{+}}$ by:
\[
\text{dom}(\tau_{\mathbb{N}^{+}})=\mathbb{N},
\]
\[
\forall i\in\mathbb{N},\,\tau_{\mathbb{N}^{+}}(i)=\bigcup_{n\in W_{i}}\beta(n).
\]

The following fact is then easy to check:
\begin{lem}
The quadruple $(\mathbb{N}^{+},c_{\mathbb{N}^{+}},\mathcal{T}_{\mathbb{N}^{+}},\tau_{\mathbb{N}^{+}})$
is a Type 1 computable topological space. 
\end{lem}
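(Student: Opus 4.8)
The plan is to verify the four requirements of Definition \ref{def:MAIN DEF} for the quadruple $(\mathbb{N}^{+},c_{\mathbb{N}^{+}},\mathcal{T}_{\mathbb{N}^{+}},\tau_{\mathbb{N}^{+}})$. Requirements (1) and (2) are immediate: since a singleton $\{n\}\subseteq\mathbb{N}$ is c.e. with a computable index, each basic open set $\beta(n)$ equals $\tau_{\mathbb{N}^{+}}(i)$ for a $\tau_{\mathbb{N}^{+}}$-name $i$ with $W_{i}=\{n\}$, so the image of $\tau_{\mathbb{N}^{+}}$ contains the generating family $\mathcal{B}$; conversely every $\tau_{\mathbb{N}^{+}}(i)$ is a union of members of $\mathcal{B}$, hence open, which gives (1). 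For (2), taking $W_{i}=\emptyset$ gives $\tau_{\mathbb{N}^{+}}(i)=\emptyset$, and taking $W_{i}=\{1\}$ gives $\tau_{\mathbb{N}^{+}}(i)=\beta(1)=\{k\in\mathbb{N}^{+},\,k\ge 0\}=\mathbb{N}^{+}$.

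For Malcev's condition (3), i.e. $\tau_{\mathbb{N}^{+}}\le(c_{\mathbb{N}^{+}})_{SD}$, I would first observe that for a valid name $j\in\text{dom}(c_{\mathbb{N}^{+}})$ the sequence $\varphi_{j}$ is total and non-decreasing, and that for the point $x=c_{\mathbb{N}^{+}}(j)$ one has, for every $k\in\mathbb{N}$, the equivalences $x\ge k\iff\varphi_{j}(k-1)=0$ (read as vacuously true for $k=0$) and $x=k\iff\varphi_{j}(k)=1$ and ($k=0$ or $\varphi_{j}(k-1)=0$). Hence, uniformly in $k$ and $j$, membership of $x$ in $\beta(2k)=\{k\}$ and in $\beta(2k+1)=\{l\in\mathbb{N}^{+},\,l\ge k\}$ is decidable for valid $j$. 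Given a $\tau_{\mathbb{N}^{+}}$-name $i$ and a $c_{\mathbb{N}^{+}}$-name $j$, the procedure then dovetails the enumeration of $W_{i}$ and, for each $n$ that appears, tests whether $x\in\beta(n)$ by the above, halting as soon as one test succeeds. This halts exactly when $x\in\bigcup_{n\in W_{i}}\beta(n)=\tau_{\mathbb{N}^{+}}(i)$, and the construction is uniform in $i$, which is precisely $\tau_{\mathbb{N}^{+}}\le(c_{\mathbb{N}^{+}})_{SD}$.

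For (4), the key point is that $\mathcal{B}\cup\{\emptyset\}$ is closed under binary intersection, computably in the indices: $\beta(2k)\cap\beta(2l)$ is $\{k\}$ if $k=l$ and $\emptyset$ otherwise; $\beta(2k)\cap\beta(2l+1)$ is $\{k\}$ if $k\ge l$ and $\emptyset$ otherwise; and $\beta(2k+1)\cap\beta(2l+1)=\beta(2\max(k,l)+1)$. For computable unions (4a), a computable sequence of effective open sets is given by a computable map $k\mapsto i_{k}$ sending each $k$ to a $\tau_{\mathbb{N}^{+}}$-name of $A_{k}$; then $\bigcup_{k}W_{i_{k}}$ is c.e. with an index obtained uniformly, and $\bigcup_{k}A_{k}=\bigcup_{n\in\bigcup_{k}W_{i_{k}}}\beta(n)$, so the corresponding $\tau_{\mathbb{N}^{+}}$-name works. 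For finite intersections (4b), given names $i,i'$, dovetail over pairs $(n,n')\in W_{i}\times W_{i'}$ and, using the case analysis above, enumerate into a new c.e. set $W$ the index of the basic open set $\beta(n)\cap\beta(n')$ whenever it is nonempty; then $\bigcup_{m\in W}\beta(m)=\bigcup_{n\in W_{i},\,n'\in W_{i'}}(\beta(n)\cap\beta(n'))=\tau_{\mathbb{N}^{+}}(i)\cap\tau_{\mathbb{N}^{+}}(i')$, and a c.e. index for $W$ is computed uniformly from $i,i'$.

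No step here presents a genuine obstacle; the only one requiring attention is Malcev's condition (3), where one must unwind the definition of $c_{\mathbb{N}^{+}}$ — points are total non-decreasing $\{0,1\}$-valued sequences presented through $\varphi_{j}$ — and translate the order and equality relations on $\mathbb{N}^{+}=\mathbb{N}\cup\{\infty\}$ into (semi-)decidable predicates on $\varphi_{j}$ before dovetailing over $W_{i}$. The rest is bookkeeping with pairing functions and dovetailing, exactly as in the analogous verification for $(\mathbb{R}_{c},c_{\mathbb{R}},\mathcal{T}_{\mathbb{R}},\tau_{\mathbb{R}})$.
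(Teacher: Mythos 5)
Your verification is correct and complete: the paper leaves this lemma as an easy check without proof, and your direct verification of conditions (1)--(4) of Definition \ref{def:MAIN DEF} -- in particular the decidability of membership in the basic sets $\beta(2k)$ and $\beta(2k+1)$ from a valid $c_{\mathbb{N}^{+}}$-name, and the computable closure of $\mathcal{B}\cup\{\emptyset\}$ under binary intersection -- is exactly the routine argument the authors intend. No gaps.
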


Note that even in this simple case the Ershov topology of $(\mathbb{N}^{+},c_{\mathbb{N}^{+}})$
is different from $(\mathcal{T}_{\mathbb{N}^{+}},\tau_{\mathbb{N}^{+}})$
(and even: there is a semi-decidable set which does not belong to
$\mathcal{T}_{\mathbb{N}^{+}}$), see \cite{Hoyrup2016}. 

\subsection{\label{subsec:Sober-spaces}Sober spaces }

The definition of a \emph{computably sober space} can be traced back
to Paul Taylor in \cite{Taylor2002}, and to Mathias Schröder \cite{Schroeder2003},
who, in the context of Type 2 computable analysis, introduced it under
the name of ``computably admissible representation''. Here, we chose
to follow the vocabulary of Paul Taylor, which is now commonly used
in synthetic topology, see \cite{Bauer2023}. (The main reason for
this choice is that the expression ``admissible numbering'' already
has several meanings unrelated to the present investigation.)

Let $(X,\nu,\mathcal{T},\tau)$ be a Type 1 computable topological
space. 

We use the subnumbering $\tau$ to define a multi-numbering $\tau^{*}$
of $X$, by the following: 
\[
x\in\tau^{*}(i)\iff(\forall n\in\text{dom}(\tau),\,\varphi_{i}(n)\downarrow\iff x\in\tau(n))
\]
In other words, the $\tau^{*}$-name of a point is a program that
stops exactly on the $\tau$-names of open sets that contain $x$. 
\begin{prop}
In any Type 1 computable topological space $(X,\nu,\mathcal{T},\tau)$,
we have $\nu\le\tau^{*}$. 
\end{prop}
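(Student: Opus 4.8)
The plan is to unwind the definitions of $\nu \le \tau^*$ and of $\tau^*$, and then to produce, from a $\nu$-name of a point $x$, a $\tau^*$-name of $x$ — that is, a program that halts exactly on those $\tau$-names $n$ for which $x \in \tau(n)$. Recall that $\nu \le \tau^*$ means $\mathrm{id}_X$ is $(\nu, \tau^*)$-computable, i.e. there is a partial computable $F$ such that whenever $k$ is a $\nu$-name of $x$, then $F(k)$ is a $\tau^*$-name of $x$. So given $k$ with $\nu(k) = x$, I must compute (an index of) a program that, on input $n \in \mathrm{dom}(\tau)$, halts iff $x \in \tau(n)$.

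The key ingredient is Malcev's condition (item 3 of Definition \ref{def:MAIN DEF}): $\tau \le \nu_{SD}$, meaning the effective open sets are uniformly semi-decidable. Concretely this gives a partial computable function $g$ such that for each $n \in \mathrm{dom}(\tau)$, $g(n)$ is a $\nu_{SD}$-name of $\tau(n)$; and by definition of $\nu_{SD}$, $\varphi_{g(n)}$ is a program with the property that for all $i \in \mathrm{dom}(\nu)$, $\varphi_{g(n)}(i)\downarrow \iff \nu(i) \subseteq \tau(n)$ — and since $\nu$ is an honest numbering (not a multi-numbering, by the running convention of Definition \ref{def:MAIN DEF}), $\nu(i) \subseteq \tau(n)$ is just $\nu(i) \in \tau(n)$. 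So the desired program, on input $n$, simply computes $g(n)$ and then runs $\varphi_{g(n)}(k)$, halting iff that halts, which by the above happens iff $\nu(k) = x \in \tau(n)$. This is exactly the defining condition for the resulting index to be a $\tau^*$-name of $x$. By the s-m-n theorem the map $k \mapsto$ (index of this program) is computable, so it serves as the required $F$, establishing $\mathrm{id}_X$ is $(\nu, \tau^*)$-computable, i.e. $\nu \le \tau^*$.

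I would carry this out in the order: (i) recall the unfolded meaning of the conclusion; (ii) invoke Malcev's condition to extract the uniform semi-decidability witness $g$; (iii) describe the uniform program $P_k$ that on input $n$ runs $\varphi_{g(n)}(k)$, and verify its halting set is exactly $\{n \in \mathrm{dom}(\tau) : x \in \tau(n)\}$; (iv) apply s-m-n to get the computable index map and conclude. The only genuine subtlety — and the main thing to get right rather than a real obstacle — is the bookkeeping about domains: $F$ need only be defined and correct on $\mathrm{dom}(\nu)$, and $\varphi_{g(n)}$ is only guaranteed to behave correctly on $\mathrm{dom}(\nu)$, but since $k \in \mathrm{dom}(\nu)$ this is exactly what is needed; one should also note that for $n \notin \mathrm{dom}(\tau)$ the behaviour of $P_k(n)$ is irrelevant, since $\tau^*$-names only constrain behaviour on $\mathrm{dom}(\tau)$. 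No properties of the topology beyond Malcev's condition are used, so this is genuinely the "easy" direction.
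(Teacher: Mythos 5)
Your proof is correct and follows the same route as the paper: both arguments reduce the claim to Malcev's condition $\tau\le\nu_{SD}$, which yields a procedure that, given a $\nu$-name of a point and a $\tau$-name of an effective open set, halts iff the point lies in the set, and then curry (via s-m-n) the $\nu$-name to obtain the required $\tau^{*}$-name. The paper states this in one sentence; your version just makes the bookkeeping explicit.
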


\begin{proof}
This follows from the fact that sets in the image of $\tau$ are uniformly
$\nu$-semi-decidable: there is an algorithm that, given the $\nu$-name
of a point and the $\tau$-name of a set, strops if and only if the
point belongs to the set. 
\end{proof}
In particular, it is always the case that every element of $X$ has
a $\tau^{*}$-name, and $\tau^{*}$ is indeed a multi-numbering, and
not only a multi-subnumbering. 
\begin{defn}
A Type 1 computable topological space $(X,\nu,\mathcal{T},\tau)$
is \emph{computably sober} if $\tau^{*}\equiv\nu$. 
\end{defn}

Note that this equivalence in particular implies that $\tau^{*}$
should be an actual numbering, and not a multi-numbering. 
\begin{example}
The quadruple $(\mathbb{R}_{c},c_{\mathbb{R}},\mathcal{T}_{\mathbb{R}},\tau_{\mathbb{R}})$
is computably sober, as can easily be checked. This shows that a space
can be computably sober without being equipped with the Ershov topology. 
\end{example}

The following results are well known in the context of Type 2 computable
analysis \cite{Schroeder2003}. 
\begin{prop}
Let $f:(X,\nu,\mathcal{T}_{1},\tau_{1})\rightarrow(Y,\mu,\mathcal{T}_{2},\tau_{2})$
be an effectively continuous function between Type 1 computable topological
spaces. Suppose that $(Y,\mu,\mathcal{T}_{2},\tau_{2})$ is computably
sober. Then $f$ is computable. 
\end{prop}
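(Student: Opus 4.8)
The plan is to exhibit an explicit partial recursive function that transforms a $\nu$-name of a point $x\in X$ into a $\mu$-name of $f(x)$, using the effective continuity of $f$ together with computable soberness of $Y$. First I would recall the two ingredients. Since $f$ is effectively continuous, the map $f^{-1}:\mathcal{T}_2\to\mathcal{T}_1$ is $(\tau_2,\tau_1)$-computable; fix a partial recursive function $G$ realizing it, so that for every $\tau_2$-name $k$ of an open set $O_2\in\mathcal{T}_2$, $G(k)$ is a $\tau_1$-name of $f^{-1}(O_2)$. Since $Y$ is computably sober, $\tau_2^*\equiv\mu$; in particular $\mu\le\tau_2^*$, so there is a partial recursive function $H$ converting a $\tau_2^*$-name of a point $y\in Y$ into a $\mu$-name of $y$.

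Next I would build a $\tau_2^*$-name of $f(x)$ from a $\nu$-name of $x$. Recall that a $\tau_2^*$-name of a point $y$ is (a program for) the function that halts exactly on those $\tau_2$-names $k$ for which $y\in\tau_2(k)$. So, given a $\nu$-name $n$ of $x$, I want an algorithm that, on input $k$, halts iff $f(x)\in\tau_2(k)$, i.e. iff $x\in f^{-1}(\tau_2(k))$. This is where the previous step enters: compute $G(k)$, a $\tau_1$-name of $f^{-1}(\tau_2(k))$; by Malcev's condition (clause (3) of Definition \ref{def:MAIN DEF}), $\tau_1\le\nu_{SD}$, so effective open sets are uniformly $\nu$-semi-decidable, meaning there is an algorithm that, on input a $\tau_1$-name $j$ and a $\nu$-name $n$, halts iff $\nu(n)\in\tau_1(j)$. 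Feeding it $j=G(k)$ and our fixed $n$ yields exactly the desired semi-decision procedure. By the s-m-n theorem this construction is uniform in $n$: there is a total recursive function $e$ such that $\varphi_{e(n)}$ is precisely this procedure, so $e(n)$ is a $\tau_2^*$-name of $f(x)$ whenever $n$ is a $\nu$-name of $x$ — note $e(n)$ does not depend on the choice of name $n$ beyond it being a name of $x$, which is exactly what is needed for $\tau_2^*$ to act on it, and in fact we only need the membership behavior, which depends on $x=\nu(n)$ alone.

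Finally, composing, the partial recursive function $n\mapsto H(e(n))$ sends a $\nu$-name of $x$ to a $\mu$-name of $f(x)$, which is precisely the statement that $f$ is $(\nu,\mu)$-computable. I would also remark that classical continuity of $f$ is part of the hypothesis "effectively continuous" and is not otherwise used; only the computability of $f^{-1}$ on effective open sets is needed, plus the two structural facts $\tau_1\le\nu_{SD}$ (Malcev) on the domain and $\mu\le\tau_2^*$ (soberness) on the codomain.

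\textbf{Anticipated obstacle.} The only subtle point is the well-definedness of $e(n)$ as a $\tau_2^*$-name: one must check that the halting behavior of $\varphi_{e(n)}$ genuinely depends only on $x=\nu(n)$ and on the \emph{set} $\tau_2(k)$, not on the chosen names — but this is immediate since membership $f(\nu(n))\in\tau_2(k)$ is a property of the point and the set. Everything else is a routine composition of realizers via the s-m-n theorem, so I expect no real difficulty; the content is entirely in correctly chaining "$f^{-1}$ is computable" with "effective opens are semi-decidable" with "soberness lets us name points by their neighborhood filters."
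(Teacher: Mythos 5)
Your proposal is correct and follows essentially the same route as the paper's proof: build a $\tau_{2}^{*}$-name of $f(x)$ by semi-deciding $f(x)\in\tau_{2}(k)$ via the computed $\tau_{1}$-name of $f^{-1}(\tau_{2}(k))$ and Malcev's condition, then convert to a $\mu$-name using computable sobriety (your version just makes the s-m-n bookkeeping explicit). One small slip: the direction of the reduction you need for $H$ is $\tau_{2}^{*}\le\mu$, not $\mu\le\tau_{2}^{*}$ as written, but both follow from the hypothesis $\tau_{2}^{*}\equiv\mu$, so the argument stands.
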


\begin{proof}
Given the $\nu$-name of a point $x$, we compute a $\mu$-name of
$f(x)$. Because $(Y,\mu,\mathcal{T}_{2},\tau_{2})$ is computably
sober, it suffices to produce a program that, given the $\tau_{2}$-name
of an open set, stops if and only if $f(x)$ belongs to this set.
Such a program is as follows: given the $\tau_{2}$-name of $O$,
compute the $\tau_{1}$-name of $f^{-1}(O)$ (since $f$ is effectively
continuous), and then check whether $x\in f^{-1}(O)$. 
\end{proof}
\begin{prop}
Let $(Y,\mu,\mathcal{T}_{2},\tau_{2})$ be a Type 1 computable topological
space. Suppose that for every Type 1 computable topological space
$(X,\nu,\mathcal{T}_{1},\tau_{1})$ and every effectively continuous
function $f:(X,\nu,\mathcal{T}_{1},\tau_{1})\rightarrow(Y,\mu,\mathcal{T}_{2},\tau_{2})$,
$f$ is computable. 

Then $(Y,\mu,\mathcal{T}_{2},\tau_{2})$ is computably sober. 
\end{prop}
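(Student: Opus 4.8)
The plan is to prove the converse direction, a Hofmann--Mislove style characterization: if every effectively continuous function into $(Y,\mu,\mathcal{T}_2,\tau_2)$ from an arbitrary Type 1 computable topological space is automatically computable, then $\tau_2^{*}\equiv\mu$. Since the inequality $\mu\le\tau_2^{*}$ holds in every Type 1 computable topological space by the proposition proved just above, the only thing to establish is $\tau_2^{*}\le\mu$, i.e. that from a $\tau_2^{*}$-name of a point $y$ (a program halting exactly on the $\tau_2$-names of effective open sets containing $y$) one can compute a $\mu$-name of $y$.

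The idea is to apply the hypothesis to a cleverly chosen source space and a cleverly chosen function, namely the identity-like map whose computability is \emph{exactly} the statement $\tau_2^{*}\le\mu$. Concretely, I would equip the underlying set $Y$ with a second numbering, call it $\mu':=\tau_2^{*}$, keep the same topology $\mathcal{T}_2$, and take $\tau':=\tau_2$ as the numbering of effective open sets; one must first check that $(Y,\mu',\mathcal{T}_2,\tau')$ is a genuine Type 1 computable topological space. The only nonobvious axiom is Malcev's condition $\tau'\le\mu'_{SD}$: given a $\tau_2$-name of an effective open set $O$ and a $\mu'$-name of a point $y$ (a program $\varphi_i$), we semidecide $y\in O$ simply by running $\varphi_i$ on the given $\tau_2$-name and halting when it does; this works precisely because of how $\tau_2^{*}$ is defined. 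The union/intersection computability axioms and the generation axiom are inherited verbatim from $(Y,\mu,\mathcal{T}_2,\tau_2)$ since neither $\mathcal{T}_2$ nor $\tau_2$ has changed. Then consider $\mathrm{id}_Y:(Y,\mu',\mathcal{T}_2,\tau')\to(Y,\mu,\mathcal{T}_2,\tau_2)$. This map is effectively continuous: it is classically continuous (it's the identity on the same topology), and $\mathrm{id}_Y^{-1}=\mathrm{id}$ on $\mathcal{T}_2$ is trivially $(\tau_2,\tau_2)$-computable. By hypothesis, $\mathrm{id}_Y$ is then $(\mu',\mu)$-computable, which is by definition the statement $\tau_2^{*}=\mu'\ge\mu$, i.e.\ $\tau_2^{*}\le\mu$. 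Combined with $\mu\le\tau_2^{*}$ this gives $\tau_2^{*}\equiv\mu$, so $(Y,\mu,\mathcal{T}_2,\tau_2)$ is computably sober.

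The main obstacle is verifying that $(Y,\mu',\mathcal{T}_2,\tau')$ really satisfies \emph{all} of Definition \ref{def:MAIN DEF}, in particular checking that $\tau'\le\mu'_{SD}$ rigorously from the definition of $\tau_2^{*}$ and that the numbering $\mu'=\tau_2^{*}$ being a \emph{multi}-numbering rather than a numbering is not a problem --- here one uses that the excerpt's more general framework allows $\nu$ to be a multi-numbering, or alternatively one notes that this subtlety is harmless because all we need downstream is the computability of $\mathrm{id}_Y$, and Definition \ref{def:Comp for multi numberings} of computability is stated for multi-subnumberings. A secondary point to be careful about: one should double-check that the hypothesis as stated (``for every computable topological space $(X,\nu,\mathcal{T}_1,\tau_1)$ and every effectively continuous $f$, $f$ is computable'') is being applied to a legitimate instance, which it is once the space-axioms check out. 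Everything else is a direct unwinding of definitions, so I expect this to be a short proof once the space $(Y,\tau_2^{*},\mathcal{T}_2,\tau_2)$ is seen to be admissible.

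\begin{proof}
By the proposition above, $\mu\le\tau^{*}$ holds in $(Y,\mu,\mathcal{T}_{2},\tau_{2})$, so it suffices to prove $\tau^{*}\le\mu$. Equip the set $Y$ with the multi-numbering $\tau^{*}$ and consider the quadruple $(Y,\tau^{*},\mathcal{T}_{2},\tau_{2})$. Its generation axiom, the presence of $\emptyset$ and $Y$, and the computability of unions and finite intersections are inherited directly from $(Y,\mu,\mathcal{T}_{2},\tau_{2})$, since none of these conditions involves the point-numbering. For Malcev's condition $\tau_{2}\le(\tau^{*})_{SD}$: given the $\tau_{2}$-name $n$ of an effective open set $O$ and the $\tau^{*}$-name $i$ of a point $y$ (that is, $\varphi_{i}$ halts exactly on the $\tau_{2}$-names of effective open sets containing $y$), run $\varphi_{i}(n)$ and halt if and only if it halts; this semidecides $y\in O$, uniformly in $n$ and $i$, by the definition of $\tau^{*}$. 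Hence $(Y,\tau^{*},\mathcal{T}_{2},\tau_{2})$ is a Type 1 computable topological space (in the slightly more general sense allowing a multi-numbering of points, which is harmless for what follows).

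Now consider the function $\mathrm{id}_{Y}:(Y,\tau^{*},\mathcal{T}_{2},\tau_{2})\rightarrow(Y,\mu,\mathcal{T}_{2},\tau_{2})$. It is classically continuous, being the identity on the topology $\mathcal{T}_{2}$, and $\mathrm{id}_{Y}^{-1}:\mathcal{T}_{2}\rightarrow\mathcal{T}_{2}$ is the identity map, which is trivially $(\tau_{2},\tau_{2})$-computable; so $\mathrm{id}_{Y}$ is effectively continuous. By hypothesis, $\mathrm{id}_{Y}$ is therefore $(\tau^{*},\mu)$-computable, that is, $\mathrm{id}_{Y}:Y\rightarrow Y$ is $(\tau^{*},\mu)$-computable, which is exactly the statement $\tau^{*}\le\mu$. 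Combining with $\mu\le\tau^{*}$ gives $\tau^{*}\equiv\mu$, so $(Y,\mu,\mathcal{T}_{2},\tau_{2})$ is computably sober.
\end{proof}
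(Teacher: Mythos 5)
Your second step (equipping $Y$ with $\tau_2^{*}$ and applying the hypothesis to $\mathrm{id}_Y$) is exactly the closing move of the paper's proof, and your verification of Malcev's condition for $(Y,\tau_2^{*},\mathcal{T}_2,\tau_2)$ is correct. But there is a genuine gap before that: the hypothesis of the proposition quantifies only over Type 1 computable topological spaces in the sense of Definition \ref{def:MAIN DEF}, where the point-numbering is required to be an actual (single-valued) numbering; the paper explicitly declines to work with the more general multi-numbering variant. So you may not apply the hypothesis to $(Y,\tau_2^{*},\mathcal{T}_2,\tau_2)$ until you have shown that $\tau_2^{*}$ \emph{is} a numbering, i.e.\ that no two distinct points of $Y$ belong to exactly the same effective open sets. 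This is not a harmless technicality: if some index $i$ had $\tau_2^{*}(i)\supseteq\{x,y\}$ with $x\neq y$, then since $\mu$ is single-valued no $F(i)$ could satisfy $\mathrm{id}_Y(\tau_2^{*}(i))\subseteq\mu(F(i))$, so the conclusion $\tau_2^{*}\le\mu$ you are trying to extract would itself be false for that instance. Ruling this out is the substantive half of the argument.

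The paper closes this gap with a separate construction: given two points $x,y\in Y$ lying in the same effective open sets, it builds the two-point space with the indiscrete topology and an \emph{arbitrary} numbering $\nu$ of $\{0,1\}$, observes that $g(0)=x$, $g(1)=y$ is automatically effectively continuous, hence computable by hypothesis, and derives a contradiction because $g$ would reduce the equality problem of $(\{0,1\},\nu)$ --- which can be made arbitrarily hard --- to that of $(Y,\mu)$. You need this step (or some substitute for it) before your identity-map argument becomes a legitimate application of the hypothesis.
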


\begin{proof}
We can equip $Y$ with the multi-numbering $\tau_{2}^{*}$.

We first show that $\tau_{2}^{*}$ must be a numbering. 

Suppose that $x$ and $y$ are two points of $Y$ that belong to exactly
the same effectively open sets. Let $\nu$ be any numbering of $\{0,1\}$.
Let $\mathcal{I}=\{\emptyset,\{0,1\}\}$ be the indiscrete topology
on $\{0,1\}$, equipped with the numbering $i$ defined by $i(p)=\emptyset$
if $\varphi_{p}(p)\uparrow$ and $i(p)=\{0,1\}$ otherwise. We leave
it to the reader to check that computable unions and finite intersections
are computable for $i$. 

Note that whatever $\nu$, $(\{0,1\},\nu,\mathcal{I},i)$ is a Type
1 computable topological space, because the empty set and $\{0,1\}$
are $\nu$-semi-decidable. 

The map $g:(\{0,1\},\nu,\mathcal{I},i)\rightarrow(Y,\mu,\mathcal{T}_{2},\tau_{2})$
defined by $g(0)=x$ and $g(1)=y$ is thus effectively continuous.
And thus it should be computable. 

But then, $g$ is a reduction from the equality problem in $(\{0,1\},\nu)$
to the equality problem in $\mu$. But since $\nu$ is an arbitrary
numbering of $\{0,1\}$, the equality problem for $\nu$ can be arbitrarily
hard, and in particular it can be strictly harder than that of $(Y,\mu)$.
Thus $\tau_{2}^{*}$ must be a numbering. 

Now that we know that $\tau_{2}^{*}$ is a numbering, it is immediate
to see that $(Y,\tau_{2}^{*},\mathcal{T}_{2},\tau_{2})$ is a Type
1 computable topological space. The identity $\text{id}_{Y}:(Y,\tau_{2}^{*},\mathcal{T}_{2},\tau_{2})\rightarrow(Y,\nu,\mathcal{T}_{2},\tau_{2})$
is effectively continuous, trivially. Thus it is computable, and thus
$\tau_{2}^{*}\le\nu$.
\end{proof}

\subsection{Recursive metric spaces }

The following definition is that of Moschovakis in \cite{Moschovakis1964}. 
\begin{defn}
A \emph{recursive metric space} (RMS) is a triple $(X,\nu,d)$, where
$(X,d)$ is a countable metric space, $\nu$ is a numbering of $X$,
such that $d$ is a $(\nu\times\nu,c_{\mathbb{R}})$-computable function. 
\begin{defn}
[\cite{Spr98},\cite{Kushner1984}]\label{def:Algo passage limit }
A RMS $(X,\nu,d)$\emph{ admits an algorithm of passage to the limit}
if there is a program that on input the $\nu^{\text{id}_{\mathbb{N}}}$-name
of a computable sequence that converges to a computable point and
the code for a function which bounds the convergence speed of this
sequence produces a $\nu$-name for its limit. 
\end{defn}

\end{defn}

It is well known that the above condition is equivalent to asking
that there exist an algorithm that on input a sequence that converges
faster than $n\mapsto2^{-n}$ produces its limit. 

This condition is called ``condition (A)'' in \cite{Moschovakis1964}. 

Note that defining the computable topology of a recursive metric space
is less trivial than it might appear at first glance. In \cite{RAUZY2023},
basing ourselves on the work of Dieter Spreen \cite{Spr98}, we give
the following definition, which uses the numbering $c_{\nearrow}$
of \emph{left computable reals }(i.e. the $c_{\nearrow}$-name of
a real $x$ is the code of a Turing Machine that produces an increasing
sequence of rationals that converges to $x$):
\begin{defn}
Let $(X,\nu,d)$ be a RMS. Let $\mathcal{T}$ be the classical metric
topology on $X$. We define a subnumbering $\tau$ of $\mathcal{T}$
as follows: 
\[
\tau(\langle n,m\rangle)=O
\]
 if and only if:
\begin{itemize}
\item $O$ is a $\nu$-semi-decidable set, and $n$ is a $\nu_{SD}$-name
of $O$;
\item $m$ is the code of a $(\nu,c_{\nearrow})$-computable function $F$
which satisfies the following conditions:
\[
\forall x\in O,\,B(x,F(x))\subseteq O;
\]
\[
\forall x\in O,\,\exists r>0,\,\forall y\in B(x,r),\,F(y)>r.
\]
\end{itemize}
See \cite{RAUZY2023} for more details. Note that we prove in \cite{RAUZY2023}
that when $(X,\nu,d)$ is computably separable, the above definition
amounts to requiring that open sets in the image of $\tau$ be computable
unions of open balls. 
\end{defn}

\section{\label{part: On the continuity of computable functions}Definitions
and results}

\subsection{\label{subsec:notions-of-Eff-Closure}Different notions of effective
closure }

Let $(X,\nu,\mathcal{T},\tau)$ be a computable topological space. 

We introduce here three notions of effective closure. 
\begin{defn}
If $A$ is a subset of $X$, say that $x$ is \emph{effectively adherent}
to $A$ if there is a program that, on input the $\tau$-name of an
open set $O$ that contains $x$, produces the $\nu$-name of a point
$y$ in $O\cap A$. 

The \emph{effective closure} of $A$ is the set of all points that
are effectively adherent to $A$. It is denoted $\overline{A}^{+}$.
\end{defn}

Associated to this definition is a multi-numbering $\mu_{\overline{A}^{+}}:\subseteq\mathbb{N}\rightrightarrows\overline{A}^{+}$,
defined as follows: the name of a point $x$ for $\mu_{\overline{A}^{+}}$
is the code for the algorithm that, given the $\tau$-name of an open
set which contains $x$, produces the $\nu$-name of a point $Y$
in $O\cap A$. (This multi-numbering does not have to be a numbering,
even when $X$ satisfies separation axioms, like being Hausdorff.)

Remark that when the computable topology $(\mathcal{T},\tau)$ was
defined thanks to a basis, one can work with basic open sets instead
of open sets in the definition above. 

And we define also the sequential closure.
\begin{defn}
If $A$ is a subset of $X$, the \emph{effective sequential closure
}of $A$ is the set of all limits of converging computable sequences
of elements of $A$. 

We denote it by $\overline{A}^{+\text{seq}}$. 
\end{defn}

Associated to this definition is a multi-numbering $\mu_{\overline{A}^{+\text{seq}}}$,
a point of $\overline{A}^{+\text{seq}}$ is described by the code
of a $\nu$-computable sequence that converges to it. (This multi-numbering
is a numbering as soon as $X$ is Hausdorff.)

Finally, we defined the normed effective sequential closure. 

Let $(u_{n})_{n\in\mathbb{N}}$ be a computable sequence in $(X,\nu,\mathcal{T},\tau)$
that converges to a point $x$. We say that $(u_{n})_{n\in\mathbb{N}}$
is \emph{computably normed} if there is a program that, on input the
$\tau$-name of an open set $O$ that contains $x$, will produce
$N\in\mathbb{N}$ such that 
\[
\forall n\ge N,\,u_{n}\in O.
\]

\begin{defn}
If $A$ is a subset of $X$, the \emph{normed sequential closure }of
$A$ is the set of all limits of converging computably normed computable
sequences of elements of $A$. We denote it by $\overline{A}^{+\text{seq},N}$. 
\end{defn}

Associated to this definition is a multi-numbering $\mu_{\overline{A}^{+\text{seq,}N}}$,
a point of $\overline{A}^{+\text{seq},N}$ is described by the code
of a $\nu$-computable sequence that converges to it, together with
the code of an algorithm that shows that this sequence is normed.
(This multi-numbering is a numbering as soon as $X$ is Hausdorff.)

The following proposition is crucial in understanding the use of normed
sequences. 
\begin{prop}
\label{prop:Comp Seq Normed gives map to Sober space }A computable
sequence $(u_{n})_{n\in\mathbb{N}}$ converging to a point $x$ in
$(X,\nu,\mathcal{T},\tau)$ is computably normed if and only if the
map $f:(\mathbb{N}^{+},c_{\mathbb{N}^{+}},\mathcal{T}_{\mathbb{N}^{+}},\tau_{\mathbb{N}^{+}})\rightarrow(X,\nu,\mathcal{T},\tau)$
given by $f(n)=u_{n}$ and $f(\infty)=x$ is effectively continuous. 
\end{prop}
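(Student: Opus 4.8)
The plan is to prove the equivalence by unwinding both sides to concrete statements about programs and then exhibiting the translations between them. Recall that $(\mathbb{N}^{+},c_{\mathbb{N}^{+}},\mathcal{T}_{\mathbb{N}^{+}},\tau_{\mathbb{N}^{+}})$ has the basis $\mathcal{B}=\{\{n\}:n\in\mathbb{N}\}\cup\{\{k\in\mathbb{N}^{+}:k\ge n\}:n\in\mathbb{N}\}$, and that the only basic open sets containing $\infty$ are the tails $U_{n}=\{k\in\mathbb{N}^{+}:k\ge n\}$. The map $f$ with $f(n)=u_{n}$, $f(\infty)=x$ is automatically (classically) continuous precisely because $(u_n)$ converges to $x$, so the content of effective continuity is the computability of $f^{-1}:\mathcal{T}_{\mathbb{N}^{+}}\to\mathcal{T}$ with respect to $(\tau,\tau_{\mathbb{N}^{+}})$, i.e. a program that, given a $\tau$-name of an open set $O\subseteq X$, outputs a $\tau_{\mathbb{N}^{+}}$-name of $f^{-1}(O)=\{n\in\mathbb{N}:u_n\in O\}\cup(\{\infty\}\text{ if }x\in O)$. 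First I would note that the set $\{n\in\mathbb{N}:u_n\in O\}$ is uniformly semi-decidable in a $\tau$-name of $O$ (since the $u_n$ form a computable sequence and effective open sets are uniformly semi-decidable via $\tau\le\nu_{SD}$), so as far as the ``finite'' part of $f^{-1}(O)$ goes there is never any obstruction. Everything hinges on what happens at $\infty$.

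For the forward direction, suppose $(u_n)$ is computably normed, witnessed by a program $P$ that on a $\tau$-name of an open $O\ni x$ outputs $N$ with $u_n\in O$ for all $n\ge N$. Given a $\tau$-name of an arbitrary effective open $O$, I would run $P$ on it in parallel with the semi-decision procedure for ``$x\in O$'' (available since $\nu\le\tau^{*}$, or directly since $O$ is $\nu$-semi-decidable and we have a $\nu$-name of $x$ through a name of $u_0$... more carefully, $x$ need not have a handy $\nu$-name, so I would instead use: $x\in O$ iff $P$ halts on the name of $O$, using that $O$ open and $x=\lim u_n$ forces some tail to lie in $O$ exactly when $x\in O$). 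When this search succeeds with output $N$, I know $x\in O$ and $u_n\in O$ for $n\ge N$, so $f^{-1}(O)\supseteq U_N\cup\{n:u_n\in O\}$, and conversely $f^{-1}(O)=\{n:u_n\in O\}\cup\{\infty\}=\{n:u_n\in O\}\cup U_N$ up to finitely many points which I also enumerate; assembling this as a computable union of basic sets from $\mathcal{B}$ gives the desired $\tau_{\mathbb{N}^{+}}$-name. If $x\notin O$ then $P$ never halts and $f^{-1}(O)=\{n:u_n\in O\}$, a computable union of singletons, again yielding a $\tau_{\mathbb{N}^{+}}$-name. Dovetailing the two cases produces $f^{-1}$ as a $(\tau,\tau_{\mathbb{N}^{+}})$-computable function, so $f$ is effectively continuous.

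For the converse, suppose $f$ is effectively continuous, so we have a program $Q$ converting a $\tau$-name of $O$ into a $\tau_{\mathbb{N}^{+}}$-name of $f^{-1}(O)$. Given a $\tau$-name of an open $O\ni x$, I run $Q$ to get a $\tau_{\mathbb{N}^{+}}$-name of $f^{-1}(O)$; since $x\in O$ we have $\infty\in f^{-1}(O)$, and because $f^{-1}(O)$ is then an \emph{effective} open set containing $\infty$, its $\tau_{\mathbb{N}^{+}}$-name enumerates, among its constituent basic sets, at least one tail $U_N$ (the only basic neighborhoods of $\infty$). So I search through the enumeration given by the $\tau_{\mathbb{N}^{+}}$-name until I find some $U_N\subseteq f^{-1}(O)$, and output that $N$; then $u_n=f(n)\in O$ for all $n\ge N$, which is exactly the normedness witness. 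This shows $(u_n)$ is computably normed.

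The main obstacle, and the step I would be most careful about, is the handling of $\infty$ and the fact that the multi-numbering issues ($x$ may lack a direct $\nu$-name, the $\tau^{*}$-style detection of ``$x\in O$'') must be routed correctly: one must argue that ``$x\in O$'' is semi-decidable from a $\tau$-name of $O$ in a way that does not secretly already assume normedness — the clean way is to observe it is equivalent to ``$f^{-1}(O)$ contains some tail'', but in the forward direction we do not yet have $f$, so there one uses the normedness program $P$ directly as the semi-decider, and only in the converse uses $Q$. Making this asymmetry explicit, and checking that ``find a tail among the enumerated basic pieces of an effective open set containing $\infty$'' genuinely terminates (it does, because any open set containing $\infty$ must contain some tail, and the $\tau_{\mathbb{N}^{+}}$-name enumerates a generating family of basic sets whose union is that open set), is where the real verification lies; the rest is the routine bookkeeping of assembling computable unions of elements of $\mathcal{B}$.
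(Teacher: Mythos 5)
Your proof follows essentially the same route as the paper's: in one direction you search the enumerated basic pieces of the effective open set $f^{-1}(O)$ for a tail $\{k\in\mathbb{N}^{+},\,k\ge N\}$ (the only basic neighborhoods of $\infty$), and in the other you assemble $f^{-1}(O)$ as a computable union of singletons together with a tail obtained from the normedness witness. One caution: your claim that the normedness program $P$ ``never halts'' when $x\notin O$ is unjustified (its behaviour on such inputs is unspecified), but the fix you also sketch is the right one --- hard-code a $\nu$-name of $x$ (which exists since $\nu$ is surjective onto $X$, and may be fixed non-uniformly because the statement is non-uniform), semi-decide $x\in O$ via $\tau\le\nu_{SD}$, and only then trust $P$'s output --- and with that your dovetailed treatment of the case $x\notin O$ is in fact more careful than the paper's own proof of that direction, which tacitly assumes $x\in O$.
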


\begin{proof}
Suppose that $f$ is effectively continuous. 

Given the $\tau$-name of a set $O$ that contains $x$, compute a
$\tau_{\mathbb{N}^{+}}$-name of its preimage in $\mathbb{N}^{+}$.
By definition of the numbering $\tau_{\mathbb{N}^{+}}$, this open
set can be written as a computable union of basic sets which are either
singletons in $\mathbb{N}$, or sets of the form $\{k\in\mathbb{N}^{+},\,k\ge N\}$,
for $N\in\mathbb{N}$. Since $f^{-1}(O)$ contains $\infty$, it must
contain a basic set of the form $\{k\in\mathbb{N}^{+},\,k\ge N\}$,
and such a set can be found by exhaustive search. The computed $N$
will precisely be an index for which $\forall n\ge N,\,u_{n}\in O$.
This shows that in the sequence $(u_{n})$ is normed. 

Suppose now that $(u_{n})_{n\in\mathbb{N}}$ is normed. Let $O$ be
an open set of $X$ given by a $\tau$-name. 

The preimage $f^{-1}(O)\cap\mathbb{N}$ is a semi-decidable subset
of $\mathbb{N}$, which can thus be written as a computable union
of singletons, thus giving a $\tau_{\mathbb{N}^{+}}$-name of it.
Because $(u_{n})_{n\in\mathbb{N}}$ is normed, we can compute $N$
such that $\forall n\ge N,\,u_{n}\in O$. This implies that the basic
set $\{k\in\mathbb{N}^{+},\,k\ge N\}$ is a subset of $f^{-1}(O)$.
But note that 
\[
f^{-1}(O)=(f^{-1}(O)\cap\mathbb{N})\cup\{k\in\mathbb{N}^{+},\,k\ge N\}.
\]
Thus, as it is possible to compute $\tau_{\mathbb{N}^{+}}$-names
for both sets of the right hand side in the above, it is also possible
to obtain the $\tau_{\mathbb{N}^{+}}$-name of their union.
\end{proof}
\begin{rem}
In a recursive metric space, the normed sequential closure can be
defined in terms of the metric: $\overline{A}^{+\text{seq},N}$ is
the set of all limits of computable sequences of points of $A$ that
converge at a computable speed. 

The notion of normed sequence that we propose here is only one of
several possible notions that generalize sequences in metric spaces
that converge computably fast. 

In particular, a distinct generalization was considered by Spreen
in \cite{Spr98} using bases that admit a strong inclusion relation.
See Definition 2.7 and Definition 2.10 of \cite{Spr98}, and Lemma
3.2 where it is shown that the notion, which is defined for general
topological spaces, is indeed an extension of the notion for metric
spaces. 

We could introduce notions of effective discontinuities, and identification
conditions, and so on, with respect to this notion of effective closure.
However, we leave this to the reader, since these are really syntactic
changes that would not involve any new reasonings. 
\end{rem}

Note that Type 1 computable topological spaces are always countably
based, and thus their topology is determined by converging sequences.
However, not every point needs to have a computably enumerable basis
of neighborhood (see Section \ref{subsec:Multi-numbering-of-neighborhood bases }).
Because of this, the effective closure does not have to correspond
to the effective sequential closure, and thus the computable topology
does not have to be determined by computable sequences that converge. 

The following easy propositions are left to the reader: 
\begin{prop}
The following inclusions always hold:
\[
A\subseteq\overline{A}^{+\text{seq,}N}\subseteq\overline{A}^{+\text{seq}}\subseteq\overline{A}^{+}\subseteq\overline{A}.
\]
\begin{prop}
Each inclusion given in the above proposition can be strict. 
\end{prop}

\end{prop}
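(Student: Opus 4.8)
Proof plan for the two "easy" propositions (the chain of inclusions $A\subseteq\overline{A}^{+\text{seq},N}\subseteq\overline{A}^{+\text{seq}}\subseteq\overline{A}^{+}\subseteq\overline{A}$, and the strictness of each inclusion).

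The plan is to verify the four inclusions one at a time, in each case unwinding the definition on the left and producing the witnessing datum required on the right. For $A\subseteq\overline{A}^{+\text{seq},N}$: given $a\in A$, take the constant sequence $u_n=a$; it is computable (from a $\nu$-name of $a$), it converges to $a$, and it is trivially computably normed since for \emph{any} open $O\ni a$ one may output $N=0$. For $\overline{A}^{+\text{seq},N}\subseteq\overline{A}^{+\text{seq}}$: the inclusion is immediate, since a computably normed convergent computable sequence is in particular a convergent computable sequence; one simply forgets the norming algorithm. For $\overline{A}^{+\text{seq}}\subseteq\overline{A}^{+}$: suppose $x=\lim u_n$ with $(u_n)$ a $\nu$-computable sequence of points of $A$. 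We must produce, from a $\tau$-name of an open set $O\ni x$, a $\nu$-name of a point of $O\cap A$. Since $O$ is an effective open set it is uniformly $\nu$-semi-decidable (Malcev's condition, $\tau\le\nu_{SD}$), so we can search in parallel over $n$ for the first $n$ with $u_n\in O$; convergence guarantees such $n$ exists, and $u_n\in O\cap A$, and we can output its $\nu$-name. Finally $\overline{A}^{+}\subseteq\overline{A}$: if $x$ is effectively adherent to $A$ then for every open $O\ni x$ — in particular every \emph{effective} open $O\ni x$, and these generate the topology, so it suffices to test these — there is a point of $O\cap A$, hence $O\cap A\neq\emptyset$; since basic open sets around $x$ suffice to witness $x\in\overline{A}$, we conclude $x\in\overline{A}$. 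I do not expect any real obstacle here; the only point requiring a little care is that "effectively adherent" is phrased in terms of effective open sets, so for the last inclusion one must recall that the image of $\tau$ generates $\mathcal{T}$, so a point lies in the closure iff every \emph{basic} open neighbourhood meets $A$.

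For strictness, the plan is to exhibit, for each of the four inclusions, a concrete space and a set separating the two closures; all the needed spaces are already built in the excerpt. For $\overline{A}^{+}\subsetneq\overline{A}$: work in $(\mathbb{R}_c,c_{\mathbb{R}},\mathcal{T}_{\mathbb{R}},\tau_{\mathbb{R}})$ and let $A$ be (the complement, or an appropriate set derived from) Friedberg's semi-decidable-but-not-open set; more simply, take $A=\mathbb{R}_c\setminus\{0\}$: then $0\in\overline{A}$, but $0\notin\overline{A}^{+}$ because an algorithm effectively adhering $0$ to $A$ would, fed tighter and tighter rational intervals around $0$, produce computable reals converging to $0$ in a way that (via the Markov-style argument, which is exactly Theorem 4.2.2 of \cite{Markov1963}) contradicts the undecidability of "is this computable real zero?". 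For $\overline{A}^{+\text{seq}}\subsetneq\overline{A}^{+}$: one needs a space where some point is effectively adherent to a set via the topology but not reachable by any computable sequence; here the distinction genuinely requires a point without a computably enumerable neighbourhood basis, and the natural candidate is again $(\mathbb{R}_c,c_{\mathbb{R}},\mathcal{T}_{\mathbb{R}},\tau_{\mathbb{R}})$ — but actually in $\mathbb{R}_c$ the euclidean topology \emph{is} sequential with computable witnesses, so one should instead invoke a space built to have a "bad" point, e.g.\ a modification of $\mathbb{N}^{+}$ or the Sierpi\'nski-type examples; the cleanest is to take the Ershov topology $(\mathbb{R}_c,c_{\mathbb{R}},\mathcal{T}_{SD},(c_{\mathbb{R}})_{SD})$ on $\mathbb{R}_c$ together with Friedberg's set, where effective adherence through semi-decidable neighbourhoods can hold while no computable sequence converges. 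For $\overline{A}^{+\text{seq},N}\subsetneq\overline{A}^{+\text{seq}}$: take $(\mathbb{N}^{+},c_{\mathbb{N}^{+}},\mathcal{T}_{\mathbb{N}^{+}},\tau_{\mathbb{N}^{+}})$ with $A=\mathbb{N}$; the point $\infty$ is the limit of the computable sequence $u_n=n$, so $\infty\in\overline{A}^{+\text{seq}}$, but $\infty\notin\overline{A}^{+\text{seq},N}$ since by Proposition~\ref{prop:Comp Seq Normed gives map to Sober space} a normed computable sequence converging to $\infty$ would make $n\mapsto u_n$, $\infty\mapsto\infty$ effectively continuous, hence (as the codomain here is essentially $\mathbb{N}^{+}$ itself, which is computably sober) yield a decision procedure contradicting the non-openness phenomenon noted after the lemma on $\mathbb{N}^{+}$ in Section~\ref{subsec:A-converging-sequence-and-its-limit}. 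For $A\subsetneq\overline{A}^{+\text{seq},N}$: trivial — in $(\mathbb{R}_c,c_{\mathbb{R}},\mathcal{T}_{\mathbb{R}},\tau_{\mathbb{R}})$ take $A=\{2^{-n}:n\in\mathbb{N}\}$, a computable sequence converging computably fast to $0\notin A$, so $0\in\overline{A}^{+\text{seq},N}\setminus A$.

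The main obstacle, such as it is, lies entirely in the strictness claims, and specifically in the separation $\overline{A}^{+\text{seq}}\subsetneq\overline{A}^{+}$: one must produce a genuinely sequential-closure-avoiding but effective-closure-realizing point, and this forces one to leave the "nice" sequential spaces and work with a computable topological space engineered so that effective openness is witnessed only by semi-decidable (not open) neighbourhood data — precisely the situation flagged in the paragraph before these propositions, where it is observed that effective closure and effective sequential closure differ exactly when points lack computably enumerable neighbourhood bases. I would therefore spend the bulk of the argument pinning down that one example (most likely the Ershov-topology-on-$\mathbb{R}_c$-with-Friedberg's-set construction, or a purpose-built single-point-bad-neighbourhood space), and dispatch the other three strictness claims with the one-line examples above. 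Everything else — the four inclusions — is pure definition-chasing using Malcev's condition and the fact that the image of $\tau$ generates $\mathcal{T}$.
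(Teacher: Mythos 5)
The paper itself offers no proof here (both propositions are explicitly left to the reader), so your argument can only be judged on its own merits. The chain of four inclusions is handled correctly: the constant sequence, forgetting the norming data, searching a semi-decidable effective open set along a convergent computable sequence, and the fact that the image of $\tau$ generates $\mathcal{T}$ are exactly the right ingredients, and that half of your proposal is complete.

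The strictness half, however, contains two concrete errors. First, your witness for $\overline{A}^{+\text{seq},N}\subsetneq\overline{A}^{+\text{seq}}$ fails: in $(\mathbb{N}^{+},c_{\mathbb{N}^{+}},\mathcal{T}_{\mathbb{N}^{+}},\tau_{\mathbb{N}^{+}})$ the sequence $u_{n}=n$ \emph{is} computably normed. Given a $\tau_{\mathbb{N}^{+}}$-name $W_{i}$ of an open set containing $\infty$, enumerate $W_{i}$ until an odd index $2N+1$ appears (one must appear, since only the sets $\beta(2N+1)=\{k\ge N\}$ contain $\infty$) and output $N$; equivalently, Proposition \ref{prop:Comp Seq Normed gives map to Sober space } identifies normedness with effective continuity of the induced map, and for $u_n=n$ that map is the identity on $\mathbb{N}^{+}$. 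So $\infty\in\overline{\mathbb{N}}^{+\text{seq},N}$ and your example exhibits an equality, not a strict inclusion. Second, your ``more simply'' witness for $\overline{A}^{+}\subsetneq\overline{A}$, namely $A=\mathbb{R}_{c}\setminus\{0\}$, also fails: $0$ \emph{is} effectively adherent to $A$, since from a $\tau_{\mathbb{R}}$-name of $O\ni 0$ one enumerates the defining rational intervals, finds one $]p,q[$ with $p<0<q$, and outputs the nonzero rational $q/2$. (Indeed $\mathbb{R}_{c}\setminus\{0\}$ is semi-decidable and $\mathbb{R}_{c}$ has a dense computable sequence, so the identification condition applies to it.) The Markov-style argument you invoke only shows that $\{0\}$ is not semi-decidable in $\mathbb{R}_{c}$, which is not what is needed. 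Your first instinct was the right one and you should have carried it out: take $A=F^{c}$ for Friedberg's semi-decidable non-open set $F$ and $x\in F\cap\overline{F^{c}}$; if $x$ were effectively adherent to $F^{c}$ then Proposition \ref{prop:RMS =00003D> Markov cond} would give $F^{c}\rightsquigarrow_{c_{\mathbb{R}}}\{x\}$, contradicting semi-decidability of $F$ --- this is exactly the mechanism of Proposition \ref{prop: Identification condition co-semi-decidable sets}. Finally, the separation $\overline{A}^{+\text{seq}}\subsetneq\overline{A}^{+}$, which you yourself flag as the hard case, is never actually established: the Ershov-topology-plus-Friedberg suggestion is left entirely unverified, so as it stands three of the four strictness claims are unproven or wrong.
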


\subsection{Effective discontinuities }

We define different notions of effective discontinuities thanks to
our different notions of effective closure. 

Classically, a discontinuity at a point $x$ for a function $f$ is
defined by a neighborhood of $f(x)$ whose preimage is not a neighborhood
of $x$. To obtain an effective statement, we use a positive definition,
equivalent to the previous one: a discontinuity at a point $x$ for
a function $f$ is defined by a neighborhood $B$ of $f(x)$, such
that $x$ is adherent to the complement of the preimage of $B$. 
\begin{defn}
Let $f$ be a map between computable topological spaces $(X,\nu,\mathcal{T}_{1},\tau_{1})$
and $(Y,\mu,\mathcal{T}_{2},\tau_{2})$. \emph{An effective discontinuity
for $f$ at $x$ }is given by an effective open set $O_{2}\in\mathcal{T}_{2}$,
which contains $f(x)$, but such that $x$ belongs to the effective
closure of the complement of the preimage of $O_{2}$. 

Summarized, this gives: $f$ is effectively discontinuous at $x$
if and only if:
\[
\exists O_{2}\in\mathcal{T}_{2},\,x\in f^{-1}(O_{2})\cap\overline{(f^{-1}(O_{2}))^{\boldsymbol{c}}}^{+}.
\]
\end{defn}

There is naturally a numbering associated to effective discontinuities:
a triple ($\nu$-name of $x$, $\tau_{2}$-name of $O_{2}$, $\mu_{\overline{(f^{-1}(O_{2}))^{\boldsymbol{c}}}^{+}}$-name
of $x$ (i.e. code for the program that shows that $x$ is effectively
adherent to $f^{-1}(O_{2}){}^{\boldsymbol{c}}$)) encodes all the
required data about the effective discontinuity. 

We have a similar definitions for the sequential closure: 
\begin{defn}
Let $f$ be a map between computable topological spaces $(X,\nu,\mathcal{T}_{1},\tau_{1})$
and $(Y,\mu,\mathcal{T}_{2},\tau_{2})$.\emph{ }Say that $f$ is \emph{effectively
sequentially discontinuous at $x$ }if and only if:
\[
\exists O_{2}\in\mathcal{T}_{2},\,x\in f^{-1}(O_{2})\cap\overline{(f^{-1}(O_{2}))^{\boldsymbol{c}}}^{+\text{seq}}.
\]
\end{defn}

In other words, there should be a computable sequence of points that
converges to $x$, but whose images by $f$ never meet a certain neighborhood
of $f(x)$. There is also a numbering associated to sequential effective
discontinuities: such a discontinuity is encoded by a triple consisting
of a $\nu$-name of $x$, a $\tau_{2}$-name of $O_{2}$, and the
code for a $\nu$-computable sequence that converges to $x$ while
its image by $f$ avoids $O_{2}$. 

And finally for the normed sequential closure. 
\begin{defn}
Let $f$ be a map between computable topological spaces $(X,\nu,\mathcal{T}_{1},\tau_{1})$
and $(Y,\mu,\mathcal{T}_{2},\tau_{2})$.\emph{ }Say that $f$ has
a \emph{normed effective sequential discontinuity at $x$ }if and
only if:
\[
\exists O_{2}\in\mathcal{T}_{2},\,x\in f^{-1}(O_{2})\cap\overline{(f^{-1}(O_{2}))^{\boldsymbol{c}}}^{+\text{seq,N}}.
\]
\end{defn}

We leave it to the reader to define the associated numbering. 
\begin{rem}
In \cite{BRATTKA2023}, Vasco Brattka introduces a notion of \emph{computably
discontinuous} \emph{problem} in the Weihrauch lattice. This notion
is often similar to the notions of effective discontinuities that
we consider here. See Example 18 of \cite{BRATTKA2023}, where a family
of functions $f_{A}$ are defined, $A\subseteq\mathbb{N}$. These
are defined on countable sets, and $f_{A}$ is computably discontinuous
in the sense of \cite{BRATTKA2023} exactly when $f_{A}$ has a computable
discontinuity in the sense of the present article. 

The notion of computably discontinuous problem of \cite{BRATTKA2023}
is obtained as a Type 2 generalization of the notion of \emph{productive
set}. If we translate this notion back to the Type 1 context, we obtain
a notion of productivity for problems, and not simply for sets. Recall
that a \emph{Type 1 problem} is a multi-function between numbered
sets. 
\begin{defn}
A Type 1 problem $P:(X,\nu)\rightrightarrows(Y,\mu)$ is called \emph{productive}
if there is a computable function $f$ which, given $i$, produces
$n=f(i)$ such that either: 
\end{defn}

\begin{itemize}
\item $n$ does not belong to $\text{dom}(\varphi_{i})$ while $n$ belongs
to $\text{dom}(\nu)$; 
\item Or $n$ belongs to $\text{dom}(\varphi_{i})\cap\text{dom}(\nu)$ and
$\mu(\varphi_{i}(n))\notin P(\nu(n)).$
\end{itemize}
One should probably explore in more details the relation between the
notion of computable discontinuity at a point considered here and
the notion of computably discontinuous problem introduced in \cite{BRATTKA2023}.
We however leave this for future research. (In particular, one important
aspect of the definition of a computably discontinuous problem is
that it can be applied to multi-valued functions, contrary to the
definitions we present here. Note that Spreen studied continuity of
Type 1 computable multi-functions in \cite{Spreen_2010}. Is it possible
to obtain results similar to the ones presented in the present article,
but for multi-functions, by using a notion of computably discontinuous
multifunction inspired by \cite{BRATTKA2023}?)
\end{rem}

\subsection{\label{subsec:Markov-Conditions}Markov Conditions }

If $(X,\nu)$ is a numbered set, recall that we denote 
\[
A\rightsquigarrow_{\nu}B
\]
 to say that $B$ is not a $\nu$-semi-decidable subset of $A\cup B$. 
\begin{defn}
\label{def:MarkovCond}Let $(X,\nu,\mathcal{T},\tau)$ be a computable
topological space. We say that it satisfies \emph{a Markov condition
with respect to effective closure }if the following holds:
\[
\forall x\in X,\,\forall A\subseteq X,\,x\notin A\,\&\,x\in\overline{A}^{+}\implies A\rightsquigarrow_{\nu}\{x\}.
\]

We say that it satisfies \emph{a Markov condition with respect to
effective sequential closure if }
\[
\forall x\in X,\,\forall A\subseteq X,\,x\notin A\,\&\,x\in\overline{A}^{+\text{seq}}\implies A\rightsquigarrow_{\nu}\{x\}.
\]

We say that it satisfies \emph{a Markov condition with respect to
normed effective sequential closure if }
\[
\forall x\in X,\,\forall A\subseteq X,\,x\notin A\,\&\,x\in\overline{A}^{+\text{seq},N}\implies A\rightsquigarrow_{\nu}\{x\}.
\]
\end{defn}

In words: when a point is effectively adherent to a set (this is a
condition that depends on $(\mathcal{T},\tau)$), this point cannot
be told apart from this set (this is a condition on $\nu$ which does
not depend on the computable topology $(\mathcal{T},\tau)$). 

Obviously the first condition implies the second one, which implies
the third one, since $\overline{A}^{+\text{seq,N}}\subseteq\overline{A}^{+\text{seq}}\subseteq\overline{A}^{+}$. 

Markov conditions can be rendered uniform, by asking that there be
an algorithm that takes as input a program that witnesses for $x\in\overline{A}^{+}$,
and that outputs a reduction to the halting problem: a computable
sequence $(u_{n})_{n\in\mathbb{N}}\in\text{dom}(\nu)^{\mathbb{N}}$
such that: 
\[
\forall n,\,\nu(u_{n})\in A\cup\{x\},
\]
\[
\forall n,\,\nu(u_{n})=x\iff\varphi_{n}(n)\uparrow.
\]

This uniform condition is satisfied in recursive metric spaces that
have passage to the limit algorithms, see Proposition \ref{prop:RMS =00003D> Markov cond}. 

We show that on finite sets, the Ershov topology is the only computable
topology which satisfies a Markov condition. Thus in this case, it
does imply effective continuity of computable functions.
\begin{lem}
\label{lem:Markov on finset}Let $X$ be a finite set equipped with
a numbering $\nu$. Then all notions of effective closure introduced
above coincide, and the only computable topology on $X$ that satisfies
a Markov condition is the Ershov topology. 
\end{lem}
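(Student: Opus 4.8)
The statement has two parts: first, that on a finite set all five notions of closure coincide; second, that among all Type 1 computable topologies on a finite numbered set, the Ershov topology is the unique one satisfying a Markov condition. The plan is to dispatch the first part by a short observation about finite topological spaces, then use it to reduce the three Markov conditions to a single one, and finally argue the uniqueness claim by analyzing what a Markov condition forces when $\overline{A}^{+}$ is computed against a finite effective basis.

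First I would establish that on a finite set $X$ every topology has only finitely many open sets, and since $X$ carries its natural decidable numbering (finite sets are always intrinsically embedded and $\nu$ may be taken decidable), the subnumbering $\tau$ enumerates a finite family of sets, each of which is trivially semi-decidable and co-semi-decidable. Consequently: (i) for any $A$ and any point $x$, there is a \emph{smallest} open set $U_x$ containing $x$ (the intersection of the finitely many effective open sets containing $x$, which is effective by computability of finite intersections); (ii) $x\in\overline{A}$ iff $U_x\cap A\neq\emptyset$, and this can be decided; (iii) a program witnessing $x\in\overline{A}^{+}$ need only act on the $\tau$-name of $U_x$ (every open set containing $x$ contains $U_x$, and $A\cap U_x\subseteq A\cap O$ for any such $O$), and since $U_x$ has a $\tau$-name that can be found effectively, $x\in\overline{A}^{+}$ iff $x\in\overline{A}$. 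The same minimal-neighborhood argument shows that the constant sequence $a^{\infty}$ with $a\in A\cap U_x$ is a computable, computably normed sequence converging to $x$ (it is eventually — in fact always — in every open set containing $x$, because such a set contains $U_x\ni a$); hence $x\in\overline{A}^{+\mathrm{seq},N}$ as well. This collapses the chain $A\subseteq\overline{A}^{+\mathrm{seq},N}\subseteq\overline{A}^{+\mathrm{seq}}\subseteq\overline{A}^{+}\subseteq\overline{A}$ to equalities.

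Given the collapse, the three Markov conditions become the single condition: for all $x$ and all $A\not\ni x$, if $x\in\overline{A}$ then $\{x\}$ is not a $\nu$-semi-decidable subset of $A\cup\{x\}$. Now I would argue uniqueness. For the Ershov topology $(\mathcal{T}_{SD},\nu_{SD})$ the effective closure equals the closure and a Markov condition holds essentially by fiat — but one must check it genuinely: if $x\in\overline{A}^{\mathrm{Ershov}}$, meaning $x$ lies in the Ershov-closure of $A$, then every semi-decidable (= every, since finite decidable numbering makes all subsets semi-decidable... careful) set containing $x$ meets $A$; in the finite decidable case every subset is semi-decidable, so the Ershov topology is discrete, every point is isolated, $\overline{A}=A$, the hypothesis $x\in\overline{A}\ \&\ x\notin A$ is vacuous, and the Markov condition holds trivially. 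So the real content is: \emph{any} computable topology on $X$ satisfying a Markov condition must be discrete, hence equal to this one. Suppose $(\mathcal{T},\tau)$ is not discrete; then some point $x$ is not isolated, so $U_x\neq\{x\}$, i.e. $A:=U_x\setminus\{x\}$ is nonempty and $x\notin A$, and by step (ii) above $x\in\overline{A}=\overline{A}^{+}$. But $\{x\}$ \emph{is} a $\nu_{\vert A\cup\{x\}}$-semi-decidable subset of $A\cup\{x\}=U_x$: with $\nu$ decidable on the finite set $X$, equality to $x$ is decidable, so $\{x\}$ is even decidable in $U_x$. This contradicts the Markov condition. Hence the topology is discrete, and a discrete computable topology on a finite set must have $\tau$ equivalent to $\nu_{SD}$ (both enumerate all subsets, and equivalence of the numberings is immediate since everything is decidable); i.e. it is the Ershov topology.

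\textbf{Expected main obstacle.} The delicate point is not any single inequality but making sure the "minimal open neighborhood" $U_x$ is genuinely \emph{effective} — that one can, from nothing, produce a $\tau$-name for it — rather than merely set-theoretically well-defined. This uses that the image of $\tau$ is a fixed finite list (so the finitely many effective open sets containing $x$ can be found by checking membership of $\nu$-names, which is decidable since $\nu$-semi-decidability of effective opens plus decidability of $\nu$ on a finite set lets us decide $x\in\tau(n)$ for the relevant $n$) together with computability of finite intersections from Definition~\ref{def:MAIN DEF}(4b); I would want to spell this out carefully, since the whole argument — both the collapse of the closure notions and the non-discrete-implies-contradiction step — rests on $U_x$ being a bona fide effective open set. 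A secondary, purely bookkeeping point is to state precisely which numbering of the finite set $X$ one uses and to note that any two numberings of a finite set making the space a computable topological space are equivalent, so "the Ershov topology" is unambiguous.
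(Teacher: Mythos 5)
Your first half (the collapse of all the closure notions via the minimal open neighbourhood $U_{x}$ and a constant sequence) is essentially the paper's argument and is correct; the obstacle you flag about effectively producing a $\tau$-name for $U_{x}$ is in fact a non-issue, because the witnessing programs for $x\in\overline{A}^{+}$ and for normedness are allowed to be non-uniform: one simply hard-codes a $\nu$-name of a single point $y\in A\cap U_{x}$ and outputs it (resp.\ outputs $N=0$) on every input, since $U_{x}\subseteq O$ for every open $O$ containing $x$.

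The second half has a genuine gap: you assume that ``$\nu$ may be taken decidable'', and the whole uniqueness argument rests on this (you use it to conclude that every subset is semi-decidable, that the Ershov topology is discrete, and that $\{x\}$ is decidable inside $U_{x}$). But the lemma concerns an \emph{arbitrary} numbering of a finite set, and the interesting instances are precisely the non-decidable ones: e.g.\ the Sierpi\'nski space $\{0,1\}$ with $s(n)=1$ iff $\varphi_{n}(n)\downarrow$, where $\{1\}$ is semi-decidable but $\{0\}$ is not. There the Ershov topology is the non-discrete Sierpi\'nski topology, and it \emph{does} satisfy the Markov condition ($0$ is effectively adherent to $\{1\}$, and $\{0\}$ is indeed not semi-decidable), so your intermediate claim ``a computable topology satisfying a Markov condition must be discrete'' is false, and the contradiction you derive from a non-isolated point evaporates once $\{x\}$ need not be semi-decidable in $U_{x}$. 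The paper's route avoids decidability entirely: given a semi-decidable $A$ that is not open, pick $x\in A$ adherent (hence, by the collapse, effectively adherent) to $A^{c}$; the Markov condition says $\{x\}$ is not semi-decidable in $A^{c}\cup\{x\}$, while restricting the semi-decidable set $A$ to $A^{c}\cup\{x\}$ (Proposition \ref{prop: Semi-decidable set intersected }) shows that $\{x\}=A\cap(A^{c}\cup\{x\})$ \emph{is} semi-decidable there --- a contradiction. This only shows that $\tau$ and $\nu_{SD}$ have the same image; the remaining equivalence of the two numberings requires a separate argument (Lemma \ref{lem:eff top given by top finset}), another step that your decidability shortcut papers over.
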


\begin{proof}
In a finite set, it is easy to see that each introduced notion of
effective closure coincides with the actual closure -the non-effective
closure. Indeed, suppose that $x\in\overline{A}$ for a certain subset
$A$ of $X$. Consider the intersection $O_{x}$ of all open sets
of $X$ that contain $x$. This is open, since it is a finite intersection
of open sets. Then $x\in\overline{A}$ if and only if there is a point
$y$ in $A\cap O_{x}$. If there is such a point, the constant sequence
$(y)_{n\in\mathbb{N}}$ is computable and converges to $x$. And thus
$x\in\overline{A}^{+\text{seq}}$, $x\in\overline{A}^{+\text{seq},N}$
and $x\in\overline{A}^{+}$.

Suppose then that a topology $(\mathcal{T},\tau)$ on $(X,\nu)$ satisfies
a Markov condition. Let $A$ be a semi-decidable set, we show that
it is an effective open for $(\mathcal{T},\tau)$, which is thus the
Ershov topology. If $A$ is not open for $(\mathcal{T},\tau)$, there
must be $x$ in $A$, which is adherent to $A^{c}$, thus effectively
adherent to it. And then Markov's condition implies that $\{x\}$
is not a semi-decidable subset of $A^{c}\cup\{x\}$. This contradicts
the fact that $A$ is a semi-decidable subset of $X$. 

We thus have shown that $\tau$ and $\nu_{SD}$ have the same image,
we have yet to prove that these numberings are equivalent to conclude
that the considered topology is the Ershov topology. But this is automatic
in a finite set by the following lemma, Lemma \ref{lem:eff top given by top finset}. 
\end{proof}
\begin{lem}
\label{lem:eff top given by top finset}On a finite numbered set $(X,\nu)$,
given an abstract topology $\mathcal{T}$, there is at most (up to
equivalence) one numbering $\tau$ of $\mathcal{T}$ that makes of
$(X,\nu,\mathcal{T},\tau)$ a computable topological space. This numbering
is the restriction of $\nu_{SD}$ to $\mathcal{T}$. 
\end{lem}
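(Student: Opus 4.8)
\textbf{Proof plan for Lemma \ref{lem:eff top given by top finset}.}
The plan is to show that on a finite numbered set $(X,\nu)$, if $(X,\nu,\mathcal{T},\tau)$ is a computable topological space, then $\tau$ must be equivalent to the restriction of $\nu_{SD}$ to $\mathcal{T}$; since the latter is a \emph{subnumbering} of $\mathcal{T}$ determined entirely by $\nu$ and $\mathcal{T}$, uniqueness up to equivalence follows at once. The argument has two halves: first, every set in the image of $\tau$ is $\nu$-semi-decidable, and the $\nu_{SD}$-name can be computed from the $\tau$-name; second, conversely, every set in $\mathcal{T}$ that is $\nu$-semi-decidable actually appears in the image of $\tau$, and a $\tau$-name can be computed from a $\nu_{SD}$-name.

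For the first half, I would simply invoke Malcev's condition (axiom 3 of Definition \ref{def:MAIN DEF}), which gives $\tau\le\nu_{SD}$; restricting codomains to $\mathcal{T}$ this says $\tau\le\nu_{SD}|_{\mathcal{T}}$, so nothing finite-specific is needed here. The real content is the reverse inequality $\nu_{SD}|_{\mathcal{T}}\le\tau$, and this is where finiteness enters. First one checks that every open set $O\in\mathcal{T}$ lies in the image of $\tau$: since $X$ is finite, $\mathcal{T}$ is finite, the image of $\tau$ is a classical basis generating $\mathcal{T}$ (axiom 1), so each $O\in\mathcal{T}$ is a \emph{finite} union of basic sets from the image of $\tau$; and a finite union is in particular a computable union, hence by axiom 4(a) lies in the image of $\tau$. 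Thus $\tau$ and $\nu_{SD}|_{\mathcal{T}}$ have the same image, namely all of $\mathcal{T}\cap\{\text{semi-decidable sets}\}$ — note on a finite set with a numbering every subset is semi-decidable, so in fact the image is all of $\mathcal{T}$. It remains to produce, computably from any index, a $\tau$-name of a given $O\in\mathcal{T}$. Here I would argue by brute-force search enabled by finiteness: enumerate $\tau$-names $\tau(0),\tau(1),\dots$ of basic open sets; since $X$ is finite one can, given a $\nu$-name for each of the finitely many points, semi-decide both ``$x\in\tau(k)$'' (Malcev) and, using a $\nu_{SD}$-name of $O$, ``$x\in O$''; the subtle point is that equality of finite sets is only $\Pi^0_1$ a priori, so instead one uses that $\mathcal{T}$ is finite and fixed, picks in advance (non-uniformly, as allowed for a non-uniform statement) the finitely many basic indices whose union is $O$, and outputs the canonical computable-union index of that finite family via axiom 4(a).

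The cleanest route, and the one I would actually write, avoids searching for set-equality and instead leans on the phrase ``up to equivalence'': it suffices to exhibit \emph{some} numbering realizing $\nu_{SD}|_{\mathcal{T}}\le\tau$, and since $\mathcal{T}=\{O_1,\dots,O_r\}$ is finite, for each $j$ fix once and for all a finite list of $\tau$-indices of basic sets with union $O_j$ and let $c_j$ be the corresponding computable-union index from axiom 4(a); then a machine that, on a $\nu_{SD}|_{\mathcal{T}}$-name $n$, determines which $O_j$ it names — a fixed finite case distinction, decidable because there are only $r$ possibilities and each membership question $\nu(i)\subseteq O_j$ is $\nu$-semi-decidable with $X$ finite — outputs $c_j$. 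The main obstacle, and the only place care is required, is exactly this decidability of ``which of the finitely many open sets does $n$ name'': one must observe that because $X$ is finite and $\mathcal{T}$ is finite, the sets $O_j$ are pairwise separated by finitely many membership tests, so although each individual test is only semi-decidable, knowing that $n$ names \emph{one} of the $O_j$ lets a dovetailing search settle which one. With that in hand $\nu_{SD}|_{\mathcal{T}}\le\tau\le\nu_{SD}|_{\mathcal{T}}$, so $\tau\equiv\nu_{SD}|_{\mathcal{T}}$, proving uniqueness.
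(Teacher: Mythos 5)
Your overall structure is right (Malcev's condition gives $\tau\le\nu_{SD}|_{\mathcal{T}}$ for free; finiteness must carry the converse), and your observation that every $O\in\mathcal{T}$ lies in the image of $\tau$ because a finite union of basic sets is a computable union is correct and worth making explicit. But the step you yourself flag as ``the main obstacle'' is where the argument genuinely fails: from a $\nu_{SD}$-name $n$ known to denote one of the finitely many open sets $O_{1},\dots,O_{r}$, you \emph{cannot} in general computably determine which one it denotes. A $\nu_{SD}$-name only yields positive information (an enumeration of the elements of the named set); it never certifies non-membership. So if $O_{i}\subsetneq O_{j}$ and $n$ names $O_{i}$, no amount of dovetailing will ever rule out the hypothesis that $n$ names $O_{j}$ --- you would be waiting for a membership test to fail to halt. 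A concrete obstruction: take $X=\{a,b\}$ with a numbering in which $\{b\}$ is semi-decidable but $\{a\}$ is not, and $\mathcal{T}=\{\emptyset,\{b\},X\}$; a recursion-theorem argument shows no computable function can separate $\nu_{SD}$-names of $\{b\}$ from $\nu_{SD}$-names of $X$. The same problem sinks your first route, since ``picks in advance the finitely many basic indices whose union is $O$'' presupposes knowing which $O$ the input names.

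The missing idea --- and the one the paper uses --- is to avoid identifying the named set altogether. Fix in advance a $\tau$-name $n_{i}$ for each $A_{i}\in\mathcal{T}$. Given a $\nu_{SD}$-name of $A\in\mathcal{T}$, enumerate the elements of $A$ and, at each stage, output $n_{i}$ for every $A_{i}$ already verified to be contained in the partial enumeration (this includes $\emptyset$ at stage $0$, so the output sequence is total). Every set emitted is a subset of $A$, and since $A$ itself equals some $A_{j}$ and its finitely many elements are all eventually enumerated, $A_{j}$ is eventually emitted; hence the union of the emitted sequence is exactly $A$. Axiom 4(a) (computable unions) then converts this $\tau$-computable sequence into a single $\tau$-name of $A$. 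The point is that one only ever needs to \emph{verify inclusions} $A_{i}\subseteq A$, which is a positive ($\Sigma^{0}_{1}$-style) task compatible with semi-decidability, rather than to \emph{decide equality}, which is not.
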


\begin{proof}
Suppose that $(X,\nu,\mathcal{T},\tau)$ is a computable topology.
By definition, $\tau\le\nu_{SD}$, it thus suffices to show the converse
relation for the restriction of $\nu_{SD}$ to $\mathcal{T}$: $\nu_{SD\,\vert\mathcal{T}}\le\tau$.
The set $\mathcal{T}$ is finite, fix a tuple $((A_{1},n_{1}),(A_{2},n_{2}),...,(A_{k},n_{k}))$
that contains each element of $\mathcal{T}$ accompanied by a $\tau$-name
for it. 

Given a $\nu_{SD}$-name for a set $A$ in $\mathcal{T}$, proceed
as follows to obtain a $\tau$-name for it. Enumerate elements of
$A$ thanks to its $\nu_{SD}$-name. At each step of the enumeration,
check if one of the $A_{i}$ is included in the obtained partial enumeration.
If it is, produce the corresponding $\tau$-name $n_{i}$. 

The above procedure gives a $\tau$-computable sequence. Because $(X,\nu,\mathcal{T},\tau)$
is a computable topology, it is possible to compute its union, thus
obtaining a $\tau$-name of $A$. 
\end{proof}
Note that it is unclear whether the Ershov topology on a set $X$
should always satisfy a Markov condition, this is related to the notion
of an intrinsic embedding (see Section \ref{subsec:Intrinsic-embeddings}). 
\begin{problem}
When must the Ershov topology on a numbered set satisfy a Markov condition? 
\end{problem}

\subsection{Examples of spaces that satisfy Markov conditions }

Here, we give two examples of spaces that satisfy Markov conditions:
recursive metric spaces with algorithm of passage to the limit, and
computably sober spaces. Note that a RMS which has an algorithm to
compute limits but which is not computably separable does not have
to be sober. 

The following is a generalization of \cite[Theorem 4.2.2]{Markov1963}.
\begin{prop}
\label{prop:RMS =00003D> Markov cond}A recursive metric space that
admits a passage to the limit algorithm satisfies all Markov conditions
of Definition \ref{def:MarkovCond}. 
\end{prop}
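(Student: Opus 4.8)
The plan is to prove that a recursive metric space $(X,\nu,d)$ with a passage to the limit algorithm satisfies the strongest of the three Markov conditions, namely the one with respect to the effective closure $\overline{A}^{+}$; by the chain of inclusions $\overline{A}^{+\text{seq},N}\subseteq\overline{A}^{+\text{seq}}\subseteq\overline{A}^{+}$ noted earlier, this automatically yields the other two. In fact I would prove the \emph{uniform} version, since this is what is asserted in the discussion preceding the statement, and the non-uniform version follows trivially.

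\begin{proof}[Proof sketch]
Fix $x\in X$ and $A\subseteq X$ with $x\notin A$ and $x\in\overline{A}^{+}$. Let $\Phi$ be a program witnessing $x\in\overline{A}^{+}$: on input a $\tau$-name of an effective open set $O$ containing $x$, it outputs a $\nu$-name of a point of $O\cap A$. We must show $A\rightsquigarrow_{\nu}\{x\}$, i.e. that $\{x\}$ is not a $\nu_{\vert A\cup\{x\}}$-semi-decidable subset of $A\cup\{x\}$. Suppose for contradiction that it is; then there is a program $\Psi$ which, on $\nu$-names of points of $A\cup\{x\}$, halts exactly on those naming $x$. The strategy is the classical Markov diagonalization: given $e\in\mathbb{N}$, we build a computable sequence $(u^{e}_{n})_{n}$ of points of $A\cup\{x\}$ that converges to $x$ at speed $2^{-n}$, such that $\lim_{n} u^{e}_{n}=x$ if and only if $\varphi_{e}(e)\uparrow$, thereby reducing the complement of the halting problem to membership of a limit in $\{x\}$ — which, via the passage to the limit algorithm producing a $\nu$-name of the limit and then $\Psi$, would make the halting problem decidable, a contradiction.

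The construction of $(u^{e}_{n})_{n}$ goes as follows. We run $\varphi_{e}(e)$ in the background. As long as it has not halted by stage $n$, we want $u^{e}_{n}$ to be close to $x$; to get such a point \emph{inside $A$}, we feed $\Phi$ the $\tau$-name of the ball $B(x,2^{-n})$ (an effective open set containing $x$ — here we use that $d$ is $(\nu\times\nu,c_{\mathbb{R}})$-computable and $x$ is $\nu$-computable, so this ball, or rather a suitable effective open subset of it still containing $x$, has a $\tau$-name uniformly computable from a $\nu$-name of $x$ and from $n$; this is exactly the kind of effective open set produced by the numbering $\tau$ of a RMS described in the excerpt), obtaining a $\nu$-name of some $a_{n}\in A\cap B(x,2^{-n})$, and set $u^{e}_{n}=a_{n}$. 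If $\varphi_{e}(e)$ halts, say at stage $n_{0}$, then from stage $n_{0}$ on we freeze the sequence at the value it last had, i.e. $u^{e}_{n}=a_{n_{0}-1}$ for all $n\ge n_{0}$ (or $u^{e}_{n}=x$ if $n_0 = 0$, handling that edge case separately). In the first case the sequence converges to $x$ faster than $2^{-n}$; in the second case it is eventually constant equal to a point of $A$, hence converges to a point $\ne x$ (since $x\notin A$), and it still converges at a computable speed (eventually constant). In both cases the sequence is computable uniformly in $e$, with a uniformly computable modulus of convergence, so by the passage to the limit algorithm we compute a $\nu$-name of $\lim_{n}u^{e}_{n}$, and then $\Psi$ halts on this name iff the limit is $x$ iff $\varphi_{e}(e)\uparrow$. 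Since $e$ was arbitrary and everything is uniform, this decides the halting problem — contradiction. Hence $\{x\}$ is not semi-decidable in $A\cup\{x\}$, i.e. $A\rightsquigarrow_{\nu}\{x\}$.
\end{proof}

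The main obstacle, and the point that needs care rather than the diagonalization (which is textbook Markov), is the manufacture of the effective open sets fed to $\Phi$: one must check that from a $\nu$-name of $x$ and an integer $n$ one can uniformly compute a $\tau$-name of an effective open set that contains $x$ and is contained in $B(x,2^{-n})$. This is where the specific form of the numbering $\tau$ of a recursive metric space enters — a $\tau$-name packages a $\nu_{SD}$-name of the set together with a code for a computable function bounding the radius of an inscribed ball with the required stability property — and one has to exhibit such data for (an effective open refinement of) $B(x,2^{-n})$. Granting this, the sequence $(a_{n})_{n}$ lands in $A$, shrinks to $x$ at a controlled rate, and the rest is the reduction described above. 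I would also remark that the argument only uses that $x$ is $\nu$-computable and that arbitrarily small effective open neighborhoods of $x$ are uniformly available, so it transparently gives the uniform Markov condition claimed before Proposition \ref{prop:RMS =00003D> Markov cond}.
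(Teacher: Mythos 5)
Your proposal is correct and follows essentially the same route as the paper: extract from the adherence data a computable sequence in $A$ converging to $x$ at speed $2^{-n}$ (the paper does this by first observing that all three effective closures coincide in a RMS, since rational balls centred at a computable point form a uniformly effective neighbourhood basis --- exactly the point you flag as needing care), then run Markov's diagonalization, freezing the sequence when $\varphi_e(e)$ halts and applying the passage-to-the-limit algorithm to turn the limits into a $\nu$-computable sequence equal to $x$ precisely off the halting set. The only cosmetic difference is that you fold the ``closures coincide'' reduction into the construction by feeding the witness $\Phi$ the balls $B(x,2^{-n})$ directly.
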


In what follows we denote by $\varphi_{p}(n)\uparrow^{k}$ the fact
that a computation of $\varphi_{p}(n)$ does not end in $k$ steps
(it is thus a relation on triples $(p,n,k)$). 
\begin{proof}
In a RMS, effective closure and effective sequential closure and normed
closure agree, since every point admits a r.e. basis of neighborhoods
(open balls with rational radii centered at this point). Thus it suffices
to prove that Markov's condition is satisfied for the sequential closure. 

Fix a RMS $(X,\nu,d)$, we suppose it admits an algorithm that computes
limits of sequences that converge exponentially fast. 

Suppose that $(u_{n})_{n\in\mathbb{N}}$ is a computable sequence
of $X$ that converges to a computable point $x\in X$. We can in
fact suppose that $(u_{n})_{n\in\mathbb{N}}$ converges at exponential
speed (i.e. that for all $n$, $d(u_{n},x)<2^{-n})$ by passing to
a computable subsequence, because $x$ is computable. 

For each $p$, we define an effective sequence $(w_{n}^{p})_{n\in\mathbb{N}}$
of points in $X$ associated to the $p$th recursive function $\varphi_{p}$.
The sequence $(w_{n}^{p})_{n\in\mathbb{N}}$ is defined as follows:
if $\varphi_{p}(p)\uparrow^{k}$, then $w_{n}^{p}=u_{n}$ for each
$n\le k$. If the computation of $\varphi_{p}(p)$ halts after $k$
steps, $w_{n}^{p}=u_{k}$ for all $n\ge k$. 

Each sequence $(w_{n}^{p})_{n\in\mathbb{N}}$ is Cauchy, and in fact
it converges at least as fast as the original sequence $(u_{n})_{n\in\mathbb{N}}$.
Thus the algorithm of passage to the limit of $X$ can be applied
to any sequence $(w_{n}^{p})_{n\in\mathbb{N}}$. 

Consider the sequence $(w_{p})_{p\in\mathbb{N}}$ obtained by using
the algorithm of passage to the limit on each sequence $(w_{n}^{p})_{n\in\mathbb{N}}$,
for $p\in\mathbb{N}$. This is a $\nu$-computable sequence, since
it was obtained by using the algorithm of passage to the limit which
produces $\nu$-names. 

If follows directly from the construction above that $w_{p}\ne x$
exactly on the halting set $\{p,\,\varphi_{p}(p)\downarrow\}$. And
thus we indeed have $\{u_{n},n\in\mathbb{N}\}\rightsquigarrow_{\nu}\{x\}$. 
\end{proof}
\begin{cor}
\label{prop:Markov COnd N+}$(\mathbb{N}^{+},c_{\mathbb{N}^{+}},\mathcal{T}_{\mathbb{N}^{+}},\tau_{\mathbb{N}^{+}})$
satisfies Markov conditions for all notions of effective closure. 
\end{cor}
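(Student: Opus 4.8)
The plan is to derive Corollary~\ref{prop:Markov COnd N+} directly from Proposition~\ref{prop:RMS =00003D> Markov cond} by exhibiting $(\mathbb{N}^{+},c_{\mathbb{N}^{+}},\mathcal{T}_{\mathbb{N}^{+}},\tau_{\mathbb{N}^{+}})$ as a recursive metric space that admits a passage to the limit algorithm. First I would equip $\mathbb{N}^{+}\cong\mathbb{N}\cup\{\infty\}$ with the metric $d$ induced by the identification $n\leftrightarrow 0^{n}1^{\infty}$, $\infty\leftrightarrow 0^{\infty}$ and the standard ultrametric on $\{0,1\}^{\mathbb{N}}$, so that $d(n,m)=2^{-\min(n,m)}$ for $n\ne m$ (finite or infinite). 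One checks this is a metric compatible with $\mathcal{T}_{\mathbb{N}^{+}}$: the balls of small radius around a finite $n$ are the singletons $\{n\}$, and the balls around $\infty$ are the tails $\{k\ge N\}$, which is exactly the basis $\mathcal{B}$.

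Next I would verify that $(\mathbb{N}^{+},c_{\mathbb{N}^{+}},d)$ is a recursive metric space, i.e. that $d$ is $(c_{\mathbb{N}^{+}}\times c_{\mathbb{N}^{+}},c_{\mathbb{R}})$-computable: given $c_{\mathbb{N}^{+}}$-names of two points, i.e. two codes for non-decreasing $\{0,1\}$-sequences, one computes $d$ to precision $2^{-n}$ by reading both sequences up to position $n$; if a $1$ has appeared in both by then one reads off $\min$ of the two switch positions exactly, and otherwise $d\le 2^{-n}$, which suffices. Then I would check that the space admits an algorithm of passage to the limit in the sense of Definition~\ref{def:Algo passage limit }: given a $c_{\mathbb{N}^{+}}^{\mathrm{id}_{\mathbb{N}}}$-name of a computable sequence $(v_{k})$ converging at a computable speed bounded by some $\beta$ to a computable limit $v_{\infty}$, produce a $c_{\mathbb{N}^{+}}$-name for $v_{\infty}$. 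The key observation is that in this ultrametric, $d(v_k,v_\infty)<2^{-N}$ forces $v_k$ and $v_\infty$ to agree on the first $N$ coordinates of their $\{0,1\}$-sequences; so using the convergence modulus $\beta$ one can, for each coordinate $j$, find $k$ large enough that coordinate $j$ of $v_{\infty}$ equals coordinate $j$ of $v_k$, which is computable from the name of the sequence. This yields a computable $\{0,1\}$-sequence which is automatically non-decreasing (being the pointwise limit of non-decreasing sequences that stabilize on each coordinate), hence a legitimate $c_{\mathbb{N}^{+}}$-name of $v_{\infty}$.

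Having established that $(\mathbb{N}^{+},c_{\mathbb{N}^{+}},d)$ is an RMS with passage to the limit, Proposition~\ref{prop:RMS =00003D> Markov cond} immediately gives that it satisfies all three Markov conditions of Definition~\ref{def:MarkovCond}, provided I also confirm that the metric topology of $d$ coincides with $\mathcal{T}_{\mathbb{N}^{+}}$ and that the computable-topology structure derived from the RMS (as in the Definition preceding Section~\ref{part: On the continuity of computable functions}) agrees with $(\mathcal{T}_{\mathbb{N}^{+}},\tau_{\mathbb{N}^{+}})$ up to equivalence of numberings. The latter is routine: open balls around finite points are singletons and around $\infty$ are tails, these are uniformly semi-decidable, and both $\tau_{\mathbb{N}^{+}}$ and the RMS-numbering describe exactly the computable unions of such basic sets.

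The main obstacle I anticipate is not any single hard step but the bookkeeping needed to match the three structures (the abstract topology, the metric topology, and the two competing effective-open numberings) precisely enough that Proposition~\ref{prop:RMS =00003D> Markov cond} applies verbatim; in particular one must be a little careful that the notion of ``effective closure'' used in the Markov conditions is computed with respect to $(\mathcal{T}_{\mathbb{N}^{+}},\tau_{\mathbb{N}^{+}})$ and not some other presentation. An alternative, essentially equivalent route that sidesteps the RMS machinery would be to prove the sequential Markov condition for $\mathbb{N}^{+}$ by hand, mimicking the proof of Proposition~\ref{prop:RMS =00003D> Markov cond}: given a computable sequence $(u_p)$ in $\mathbb{N}^{+}$ converging to $x$ with $x\notin\{u_p\}$, one builds for each $p$ the ``frozen'' sequence that follows $(u_n)$ until $\varphi_p(p)$ halts and then stays put, takes its (computable) limit $w_p$, and observes $w_p\ne x$ iff $\varphi_p(p)\downarrow$, giving $\{u_p\}\rightsquigarrow_{c_{\mathbb{N}^{+}}}\{x\}$; since $x$ again has the r.e.\ neighborhood basis $\{\{k\ge N\}: N\in\mathbb{N}\}$, the effective, sequential and normed closures coincide and all three conditions follow at once.
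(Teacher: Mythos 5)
Your proposal is correct and follows essentially the same route as the paper: the paper's proof simply asserts that $(\mathbb{N}^{+},c_{\mathbb{N}^{+}})$ is a recursive metric space with the metric inherited from Cantor space, that it admits a passage to the limit algorithm, and then invokes Proposition \ref{prop:RMS =00003D> Markov cond}. You have merely filled in the routine verifications that the paper leaves to the reader.
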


\begin{proof}
One easily checks that $(\mathbb{N}^{+},c_{\mathbb{N}^{+}},\mathcal{T}_{\mathbb{N}^{+}},\tau_{\mathbb{N}^{+}})$
is a recursive metric space, with distance inherited from that of
the Cantor Space. It is also easy to see that it has a passage to
the limit algorithm. The result then follows from Proposition \ref{prop:RMS =00003D> Markov cond}. 
\end{proof}
\begin{thm}
\label{thm: Comp Sober =00003D> Markov cond Normed }Let $(X,\nu,\mathcal{T},\tau)$
be a computably sober Type 1 computable topological space. Then $(X,\nu,\mathcal{T},\tau)$
satisfies the Markov condition for the normed effective sequential
closure. 
\end{thm}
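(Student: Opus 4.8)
The plan is to unwind the definition of computable soberness and the definition of the normed effective sequential closure, and to produce, for a point $x$ effectively adherent in the normed-sequential sense to a set $A$ not containing $x$, an explicit reduction of the halting problem witnessing $A\rightsquigarrow_\nu\{x\}$. So suppose $x\notin A$ and $x\in\overline{A}^{+\text{seq},N}$. By definition there is a $\nu$-computable sequence $(u_n)_{n\in\mathbb N}$ of points of $A$ converging to $x$, together with a program $P$ which, given the $\tau$-name of an open set $O\ni x$, outputs $N$ with $u_n\in O$ for all $n\ge N$. By Proposition \ref{prop:Comp Seq Normed gives map to Sober space}, this is the same data as an effectively continuous map $f\colon(\mathbb N^{+},c_{\mathbb N^{+}},\mathcal T_{\mathbb N^{+}},\tau_{\mathbb N^{+}})\to(X,\nu,\mathcal T,\tau)$ with $f(n)=u_n$ and $f(\infty)=x$.

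Next I would use soberness to upgrade effective continuity of $f$ to computability of $f$: since $(X,\nu,\mathcal T,\tau)$ is computably sober, $\tau^*\equiv\nu$, and hence by the Proposition of Section \ref{subsec:Sober-spaces} (effectively continuous maps into computably sober spaces are computable) $f$ is $(c_{\mathbb N^{+}},\nu)$-computable. Concretely this gives a total computable $F$ such that whenever $i$ is a $c_{\mathbb N^{+}}$-name of some $k\in\mathbb N^{+}$, $F(i)$ is a $\nu$-name of $f(k)=u_k$ (with $u_\infty:=x$). Now I would feed $F$ the classical ``halting'' family of names: for each $p$, let $\sigma_p\in\mathbb N^{+}$ be the non-decreasing $0/1$ sequence whose value jumps from $0$ to $1$ exactly at the step (if any) at which $\varphi_p(p)$ halts; under the identification $\mathbb N^{+}\cong\mathbb N\cup\{\infty\}$ this is $0^{m}1^{\infty}$ if $\varphi_p(p)$ halts in $m$ steps, and $0^{\infty}=\infty$ otherwise. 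The map $p\mapsto(\text{index of }\sigma_p)$ is computable, so $p\mapsto F(\text{index of }\sigma_p)$ is a computable sequence of $\nu$-names. Its value is a $\nu$-name of $u_m\in A$ when $\varphi_p(p)\downarrow$, and a $\nu$-name of $x$ when $\varphi_p(p)\uparrow$. Since $x\notin A$, this is exactly a many-one reduction of the complement of the halting set into the $\nu_{\vert A\cup\{x\}}$-preimage of $\{x\}$, so $\{x\}$ is not $\nu$-semi-decidable in $A\cup\{x\}$, i.e. $A\rightsquigarrow_\nu\{x\}$.

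I expect the only delicate point to be the bookkeeping that makes $\sigma_p$ a genuine element of $\mathbb N^{+}$ with a uniformly computable $c_{\mathbb N^{+}}$-name: one must check that the partial recursive function $n\mapsto[\varphi_p(p)\ \text{halts within}\ n\ \text{steps}]$ is total and $0/1$-valued and non-decreasing, which is immediate, and that an index for it can be computed from $p$, which is the $s$-$m$-$n$ theorem. Everything else is a direct chaining of the two cited results, so the proof is short. One small subtlety worth flagging explicitly: the argument shows $A\rightsquigarrow_\nu\{x\}$ via reducing $\overline{K}$ rather than $K$, which is fine since semi-decidability of $\{x\}$ in $A\cup\{x\}$ would make $\overline K$ c.e., a contradiction; alternatively one can swap the roles of the halting and non-halting cases. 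This also yields the uniform version of the Markov condition, since $F$ and the index map are obtained uniformly from the $\mu_{\overline A^{+\text{seq},N}}$-name of $x$, should one wish to record that.
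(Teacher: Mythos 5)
Your proof is correct and follows essentially the same route as the paper: Proposition \ref{prop:Comp Seq Normed gives map to Sober space} turns the normed sequence into an effectively continuous map $f\colon\mathbb{N}^{+}\to X$, computable soberness upgrades $f$ to a computable map, and one then transfers the non-semi-decidability of $\{\infty\}$ in $\mathbb{N}^{+}$ along $f^{-1}(\{x\})=\{\infty\}$. The only difference is that where the paper simply cites Corollary \ref{prop:Markov COnd N+} for the fact that $\mathbb{N}\rightsquigarrow_{c_{\mathbb{N}^{+}}}\{\infty\}$, you unfold it into the explicit halting-set family $\sigma_{p}$ and compose with $F$, which is the same diagonalization carried out by hand.
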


\begin{proof}
By Proposition \ref{prop:Comp Seq Normed gives map to Sober space },
any computable normed sequence $(u_{n})_{n\in\mathbb{N}}$ which converges
to a point $x$ in $(X,\nu,\mathcal{T},\tau)$ gives rise to a computable
map $f:\mathbb{N}^{+}\rightarrow X$ given by $f(n)=u_{n}$ and $f(\infty)=x$.
By Corollary \ref{prop:Markov COnd N+}, $\mathbb{N}^{+}$ satisfies
all Markov conditions, so $\{\infty\}$ is not a semi-decidable subset
of $\mathbb{N}^{+}$. Because $f^{-1}(\{x\})=\{\infty\}$, $\{x\}$
cannot be semi-decidable inside of $\{x\}\cup\{u_{n},n\in\mathbb{N}\}$,
as the preimage of a semi-decidable set by a computable map is semi-decidable. 
\end{proof}

\subsection{Relation with the WSO principle }

In this section, we discuss the relation between the Markov conditions
and the constructive principle WSO introduced in \cite{Bauer2012}.
We will use the space $(\mathbb{N}^{+},c_{\mathbb{N}^{+}},\mathcal{T}_{\mathbb{N}^{+}},\tau_{\mathbb{N}^{+}})$
introduced in Section \ref{subsec:A-converging-sequence-and-its-limit}

The following statement, known as the WSO principle, for ``weakly
sequentially open'', was introduced by Bauer and Lešnik in \cite{Bauer2012},
in the context of synthetic topology:
\[
\forall U\in\mathcal{O}(\mathbb{N}^{+}),\,\infty\in U\implies\exists n\in\mathbb{N},\,n\in U.
\]

In \cite{Bauer2012} and in \cite{Bauer2023}, this principle is used
as the key ingredient of various other continuity results.

In terms of Type 1 computability, we can translate this principle
as follows: 
\begin{prop}
\textup{\label{prop: Type 1 WSO }Every $c_{\mathbb{N}^{+}}$-semi-decidable
set that contains $\infty$ also contains a point of $\mathbb{N}$.
Furthermore, there is a computable multi-function that can, given
a $c_{\mathbb{N}^{+}}$-semi-decidable set $A$ that contains $\infty$,
produce a point of $A\cap\mathbb{N}$.}
\end{prop}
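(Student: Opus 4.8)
The plan is to unwind the definitions and argue directly with the numbering $c_{\mathbb{N}^{+}}$ and the semi-decidability of $A$. Recall that a point $k\in\mathbb{N}^{+}$ is identified, via $0^{n}1^{\infty}\leftrightarrow n$ and $0^{\infty}\leftrightarrow\infty$, with a non-decreasing binary sequence, and a $c_{\mathbb{N}^{+}}$-name of $k$ is an index $i$ with $\varphi_{i}\in\{0,1\}^{\mathbb{N}}$ non-decreasing and $\varphi_{i}=k$. A $c_{\mathbb{N}^{+}}$-semi-decidable set $A$ is given by a partial recursive $\psi$ such that for every name $i$ of a point of $\mathbb{N}^{+}$, $\psi(i)\downarrow$ iff $c_{\mathbb{N}^{+}}(i)\in A$. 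The strategy is: given such a $\psi$ witnessing $\infty\in A$, produce from it a name of a point $n\in\mathbb{N}$ which also lies in $A$, uniformly in $\psi$; this simultaneously proves both the non-uniform statement and the ``furthermore'' clause.

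First I would fix, once and for all, a canonical name $e_{\infty}$ of $\infty$, i.e.\ an index with $\varphi_{e_{\infty}}\equiv 0$, so that $\psi(e_{\infty})\downarrow$ by hypothesis, say after $k_{0}$ steps of computation. The key idea is a ``delayed'' construction in the spirit of the proof of Proposition~\ref{prop:RMS =00003D> Markov cond}: I would build, by the recursion theorem, an index $j$ for a binary non-decreasing sequence $\varphi_{j}$ that behaves like $0^{\infty}$ for as long as the computation $\psi(j)$ has not halted, and switches to $0^{m}1^{\infty}$ (for the current stage $m$) once it does. Concretely, on input $p$, $\varphi_{j}$ runs $\psi(j)$ for $p$ steps; if it has not halted, output $0$; if it halted at some step $m\le p$, output $0$ for $p<m$ and $1$ for $p\ge m$. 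This $\varphi_{j}$ is total, binary, and non-decreasing, hence $j\in\mathrm{dom}(c_{\mathbb{N}^{+}})$, so $j$ is a legitimate name of the point $c_{\mathbb{N}^{+}}(j)$, which is $m\in\mathbb{N}$ if $\psi(j)\downarrow$ at step $m$ and $\infty$ otherwise.

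Now I would run the diagonal argument: if $\psi(j)\uparrow$, then $\varphi_{j}\equiv 0$, so $c_{\mathbb{N}^{+}}(j)=\infty\in A$, and since $j$ is a name of a point of $A$ we must have $\psi(j)\downarrow$ --- a contradiction. Hence $\psi(j)\downarrow$, at some step $m$, so $c_{\mathbb{N}^{+}}(j)=m\in\mathbb{N}$, and $\psi(j)\downarrow$ now says precisely $m=c_{\mathbb{N}^{+}}(j)\in A$. Thus $m\in A\cap\mathbb{N}$, proving the first assertion. For the uniform ``furthermore'' clause, observe that every step above is effective in an index for $\psi$: the application of the recursion theorem producing $j$ is uniform, and from $j$ one effectively reads off $m$ by searching for the halting step of $\psi(j)$. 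So there is a computable (multi-)function sending any semi-decidable-set index with $\infty\in A$ to such an $m\in A\cap\mathbb{N}$; it is naturally multi-valued because different indices for the same set $A$, or even the same index processed differently, may yield different witnesses $m$.

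The main obstacle --- really the only delicate point --- is making sure the self-referential index $j$ is genuinely a name of a point of $\mathbb{N}^{+}$, i.e.\ that $\varphi_{j}$ is \emph{total}, \emph{$\{0,1\}$-valued}, and \emph{non-decreasing}, so that $\psi$ is obligated to behave correctly on it; totality is what lets the diagonalization bite. One must phrase the recursion-theorem construction so that $\varphi_{j}(p)$ is defined for every $p$ regardless of whether $\psi(j)$ ever halts --- this is why the construction outputs $0$ while waiting rather than diverging. Once that is set up, the rest is the standard halting-problem diagonalization already used in this paper, and the uniformity is immediate from inspection of the construction.
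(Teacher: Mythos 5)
Your proof is correct, but it takes a genuinely different route from the paper's. The paper deduces the statement in two lines from machinery it has already built: $\infty$ is effectively (sequentially) adherent to $\mathbb{N}$, the space $(\mathbb{N}^{+},c_{\mathbb{N}^{+}},\mathcal{T}_{\mathbb{N}^{+}},\tau_{\mathbb{N}^{+}})$ satisfies the Markov conditions by Corollary \ref{prop:Markov COnd N+} (itself a consequence of the general recursive-metric-space result, Proposition \ref{prop:RMS =00003D> Markov cond}), hence $\{\infty\}$ is not semi-decidable in $\mathbb{N}^{+}$ and any semi-decidable $A\ni\infty$ must meet $\mathbb{N}$; the witness is then found by exhaustive search over canonical names of $0,1,2,\dots$. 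You instead give a self-contained diagonalization: by the recursion theorem you build a name $j$ whose associated sequence stays at $0$ while $\psi(j)$ has not halted and switches to $0^{m}1^{\infty}$ when it halts at stage $m$, and you correctly verify the one delicate point, namely that $\varphi_{j}$ is total, binary and non-decreasing so that $j$ really is a $c_{\mathbb{N}^{+}}$-name and $\psi$ is obliged to behave correctly on it. In effect you have inlined, for this particular space, the delayed-switching construction that the paper carries out once and for all in the proof of Proposition \ref{prop:RMS =00003D> Markov cond}. What the paper's route buys is brevity and the conceptual point that the WSO principle is just an instance of a Markov condition; what yours buys is independence from the RMS/passage-to-the-limit apparatus and a slightly cleaner uniformity, since the witness $m$ is read off directly as the halting time of $\psi(j)$ rather than recovered by a search justified a posteriori. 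One cosmetic remark: the canonical name $e_{\infty}$ and the stage $k_{0}$ you fix at the outset are never used in the argument and can be deleted.
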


\begin{proof}
In $(\mathbb{N}^{+},c_{\mathbb{N}^{+}},\mathcal{T}_{\mathbb{N}^{+}},\tau_{\mathbb{N}^{+}})$,
$\infty$ belongs to the effective closure of $\mathbb{N}$, as it
is the limit of the computable sequence $(n)_{n\in\mathbb{N}}$. By
Corollary \ref{prop:Markov COnd N+}, $(\mathbb{N}^{+},c_{\mathbb{N}^{+}},\mathcal{T}_{\mathbb{N}^{+}},\tau_{\mathbb{N}^{+}})$
satisfies the Markov conditions for the sequential closure, and thus
any semi-decidable set that contains $\infty$ must contain some $n\in\mathbb{N}$.
Such a point can automatically be produced by exhaustive search. 
\end{proof}
The WSO principle is a continuity statement about $\mathbb{N}^{+}$.
However, in a computable topological space $(X,\nu,\mathcal{T},\tau)$,
a computable sequence that converges does not always correspond to
a computable map $\mathbb{N}^{+}\rightarrow X$. And thus this statement
cannot automatically be transferred to other topological spaces, without
using further assumptions. In particular, sobriety is an assumption
that allows to transfer the continuity principle from $(\mathbb{N}^{+},c_{\mathbb{N}^{+}},\mathcal{T}_{\mathbb{N}^{+}},\tau_{\mathbb{N}^{+}})$
to other spaces. See Proposition \ref{prop:Comp Seq Normed gives map to Sober space }. 

The Markov conditions play a very similar role as the WSO principle,
except that they are naturally attached to different spaces, and that
they do not necessarily rely on sobriety- although sober spaces provide,
via Theorem \ref{thm: Comp Sober =00003D> Markov cond Normed }, an
important class of example of spaces that satisfy these conditions.

\subsection{Results on the absence of effective discontinuities }
\begin{lem}
\label{lem: No eff disc 1}Suppose that $(X,\nu,\mathcal{T}_{1},\tau_{1})$
is a computable topological space that satisfies a Markov condition
for the effective closure, and that $(Y,\mu,\mathcal{T}_{2},\tau_{2})$
is a computable topological space. 

Then $(\nu,\mu)$-computable functions between $X$ and $Y$ cannot
have effective discontinuities with respect to $(\mathcal{T}_{1},\tau_{1})$
and $(\mathcal{T}_{2},\tau_{2})$. 
\end{lem}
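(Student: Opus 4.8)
The plan is to argue by contradiction directly from the definitions, using the Markov condition on the domain to turn a purported effective discontinuity into a forbidden relative semi-decidability. Suppose $f\colon X\to Y$ is $(\nu,\mu)$-computable and has an effective discontinuity at some point $x$: there is an effective open set $O_2\in\mathcal{T}_2$ with $f(x)\in O_2$, yet $x\in\overline{(f^{-1}(O_2))^{\boldsymbol c}}^{+}$. Write $A=(f^{-1}(O_2))^{\boldsymbol c}$. Since $f(x)\in O_2$, we have $x\notin A$, so the hypotheses of the Markov condition for the effective closure apply and yield $A\rightsquigarrow_\nu\{x\}$, i.e.\ $\{x\}$ is not a $\nu$-semi-decidable subset of $A\cup\{x\}$.

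The crux is then to show that, on the contrary, $\{x\}$ \emph{is} a $\nu$-semi-decidable subset of $A\cup\{x\}$, which gives the contradiction. Here is where computability of $f$ and Malcev's condition on $(\mathcal{T}_2,\tau_2)$ are used. Fix a $\tau_2$-name of $O_2$. Since $f$ is $(\nu,\mu)$-computable, there is a partial computable $F$ with $f(\nu(n))\subseteq\mu(F(n))$ for all $n\in\operatorname{dom}(\nu)$ (taking $f$ single-valued, $F(n)$ is a $\mu$-name of $f(\nu(n))$). Since $O_2$ is an effective open set, by Malcev's condition $O_2$ is $\mu$-semi-decidable, so there is an algorithm that, given a $\mu$-name, halts iff the named point lies in $O_2$. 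Composing, we obtain an algorithm that, on a $\nu$-name $n$ of a point $z\in A\cup\{x\}$, halts iff $f(z)\in O_2$. Now on $A\cup\{x\}$ we have $f(z)\in O_2\iff z\notin A\iff z=x$, since $A=(f^{-1}(O_2))^{\boldsymbol c}$ and $x$ is the only point of $A\cup\{x\}$ not in $A$. Hence this algorithm witnesses that $\{x\}$ is $\nu_{\restriction A\cup\{x\}}$-semi-decidable, i.e.\ a $\nu$-semi-decidable subset of $A\cup\{x\}$ — contradicting $A\rightsquigarrow_\nu\{x\}$.

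I expect the main (and really only) subtlety to be bookkeeping around the restriction numbering $\nu_{\restriction A\cup\{x\}}$ and the definition of ``$\nu$-semi-decidable subset of $A\cup\{x\}$'': one must check that the semi-decision procedure described is required to behave correctly only on $\nu$-names of points of $A\cup\{x\}$, which is exactly what the construction gives, and that no codomain condition beyond $(Y,\mu,\mathcal{T}_2,\tau_2)$ being a computable topological space is needed (the only property of the codomain used is Malcev's condition $\tau_2\le\mu_{SD}$, which is part of Definition \ref{def:MAIN DEF}). Everything else is a direct unwinding of the definition of effective discontinuity and of the Markov condition, so the argument is short once these identifications are made.
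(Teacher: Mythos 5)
Your proposal is correct and follows essentially the same route as the paper's proof: apply the Markov condition to $A=(f^{-1}(O_2))^{\boldsymbol c}$ to get $A\rightsquigarrow_\nu\{x\}$, then derive a contradiction by showing $\{x\}=f^{-1}(O_2)\cap(A\cup\{x\})$ is a $\nu$-semi-decidable subset of $A\cup\{x\}$. The only cosmetic difference is that you unwind explicitly (via composing $F$ with the semi-decision procedure for $O_2$) what the paper obtains by citing Malcev's condition together with Proposition \ref{prop: Semi-decidable set intersected }.
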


\begin{proof}
Suppose that $f:X\rightarrow Y$ is a $(\nu,\mu)$-computable function.
Suppose $f$ has an effective discontinuity at $x\in X$. Then there
exists an effective open set $B\subseteq Y$, such that $f(x)\in B$,
but such that $x$ is effectively adherent to the complement of $f^{-1}(B)$. 

The Markov condition for $(X,\nu,\mathcal{T}_{1},\tau_{1})$ then
assures that $\{x\}$ is not a $\nu$-semi-decidable subset of $\{x\}\cup(f^{-1}(B))^{\boldsymbol{c}}$. 

But $f^{-1}(B)$ is a $\nu$-semi-decidable subset of $X$, and thus
$f^{-1}(B)\cap(\{x\}\cup(f^{-1}(B))^{\boldsymbol{c}})$ is a $\nu$-semi-decidable
subset of $\{x\}\cup(f^{-1}(B))^{\boldsymbol{c}}$ (by Proposition
\ref{prop: Semi-decidable set intersected }). But $f^{-1}(B)\cap(\{x\}\cup(f^{-1}(B))^{\boldsymbol{c}})=\{x\}$.

This is a contradiction, and $f$ cannot have an effective discontinuity. 
\end{proof}
Of course, we also have the corresponding proposition for the sequential
closure and the normed sequential closure, we omit its proof. 
\begin{lem}
\label{lem: No eff disc seq 2}Suppose that $(X,\nu,\mathcal{T}_{1},\tau_{1})$
is a computable topological space that satisfies Markov's condition
for the (normed) effective sequential closure, and that $(Y,\mu,\mathcal{T}_{2},\tau_{2})$
is a computable topological space. 

Then $(\nu,\mu)$-computable functions between $X$ and $Y$ cannot
have (normed) effective sequential discontinuities with respect to
$(\mathcal{T}_{1},\tau_{1})$ and $(\mathcal{T}_{2},\tau_{2})$. 
\end{lem}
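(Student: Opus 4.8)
The plan is to mimic almost verbatim the proof of Lemma~\ref{lem: No eff disc 1}, replacing the effective closure by the (normed) effective sequential closure throughout, and invoking the appropriate Markov condition and the appropriate notion of effective discontinuity. The only thing that genuinely changes is the \emph{witness} that $x$ is adherent to the complement of $f^{-1}(B)$: instead of a program converting $\tau_1$-names of open neighbourhoods of $x$ into $\nu$-names of points of $(f^{-1}(B))^{\boldsymbol c}\cap O$, we now have a $\nu$-computable sequence converging to $x$ and lying in $(f^{-1}(B))^{\boldsymbol c}$ (respectively, such a sequence together with a norming algorithm). But this distinction is invisible to the rest of the argument, because in every case the conclusion drawn from the Markov condition is the same purely combinatorial statement: $(f^{-1}(B))^{\boldsymbol c}\rightsquigarrow_\nu\{x\}$, i.e.\ $\{x\}$ is not a $\nu$-semi-decidable subset of $\{x\}\cup(f^{-1}(B))^{\boldsymbol c}$.

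Concretely I would argue as follows. Suppose $f:X\to Y$ is $(\nu,\mu)$-computable and has a (normed) effective sequential discontinuity at $x$; unwinding the definition, there is an effective open set $B\in\mathcal T_2$ with $f(x)\in B$ and $x\in\overline{(f^{-1}(B))^{\boldsymbol c}}^{+\mathrm{seq}}$ (resp.\ $\overline{(f^{-1}(B))^{\boldsymbol c}}^{+\mathrm{seq},N}$). In particular $x\notin(f^{-1}(B))^{\boldsymbol c}$, since $f(x)\in B$. Now the Markov condition for the (normed) effective sequential closure, applied with $A=(f^{-1}(B))^{\boldsymbol c}$, yields $(f^{-1}(B))^{\boldsymbol c}\rightsquigarrow_\nu\{x\}$, that is, $\{x\}$ is not $\nu$-semi-decidable as a subset of $\{x\}\cup(f^{-1}(B))^{\boldsymbol c}$. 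On the other hand $f^{-1}(B)$ is $\nu$-semi-decidable in $X$ (preimage of an effective open set under a computable map), so by Proposition~\ref{prop: Semi-decidable set intersected} its intersection with $\{x\}\cup(f^{-1}(B))^{\boldsymbol c}$ is $\nu$-semi-decidable in that set; but that intersection is exactly $\{x\}$, a contradiction. Hence no such discontinuity exists.

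I do not expect any real obstacle here: the lemma is a corollary of the same two ingredients used for Lemma~\ref{lem: No eff disc 1} (a Markov condition on the domain, plus semi-decidability of preimages of effective opens), and the inclusions $\overline A^{+\mathrm{seq},N}\subseteq\overline A^{+\mathrm{seq}}\subseteq\overline A^{+}$ mean the hypotheses of the three variants are progressively weaker while the proof is literally identical. The only mild care needed is bookkeeping: one should state ``normed'' uniformly in the hypothesis, the closure, and the conclusion, and note that the specific form of the adherence witness (a computable sequence, resp.\ a normed computable sequence) is never used — it is fully absorbed into the hypothesis ``$X$ satisfies the (normed) effective sequential Markov condition'', which is exactly why the authors feel entitled to omit the proof.
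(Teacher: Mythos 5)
Your proposal is correct and is exactly the argument the paper intends: it omits the proof precisely because it is the proof of Lemma \ref{lem: No eff disc 1} verbatim, with the Markov condition and the adherence witness swapped for their (normed) sequential counterparts, the form of the witness being absorbed entirely into the hypothesis. Your bookkeeping remarks (in particular that $x\notin(f^{-1}(B))^{\boldsymbol c}$ follows from $f(x)\in B$, and that the contradiction comes from Proposition \ref{prop: Semi-decidable set intersected }) match the paper's reasoning exactly.
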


The hypotheses of this result are weaker, but the result is also weaker:
we prevent less effective discontinuities.

\subsection{Identification of closure and effective closure }

We will now transform the results of absence of effective discontinuities
of Lemma \ref{lem: No eff disc 1} and Lemma \ref{lem: No eff disc seq 2}
into continuity results. 

The known continuity results often use either the effective axiom
of choice of Moschovakis, the stronger hypothesis of existence of
a computable and dense sequence in the domain of the considered function,
or the witness of non-inclusion condition of Spreen. 

We express a different condition here, which asks that the effective
closure of a semi-decidable set correspond to its closure. 
\begin{defn}
Say that a computable topological space $(X,\nu,\mathcal{T},\tau)$
satisfies an \emph{identification condition between the closure and
the effective closure for semi-decidable sets }if for any $\nu$-semi-decidable
set $A$, we have 
\[
\overline{A}=\overline{A}^{+}.
\]
\end{defn}

We define similarly the identification condition between the closure
and the effective (resp. normed effective) sequential closure for
semi-decidable sets, by asking respectively that 
\[
\overline{A}=\overline{A}^{+\text{seq}}
\]
or that 
\[
\overline{A}=\overline{A}^{+\text{seq},N}
\]
 holds for all semi-decidable sets. 

To show that the identification conditions given above are more general
than asking effective separability, we need to generalize a lemma
of Moschovakis. 
\begin{lem}
[Moschovakis, \cite{Moschovakis1964}]\label{lem:sd meet dense moscho}In
a recursive metric space that admits a passage to the limit algorithm,
any dense and computable sequence meets every non-empty semi-decidable
set. 
\end{lem}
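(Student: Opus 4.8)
The plan is to argue by contradiction: suppose $(X,\nu,d)$ is a recursive metric space with a passage to the limit algorithm, $(q_n)_{n\in\mathbb{N}}$ is a dense computable sequence, and $A$ is a non-empty $\nu$-semi-decidable set that contains no $q_n$. Since $A$ is non-empty, fix some point $a\in A$; since the sequence is dense, there are indices $n$ with $d(q_n,a)$ arbitrarily small, and hence one can search for, and exhibit, a computable subsequence $(q_{n_k})_{k\in\mathbb{N}}$ converging to $a$ at exponential speed. The idea is then to mimic the construction in the proof of Proposition \ref{prop:RMS =00003D> Markov cond}: for each index $p$, build a Cauchy sequence $(w^p_k)_{k\in\mathbb{N}}$ which agrees with $(q_{n_k})$ until the computation $\varphi_p(p)$ halts (if ever), and is eventually constant equal to some $q_{n_k}$ once it does. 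Applying the passage to the limit algorithm to each $(w^p_k)_k$ produces a computable sequence $(w_p)_{p\in\mathbb{N}}$ with $w_p = a$ exactly when $\varphi_p(p)\uparrow$, and $w_p = q_{n_k}\in A$ (for some $k$) when $\varphi_p(p)\downarrow$; either way $w_p$ is always a point for which we can semi-decide membership in $A$.

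The contradiction is then extracted by running the semi-decision procedure for $A$ on (a $\nu$-name of) $w_p$: if $\varphi_p(p)\uparrow$ then $w_p = a \in A$, while if $\varphi_p(p)\downarrow$ then $w_p$ is one of the $q_{n_k}$, which by assumption is not in $A$. So the semi-decision procedure for $A$, applied to the computable sequence $(w_p)_p$, halts on $p$ exactly when $\varphi_p(p)\uparrow$ — but this would make the complement of the diagonal halting set semi-decidable, a contradiction. Equivalently, in the notation of the paper: $a$ is effectively adherent to the set $\{q_{n_k} : k\in\mathbb{N}\}$, which (by density) does not contain $a$, so by the Markov condition for the effective sequential closure (Proposition \ref{prop:RMS =00003D> Markov cond}) the set $\{a\}$ is not semi-decidable inside $\{a\}\cup\{q_{n_k}:k\in\mathbb{N}\}$; but $A$ being semi-decidable and containing $a$ but none of the $q_{n_k}$ would make $\{a\}$ exactly $A\cap(\{a\}\cup\{q_{n_k}:k\})$, hence semi-decidable there — contradiction.

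I expect the main obstacle to be the first step: passing from ``$a\in A$ and the sequence is dense'' to ``there is a \emph{computable} sequence extracted from $(q_n)$ converging to $a$ fast.'' Density only guarantees that suitable indices exist, not that they can be found effectively; but since $a$ is itself computable (it has a $\nu$-name, and $d$ is $(\nu\times\nu,c_\mathbb{R})$-computable) one can compute $d(q_n,a)$ to any precision and search for indices $n_k$ with $d(q_{n_k},a)<2^{-k}$, which is the standard way this is done in Moschovakis' setting. Once this extraction is in place, the rest is essentially a re-packaging of the argument already given for Proposition \ref{prop:RMS =00003D> Markov cond}, so I would either cite that proof directly or invoke the Markov condition it establishes as a black box, keeping the present proof short.
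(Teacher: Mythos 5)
Your proof is correct and follows essentially the same route as the paper: the paper obtains this lemma from its generalization, whose proof likewise shows that a point $a$ of the non-empty semi-decidable set is effectively adherent to the dense computable sequence and then invokes the Markov condition supplied by Proposition \ref{prop:RMS =00003D> Markov cond} to contradict the semi-decidability of $A$. The only cosmetic differences are that you pass through a fast-converging computable subsequence (sequential closure) where the paper searches for sequence points directly inside effective open sets (effective closure) --- your extraction step is indeed unproblematic since every point of a recursive metric space is $\nu$-computable and $d$ is computable --- and that the reason $a$ is not among the $q_{n_k}$ is the contradiction hypothesis (the sequence misses $A$ while $a\in A$), not density.
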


Just like the continuity theorems, this result shows that the Ershov
topology and the metric topology are closely related, even though
they can be different.

We generalize it as follows. 
\begin{lem}
In a computable topological space $(X,\nu,\mathcal{T},\tau)$ that
satisfies the Markov condition for the effective closure and that
has a dense and computable sequence, any dense and computable sequence
meets every non-empty semi-decidable set. 
\end{lem}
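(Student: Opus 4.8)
The plan is to argue by contradiction using the Markov condition to convert the existence of a ``missed'' semi-decidable set into a forbidden separation. Let $(x_n)_{n\in\mathbb N}$ be a dense and computable sequence, and suppose there is a non-empty $\nu$-semi-decidable set $A$ with $x_n\notin A$ for all $n$. The key observation is that, since $A$ is non-empty, pick a point $a\in A$; then I claim $a$ is \emph{effectively adherent} to the set $S=\{x_n : n\in\mathbb N\}$. Indeed, given the $\tau$-name of an effective open set $O$ containing $a$, since $S$ is dense in $X$ the set $O\cap S$ is non-empty, and since $O$ is $\nu$-semi-decidable and $(x_n)$ is a computable sequence one can search through the $x_n$ until one is found lying in $O$; this produces a $\nu$-name of a point of $O\cap S$, with $S$ playing the role of the set $A$ in the definition of effective adherence. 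Hence $a\in\overline{S}^{+}$.

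Now apply the Markov condition for the effective closure with the point $a$ and the set $S$: since $a\notin S$ (because $a\in A$ and $S\cap A=\emptyset$) and $a\in\overline{S}^{+}$, we conclude $S\rightsquigarrow_\nu\{a\}$, i.e. $\{a\}$ is not a $\nu$-semi-decidable subset of $S\cup\{a\}$. But I will derive the opposite: $A$ is a $\nu$-semi-decidable subset of $X$, hence by Proposition \ref{prop: Semi-decidable set intersected } the set $A\cap(S\cup\{a\})$ is a $\nu$-semi-decidable subset of $S\cup\{a\}$; and since $S\cap A=\emptyset$ while $a\in A$, this intersection is exactly $\{a\}$. So $\{a\}$ \emph{is} a $\nu$-semi-decidable subset of $S\cup\{a\}$, contradicting $S\rightsquigarrow_\nu\{a\}$. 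Therefore no such $A$ exists, i.e. every non-empty $\nu$-semi-decidable set is met by every dense and computable sequence.

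The only delicate point is the verification that $a\in\overline{S}^{+}$, and in particular that the enumeration witnessing $O\cap S\neq\emptyset$ can be done effectively and uniformly in the $\tau$-name of $O$: this uses that effective open sets are uniformly $\nu$-semi-decidable (Malcev's condition, $\tau\le\nu_{SD}$) together with computability of the sequence $(x_n)$, so that from a $\tau$-name of $O$ one obtains a semi-decision procedure for membership in $O$ and then dovetails it against the $x_n$. Density of $S$ guarantees the search terminates. I expect this to be the main (though still routine) obstacle; everything else is a direct application of the Markov condition and Proposition \ref{prop: Semi-decidable set intersected }. Note that this argument indeed generalizes Lemma \ref{lem:sd meet dense moscho}, since by Proposition \ref{prop:RMS =00003D> Markov cond} a recursive metric space with passage to the limit satisfies the Markov condition for the effective closure.
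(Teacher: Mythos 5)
Your proposal is correct and follows essentially the same route as the paper: show that a point $a$ of the semi-decidable set is effectively adherent to the dense computable sequence (using Malcev's condition to search the sequence inside a given effective open set), apply the Markov condition to get $S\rightsquigarrow_\nu\{a\}$, and contradict this with the semi-decidability of $A$ via Proposition \ref{prop: Semi-decidable set intersected }. Your contradiction setup even handles slightly more carefully than the paper the hypothesis $a\notin S$ needed to invoke the Markov condition.
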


\begin{proof}
Let $A$ be a non-empty semi-decidable subset of $X$. Denote by $(x_{n})_{n\in\mathbb{N}}$
a dense and computable sequence of $X$. Let $a$ be a point of $A$.
The point $a$ belongs to the effective closure of the sequence $(x_{n})_{n\in\mathbb{N}}$
if, given an open set $O$ that contains $a$, it is possible to compute
a point of $(x_{n})_{n\in\mathbb{N}}$ in $O$. But this is obviously
the case, precisely because $(x_{n})_{n\in\mathbb{N}}$ is a dense
and computable sequence. Thus we can apply Markov's condition: $(x_{n})_{n\in\mathbb{N}}\rightsquigarrow_{\nu}\{a\}$.
Thus there cannot exist a semi-decidable subset of $X$ that contains
$a$ but does not meet the sequence $(x_{n})_{n\in\mathbb{N}}$, in
particular $(x_{n})_{n\in\mathbb{N}}\cap A\ne\emptyset$. 
\end{proof}
As a corollary, we obtain the following result, which shows that our
results do generalize the abstract continuity theorems on effective
Polish spaces. 
\begin{cor}
Suppose that $(X,\nu,\mathcal{T},\tau)$ is a computable topological
space that satisfies the Markov condition for the effective closure
and that has a dense and computable sequence. Then it satisfies the
identification condition between the closure and effective closure
for semi-decidable sets. 
\end{cor}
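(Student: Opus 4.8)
The plan is to show that the identification condition $\overline{A}=\overline{A}^{+}$ holds for every $\nu$-semi-decidable set $A$, by verifying both inclusions. The inclusion $\overline{A}^{+}\subseteq\overline{A}$ is one of the elementary inclusions already recorded in the excerpt (Proposition following Definition of $\overline{A}^{+}$: $A\subseteq\overline{A}^{+\text{seq},N}\subseteq\overline{A}^{+\text{seq}}\subseteq\overline{A}^{+}\subseteq\overline{A}$), so it requires no work here. The content is entirely in the reverse inclusion $\overline{A}\subseteq\overline{A}^{+}$, and this is where I would invoke the immediately preceding lemma — the generalization of Moschovakis' lemma — together with the hypothesis that $X$ has a dense and computable sequence.

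Concretely, first I would fix a dense and computable sequence $(x_n)_{n\in\mathbb{N}}$ of $X$, which exists by hypothesis. Let $A$ be a $\nu$-semi-decidable subset of $X$ and let $x\in\overline{A}$; the goal is to produce a program witnessing $x\in\overline{A}^{+}$, i.e. an algorithm that, given the $\tau$-name of an open set $O$ containing $x$, outputs the $\nu$-name of a point of $A\cap O$. The key observation is that for any effective open set $O$ with $x\in O$, the set $A\cap O$ is again $\nu$-semi-decidable (finite intersections, and in particular intersection with an effective open set whose defining semi-decision procedure is available from the $\tau$-name, are computable operations — Malcev's condition $\tau\le\nu_{SD}$ plus closure of semi-decidable sets under intersection), and it is non-empty because $x\in\overline{A}$ and $O$ is an open neighborhood of $x$. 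Then I apply the generalized Moschovakis lemma proved just above: in a computable topological space satisfying the Markov condition for the effective closure and possessing a dense computable sequence, any dense computable sequence meets every non-empty semi-decidable set. Hence $(x_n)_{n\in\mathbb{N}}\cap(A\cap O)\neq\emptyset$, so an exhaustive search — enumerating $n$, computing $x_n$, and semi-deciding membership of $x_n$ in $A\cap O$ — is guaranteed to halt and return a point of $A\cap O$. This exhibits the required algorithm uniformly in the $\tau$-name of $O$, so $x\in\overline{A}^{+}$.

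The main obstacle, such as it is, is making sure the search described above is genuinely effective: one needs that from a $\tau$-name of $O$ one can effectively obtain a semi-decision procedure for $A\cap O$, which follows from Malcev's condition ($\tau\le\nu_{SD}$) and the fact that $A$ comes with a $\nu_{SD}$-name, together with the standard closure of semi-decidable sets under intersection; and one needs the dense sequence $(x_n)$ to be genuinely $\nu$-computable so that each $x_n$ has a computable $\nu$-name on which the semi-decision procedure can be run. All of this is routine given the definitions in Section \ref{part: Preliminaries}. The only real input is the generalized Moschovakis lemma, which is exactly what guarantees non-emptiness of the intersection of the dense sequence with $A\cap O$ and thus termination of the search; without the Markov condition (which is a hypothesis) that lemma would fail, so the hypothesis is used precisely there and nowhere else.
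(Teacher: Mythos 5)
Your proof is correct and follows exactly the paper's argument: given an effective open neighborhood $O$ of an adherent point $x$, observe that $A\cap O$ is a non-empty semi-decidable set, apply the generalized Moschovakis lemma to conclude that the dense computable sequence meets it, and find a witness by exhaustive search. The only difference is that you spell out the effectivity details (Malcev's condition, the intersection being uniformly semi-decidable) which the paper leaves implicit.
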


\begin{proof}
Let $A$ be a semi-decidable subset of $X$, and $x$ be adherent
to $A$. We show that it is effectively adherent to $A$. Let $B$
be a given open neighborhood of $x$. It must meet $A$, so the intersection
$A\cap B$ is a non-empty semi-decidable set, and thus by the previous
lemma it contains a point of the computable and dense sequence on
$X$. Such a point can be found by exhaustive search. 
\end{proof}
To establish the same for the (normed) sequential closure, we need
to ask that points have c.e. bases of neighborhoods. We do not write
it down. 

The identification conditions are expressed for semi-decidable sets.
We could also ask a stronger condition: that closure and effective
closure agree for all sets. This is however not a reasonable requirement,
as it is to restrictive, even if we ask it only for co-semi-decidable
sets. We prove this now. 
\begin{prop}
\label{prop: Identification condition co-semi-decidable sets}Let
$(X,\nu,\mathcal{T},\tau)$ be a computable topological space that
satisfies a Markov condition for the effective closure. Suppose that
for each co-semi-decidable subset $C$ of $X$, $\overline{C}=\overline{C}^{+}$.
Then $\mathcal{T}$ is the abstract Ershov topology on $(X,\nu)$
(i.e. the abstract topology generated by semi-decidable sets).
\end{prop}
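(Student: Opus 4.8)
The plan is to show that every semi-decidable set is open in $\mathcal{T}$, which together with Malcev's condition (effective open sets are semi-decidable) gives that $\mathcal{T}$ is exactly the abstract Ershov topology. So let $A$ be a $\nu$-semi-decidable subset of $X$; its complement $C = A^c$ is co-semi-decidable. I would argue by contradiction: suppose $A$ is not open in $\mathcal{T}$. Then there is a point $x \in A$ which is adherent to $C$, i.e. $x \in \overline{C}$, since a set is open iff no point of it is a limit point of its complement. By the hypothesis that $\overline{C} = \overline{C}^{+}$ for co-semi-decidable $C$, we get $x \in \overline{C}^{+}$, i.e. $x$ is effectively adherent to $C$. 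Since $x \notin C$ (as $x \in A$), the Markov condition for the effective closure applies and yields $C \rightsquigarrow_{\nu} \{x\}$, that is, $\{x\}$ is not a $\nu$-semi-decidable subset of $C \cup \{x\}$.

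The contradiction then comes from the fact that $A$ itself is $\nu$-semi-decidable. Indeed, $C \cup \{x\} = A^c \cup \{x\}$, and $A \cap (A^c \cup \{x\}) = \{x\}$; since $A$ is $\nu$-semi-decidable in $X$, its trace $\{x\}$ on $A^c \cup \{x\}$ is $\nu_{|A^c \cup \{x\}}$-semi-decidable (this is Proposition \ref{prop: Semi-decidable set intersected}, exactly as used in the proof of Lemma \ref{lem: No eff disc 1}). This contradicts $C \rightsquigarrow_{\nu} \{x\}$. Hence $A$ must be open in $\mathcal{T}$. Conversely, every effective open set is $\nu$-semi-decidable by Malcev's condition, so the semi-decidable sets are precisely the open sets of $\mathcal{T}$; one then notes that a topology is determined by its collection of open sets, so $\mathcal{T}$ is the abstract topology generated by (equivalently, consisting of) semi-decidable sets, i.e. the abstract Ershov topology.

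The argument is essentially a repackaging of the proof of Lemma \ref{lem: No eff disc 1} (and of the finite-set case in Lemma \ref{lem:Markov on finset}), with the identification condition for co-semi-decidable sets playing the role that effective discontinuity played there: it is precisely what upgrades the topological statement $x \in \overline{C}$ to the effective statement $x \in \overline{C}^{+}$ so that the Markov condition can fire. I expect the only subtle point to be the bookkeeping around restricted numberings — making sure that "$A$ is $\nu$-semi-decidable in $X$" really does give "$\{x\}$ is $\nu_{|C\cup\{x\}}$-semi-decidable" — but this is handled cleanly by Proposition \ref{prop: Semi-decidable set intersected}, so there is no genuine obstacle. (Note the proposition as stated concludes only that $\mathcal{T}$ equals the abstract Ershov topology, not that $\tau \equiv \nu_{SD}$; the stronger statement about numberings would need an argument like Lemma \ref{lem:eff top given by top finset}, which is only available in the finite case, so I would not attempt it here.)
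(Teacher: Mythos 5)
Your proof is correct and follows essentially the same route as the paper: assume a semi-decidable $A$ is not open, find $x\in A$ adherent to $A^{c}$, upgrade to effective adherence via the identification hypothesis for the co-semi-decidable set $A^{c}$, apply the Markov condition to get $A^{c}\rightsquigarrow_{\nu}\{x\}$, and contradict the semi-decidability of $A$ via Proposition \ref{prop: Semi-decidable set intersected }. Your closing remarks (spelling out why the conclusion is only about the abstract topology $\mathcal{T}$ and not about $\tau\equiv\nu_{SD}$) are accurate and slightly more explicit than the paper's own proof, which simply notes the similarity with Lemma \ref{lem:Markov on finset}.
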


\begin{proof}
The proof is similar to that of Lemma \ref{lem:Markov on finset}.
Let $A$ be a $\nu$-semi-decidable set. Suppose that $A$ is not
open for $\mathcal{T}$. Then there is $x$ in $A$ such that $x\in\overline{A^{c}}$.
This implies by hypothesis that $x\in\overline{A^{c}}^{+}$, and thus
that $A^{c}\rightsquigarrow_{\nu}\{x\}$. But this contradicts the
fact that $A$ is semi-decidable. Thus $A$ is open. 
\end{proof}

\subsection{\label{subsec:Multi-numbering-of-neighborhood bases }Multi-numbering
of neighborhood filters and neighborhood bases }

Here we define what it means ``having an effective neighborhood basis
of co-semi-decidable sets'', starting with the multi-numbering of
neighborhoods. 

If $(X,\mathcal{T})$ is a topological space, and $x\in X$, denote
by $\mathcal{N}_{x}$ the neighborhood filter of $x$. 
\begin{defn}
Let $(X,\nu,\mathcal{T},\tau)$ be a computable topological space.
For each point $x$ of $X$, define a multi-numbering $\mathfrak{n}_{x}:\mathbb{N}\rightrightarrows\mathcal{N}_{x}$
of neighborhoods of $x$ by 
\[
\text{dom}(\mathfrak{n}_{x})=\{n\in\text{dom}(\tau),\,x\in\tau(n)\};
\]
\[
\forall n\in\text{dom}(\mathfrak{n}_{x}),\,\mathfrak{n}_{x}(n)=\{A\subseteq X,\,\tau(n)\subseteq A\}.
\]
\end{defn}

Thus a neighborhood $A$ of $x$ is described by an open set that
contains $x$ and that is contained in $A$. Of course, this does
not define $A$ uniquely, and $\mathfrak{n}_{x}$ is a multi-numbering.
All neighborhoods of $x$ have a description for $\mathfrak{n}_{x}$. 

The fact that the set $\mathcal{N}_{x}$ is a filter is mirrored in
the properties of $\mathfrak{n}_{x}$ in the following way: 
\begin{itemize}
\item A filter on $X$ always contains $X$, and indeed for all $n$ in
the domain of $\mathfrak{n}_{x}$, $X\in\mathfrak{n}_{x}(n)$. 
\item A filter is stable under finite intersections, and the intersection
map is $(\mathfrak{n}_{x}\times\mathfrak{n}_{x},\mathfrak{n}_{x})$-computable
for neighborhoods of $x$. 
\item A filter is stable under supersets, and here, if $n$ is a $\mathfrak{n}_{x}$-name
of a neighborhood $A$ of $x$ and if $B$ is any subset of $X$ such
that $A\subseteq B$, then in fact $n$ is also a $\mathfrak{n}_{x}$-name
of $B$. 
\end{itemize}
Note that this last point provides a nice solution to the problem
of rendering effective the statement, for a filter $\mathcal{F}$
of $X$: 
\[
\forall A\in\mathcal{F},\,\forall B\subseteq X,\,A\subseteq B\implies B\in\mathcal{F}.
\]
Indeed, when defining a computable filter $\mathcal{F}$, we can expect
that a numbering will be associated to $\mathcal{F}$, that gives
descriptions of the elements of $\mathcal{F}$, but we cannot expect
to have a canonical numbering of all subsets of $X$, this makes it
hard to find an effective statement associated to the statement above. 

The use of the multi-numberings $\mathfrak{n}_{x}$ in fact trivializes
the statement highlighted above. 

Classically, a neighborhood basis for a point $x$ is a set $\mathcal{B}_{x}\subseteq\mathcal{P}(X)$
such that for any neighborhood $A$ of $x$, there exists $B$ in
$\mathcal{B}_{x}$ such that $B\subseteq A$. 
\begin{defn}
\emph{\label{def: Effective-neighborhood-basis }An effective neighborhood
basis }for a point $x$ is a pair $(\mathcal{B}_{x},\beta_{x})$,
where $\mathcal{B}_{x}$ is a neighborhood basis of $x$, and $\beta_{x}$
is a numbering of $\mathcal{B}_{x}$, such that there is a procedure
that, given the $\mathfrak{n}_{x}$-name of a neighborhood $A$ of
$x$, produces the $\beta_{x}$-name of an element $B$ with $B\subseteq A$. 
\end{defn}

Note that the $\mathfrak{n}_{x}$-name of $A$ does not characterize
it uniquely, so the element $B$ in the definition above should in
fact be contained in all neighborhoods of $x$ that are described
by this name. 

We could relax the above definition to allow $\beta_{x}$ to be a
multi-numbering, but we will not need this more general definition
in the present article. 
\begin{example}
The set of effective open sets that contain $x$ together with the
numbering $\tau$ trivially form an effective neighborhood basis of
$x$: if $n$ is a $\mathfrak{n}_{x}$-name of a neighborhood $A$,
then $n$ is in fact the $\tau$-name of an open set contained in
$A$, so the procedure required for Definition \ref{def: Effective-neighborhood-basis }
is just the identity. 
\end{example}

We can now finally state the definition that will allow us to obtain
continuity results: 
\begin{defn}
A point $x$ in a computable topological space $(X,\nu,\mathcal{T},\tau)$
has \emph{an effective neighborhood basis of co-semi-decidable sets
}if there exists an effective neighborhood basis $(\mathcal{B}_{x},\beta_{x})$
for $x$ such that for all $n$ in $\text{dom}(\beta_{x})$, $\beta_{x}(n)$
is a co-semi-decidable set. 

A point $x$ in a computable topological space $(X,\nu,\mathcal{T},\tau)$
has \emph{a neighborhood basis of uniformly co-semi-decidable sets
}if there exists an effective neighborhood basis $(\mathcal{B}_{x},\beta_{x})$
for $x$ such that $\beta_{x}\le\nu_{coSD}$. 
\end{defn}

Finally, we define neighborhood bases that are uniform in $x$. 
\begin{defn}
Points in $X$ \emph{uniformly have bases of uniformly co-semi-decidable
sets} if each computable point $x$ admits a basis $(\mathcal{B}_{x},\beta_{x})$
of uniformly co-semi-decidable sets, and if the two procedures associated
to $(\mathcal{B}_{x},\beta_{x})$ depend recursively on the parameter
$x$: 
\begin{itemize}
\item There is a procedure that takes as input the $\nu$-name of a point
$x$ and the $\mathfrak{n}_{x}$-name of a neighborhood $A$ of $x$,
and produces the $\beta_{x}$-name of a set $B\in\mathcal{B}_{x}$
with $B\subseteq A$;
\item There is a procedure that takes as input the $\nu$-name of a point
$x$ and the $\beta_{x}$-name of a set $B\in\mathcal{B}_{x}$, and
produces a $\nu_{coSD}$-name for $B$. 
\end{itemize}
\end{defn}

It is easy to see that in recursive metric spaces,\emph{ }points\emph{
}uniformly have bases of uniformly co-semi-decidable sets: closed
balls are co-semi-decidable sets. 

\bigskip

\subsection{Continuity results }

The identification conditions are used for the following lemma: 
\begin{lem}
\label{lem: Disc + identification -> eff disc }Suppose that $(X,\nu,\mathcal{T}_{1},\tau_{1})$
is a computable topological space that satisfies the identification
condition between the closure and the effective closure (resp. (normed)
effective sequential closure) for semi-decidable sets, and that $(Y,\mu,\mathcal{T}_{2},\tau_{2})$
is a computable topological space where points have co-semi-decidable
bases of neighborhood. 

If a $(\nu,\mu)$-computable function $f:X\rightarrow Y$ has a discontinuity
at a point, it also has an effective discontinuity at this point (resp.
a (normed) effective sequential discontinuity). 
\end{lem}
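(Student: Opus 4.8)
The plan is to unwind the definition of a discontinuity at a point, pass to the complement in the codomain, pull back along $f$, and then invoke the identification condition on the domain. Suppose $f:X\to Y$ is $(\nu,\mu)$-computable and has a (classical) discontinuity at a point $x$. By definition this means there is an open neighborhood $V$ of $f(x)$ in $Y$ whose preimage $f^{-1}(V)$ is not a neighborhood of $x$, equivalently $x\in\overline{(f^{-1}(V))^{\boldsymbol c}}$. The subtlety is that $V$ need not be an \emph{effective} open set, so I cannot directly use it to define an effective discontinuity; this is where the codomain hypothesis enters.

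First I would use the assumption that points of $Y$ have co-semi-decidable bases of neighborhood: applied to $f(x)\in V$, there is a basis element $C\in\mathcal{B}_{f(x)}$ with $f(x)\in C\subseteq V$ and $C$ co-semi-decidable, hence (shrinking further if necessary to an effective open set inside the interior of $C$, or directly using that basis elements come with an effective open interior containing $f(x)$) one extracts an effective open set $O_2$ with $f(x)\in O_2\subseteq C\subseteq V$. Actually the cleanest route: the basis element $C$ is co-semi-decidable and is a neighborhood of $f(x)$, so it contains an effective open $O_2$ with $f(x)\in O_2$; but what matters for the complement argument is that we can take $C$ itself co-semi-decidable with $O_2\subseteq C\subseteq V$. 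Then $(f^{-1}(C))^{\boldsymbol c}=f^{-1}(C^{\boldsymbol c})$ is $\nu$-semi-decidable because $C^{\boldsymbol c}$ is semi-decidable in $Y$ and $f$ is computable. Moreover $(f^{-1}(V))^{\boldsymbol c}\subseteq (f^{-1}(C))^{\boldsymbol c}$, so $x\in\overline{(f^{-1}(C))^{\boldsymbol c}}$ as well.

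Now apply the identification condition for semi-decidable sets on $X$ to the semi-decidable set $A:=(f^{-1}(C))^{\boldsymbol c}$: since $x\in\overline{A}$, we get $x\in\overline{A}^{+}$, i.e. $x$ is effectively adherent to $(f^{-1}(C))^{\boldsymbol c}$. Finally, choosing the effective open set $O_2$ from the basis element (with $f(x)\in O_2\subseteq C$, so $f^{-1}(O_2)\subseteq f^{-1}(C)$ and hence $(f^{-1}(C))^{\boldsymbol c}\subseteq (f^{-1}(O_2))^{\boldsymbol c}$), one transfers effective adherence to the larger semi-decidable set $(f^{-1}(O_2))^{\boldsymbol c}$ — this step uses that effective adherence to a subset implies effective adherence to a superset, which is immediate from the definition since a point produced in $O\cap A$ lies a fortiori in $O\cap A'$ for $A\subseteq A'$. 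Thus $x\in f^{-1}(O_2)\cap\overline{(f^{-1}(O_2))^{\boldsymbol c}}^{+}$ with $O_2$ effective open, which is exactly an effective discontinuity at $x$. For the sequential and normed sequential versions, one runs the same argument but feeds the semi-decidable set $A$ into the corresponding identification condition ($\overline{A}=\overline{A}^{+\mathrm{seq}}$ or $\overline{A}=\overline{A}^{+\mathrm{seq},N}$), and observes that a computable (normed) sequence in $A$ is a fortiori a computable (normed) sequence in any superset $A'\supseteq A$, so the transfer to $(f^{-1}(O_2))^{\boldsymbol c}$ goes through verbatim.

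The main obstacle is the bookkeeping around the codomain basis: one must be careful to produce from the neighborhood $V$ of $f(x)$ \emph{both} a co-semi-decidable set $C$ (so that its preimage-complement is semi-decidable, enabling the identification condition) \emph{and} an effective open $O_2\subseteq C$ containing $f(x)$ (so that the resulting data genuinely matches the definition of an effective discontinuity, which requires an effective open set). The definition of "effective neighborhood basis of co-semi-decidable sets" supplies the co-semi-decidable $C$; the effective open $O_2$ inside it is obtained because $C$, being a neighborhood of $f(x)$, has an effective open subset containing $f(x)$ by definition of the topology — and no computability of this extraction is needed since the lemma is non-uniform. Everything else is a routine chain of "subset implies superset" monotonicities for the three flavors of effective closure.
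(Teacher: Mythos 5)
Your proof is correct and follows essentially the same route as the paper's: shrink the neighborhood of $f(x)$ to a co-semi-decidable basis element $C$, observe that $(f^{-1}(C))^{\boldsymbol{c}}$ is semi-decidable, and apply the identification condition. Your explicit final step --- extracting an effective open $O_{2}\subseteq C$ containing $f(x)$ and transferring effective adherence to the superset $(f^{-1}(O_{2}))^{\boldsymbol{c}}$ --- is a detail the paper leaves implicit, and it is indeed needed to match the definition of an effective discontinuity, which requires an effective open set.
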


\begin{proof}
Suppose $f$ has a discontinuity at a point $x$, i.e. we have an
open set $B$ of $Y$ which contains $f(x)$, but such that $x$ is
adherent to $f^{-1}(B)^{\boldsymbol{c}}$. Using the hypothesis on
$Y$, we can replace $B$ by a smaller set $C$, which is also a neighborhood
of $f(x)$, but which is co-semi-decidable. 

Then we also have that $x$ belongs to the closure of $f^{-1}(C)^{\boldsymbol{c}}$,
and since this set is semi-decidable, we can apply the identification
hypothesis, which gives the desired effective discontinuity. 
\end{proof}
We finally put together the non-effective discontinuity results with
the identification conditions to obtain abstract continuity results. 
\begin{thm}
[Continuity result for the Markov condition for effective closure]\label{thm:Continuity-result-eff-closure}Suppose
that $(X,\nu,\mathcal{T}_{1},\tau_{1})$ is a computable topological
space that satisfies Markov's condition for the effective closure
and the identification condition between the closure and the effective
closure for semi-decidable sets. 

Suppose that $(Y,\mu,\mathcal{T}_{2},\tau_{2})$ is a computable topological
space where points have co-semi-decidable bases of neighborhood. 

Then $(\nu,\mu)$-computable functions between $X$ and $Y$ are $(\mathcal{T}_{1},\mathcal{T}_{2})$-continuous. 
\end{thm}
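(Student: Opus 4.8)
The plan is to obtain the theorem as a straightforward combination of the two preceding lemmas, Lemma \ref{lem: No eff disc 1} and Lemma \ref{lem: Disc + identification -> eff disc }, via a proof by contradiction. So suppose $f:X\rightarrow Y$ is $(\nu,\mu)$-computable but not $(\mathcal{T}_{1},\mathcal{T}_{2})$-continuous. Then there is a point $x\in X$ at which $f$ is discontinuous, i.e.\ (using the positive reformulation of discontinuity fixed in Section \ref{subsec:notions-of-Eff-Closure}--\ref{subsec:Markov-Conditions}) there is an open set $B\in\mathcal{T}_{2}$ with $f(x)\in B$ such that $x$ is adherent to $\bigl(f^{-1}(B)\bigr)^{\boldsymbol{c}}$.

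Next I would feed this discontinuity into Lemma \ref{lem: Disc + identification -> eff disc }. Its hypotheses are exactly those assumed here: $(X,\nu,\mathcal{T}_{1},\tau_{1})$ satisfies the identification condition between closure and effective closure for semi-decidable sets, and points of $(Y,\mu,\mathcal{T}_{2},\tau_{2})$ have co-semi-decidable neighborhood bases. The mechanism inside that lemma is the one I would recall in one line: shrink $B$ to a smaller co-semi-decidable neighborhood $C$ of $f(x)$, so that $f^{-1}(C)$ is $\nu$-semi-decidable and hence $\bigl(f^{-1}(C)\bigr)^{\boldsymbol{c}}=f^{-1}(C^{\boldsymbol c})$ is $\nu$-semi-decidable; since $x$ still lies in the closure of this semi-decidable set, the identification condition upgrades this to $x\in\overline{\bigl(f^{-1}(C)\bigr)^{\boldsymbol{c}}}^{+}$, which is precisely an effective discontinuity of $f$ at $x$.

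Finally I would invoke Lemma \ref{lem: No eff disc 1}: since $(X,\nu,\mathcal{T}_{1},\tau_{1})$ satisfies a Markov condition for the effective closure and $(Y,\mu,\mathcal{T}_{2},\tau_{2})$ is an arbitrary computable topological space, the $(\nu,\mu)$-computable function $f$ can have no effective discontinuities. This contradicts the effective discontinuity produced in the previous step, so no such $x$ exists and $f$ is continuous.

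I do not expect a genuine obstacle here, since both component lemmas are already proved; the only point requiring a moment of care is making sure the notion of ``effective discontinuity at $x$'' output by Lemma \ref{lem: Disc + identification -> eff disc } (phrased with $\overline{\,\cdot\,}^{+}$) is literally the one ruled out by Lemma \ref{lem: No eff disc 1}, which it is, as both are stated for the effective closure $\overline{\,\cdot\,}^{+}$. I would also remark in passing that the same argument, mutatis mutandis, yields the analogous continuity theorems for the (normed) effective sequential closure by pairing the corresponding variant of Lemma \ref{lem: No eff disc seq 2} with the corresponding variant of Lemma \ref{lem: Disc + identification -> eff disc }.
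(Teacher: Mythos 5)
Your proposal is correct and follows exactly the paper's own argument: the theorem is obtained by combining Lemma \ref{lem: Disc + identification -> eff disc } (to upgrade a discontinuity to an effective one) with Lemma \ref{lem: No eff disc 1} (to rule effective discontinuities out). The paper states this in one line; you have merely unfolded the same two-lemma argument in more detail.
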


\begin{proof}
This is just a joint application of Lemma \ref{lem: No eff disc 1}
and Lemma \ref{lem: Disc + identification -> eff disc }. 
\end{proof}
\begin{thm}
[Continuity result for the Markov condition for effective sequential
closure]\label{thm:Continuity-result-eff-closure-SEQ 2}Suppose that
$(X,\nu,\mathcal{T}_{1},\tau_{1})$ is a computable topological space
that satisfies Markov's condition for the effective sequential closure
and the identification condition between the closure and the effective
sequential closure for semi-decidable sets. 

Suppose that $(Y,\mu,\mathcal{T}_{2},\tau_{2})$ is a computable topological
space where points have co-semi-decidable bases of neighborhood. 

Then $(\nu,\mu)$-computable functions between $X$ and $Y$ are $(\mathcal{T}_{1},\mathcal{T}_{2})$-continuous.
\end{thm}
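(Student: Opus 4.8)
The plan is to mirror exactly the proof of Theorem~\ref{thm:Continuity-result-eff-closure}, swapping in the sequential versions of the two ingredient lemmas. Concretely, I would argue by contradiction: let $f:X\rightarrow Y$ be a $(\nu,\mu)$-computable function and suppose that $f$ fails to be $(\mathcal{T}_{1},\mathcal{T}_{2})$-continuous, i.e.\ there is a point $x\in X$ at which $f$ has a (classical) discontinuity.

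First I would feed this discontinuity into Lemma~\ref{lem: Disc + identification -> eff disc }, in its ``resp.\ effective sequential'' form. Its hypotheses are met here: $(X,\nu,\mathcal{T}_{1},\tau_{1})$ satisfies the identification condition between the closure and the effective sequential closure for semi-decidable sets (assumed), and $(Y,\mu,\mathcal{T}_{2},\tau_{2})$ has co-semi-decidable bases of neighborhood (assumed). The lemma then upgrades the discontinuity of $f$ at $x$ to an \emph{effective sequential discontinuity} of $f$ at $x$.

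Next I would invoke Lemma~\ref{lem: No eff disc seq 2}: since $(X,\nu,\mathcal{T}_{1},\tau_{1})$ satisfies Markov's condition for the effective sequential closure and $(Y,\mu,\mathcal{T}_{2},\tau_{2})$ is a computable topological space, $f$ cannot have an effective sequential discontinuity. This contradicts the previous step, so no discontinuity of $f$ can exist, and $f$ is $(\mathcal{T}_{1},\mathcal{T}_{2})$-continuous.

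There is no real obstacle here: the argument is a verbatim transcription of the proof of Theorem~\ref{thm:Continuity-result-eff-closure} with ``effective closure'' replaced throughout by ``effective sequential closure''. The only point requiring a moment's care is bookkeeping --- making sure that the same notion (the non-normed effective sequential closure) is used consistently in the identification hypothesis, in the conclusion of Lemma~\ref{lem: Disc + identification -> eff disc }, and in the hypothesis of Lemma~\ref{lem: No eff disc seq 2} --- so that the two lemmas genuinely chain together. (The normed case would be handled identically, replacing Lemma~\ref{lem: No eff disc seq 2} and the corresponding branch of Lemma~\ref{lem: Disc + identification -> eff disc } by their normed analogues.)
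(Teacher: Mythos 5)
Your proposal is correct and is exactly the paper's argument: the paper's proof of this theorem is a one-line joint application of Lemma~\ref{lem: No eff disc seq 2} and Lemma~\ref{lem: Disc + identification -> eff disc }, which is precisely the contradiction you spell out. Your bookkeeping remark about consistently using the non-normed sequential closure throughout is the right (and only) point of care.
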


\begin{proof}
This is just a joint application of Lemma \ref{lem: No eff disc seq 2}
and Lemma \ref{lem: Disc + identification -> eff disc }. 
\end{proof}
\begin{thm}
[Continuity result for the Markov condition for normed effective
sequential closure]\label{thm:Continuity-result-eff-closure-SEQ 2-1}Suppose
that $(X,\nu,\mathcal{T}_{1},\tau_{1})$ is a computable topological
space that satisfies Markov's condition for the normed effective sequential
closure and the identification condition between the closure and the
normed effective sequential closure for semi-decidable sets. 

Suppose that $(Y,\mu,\mathcal{T}_{2},\tau_{2})$ is a computable topological
space where points have co-semi-decidable bases of neighborhood. 

Then $(\nu,\mu)$-computable functions between $X$ and $Y$ are $(\mathcal{T}_{1},\mathcal{T}_{2})$-continuous.
\end{thm}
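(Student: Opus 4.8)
The plan is to obtain this theorem exactly as its two predecessors, Theorem \ref{thm:Continuity-result-eff-closure} and Theorem \ref{thm:Continuity-result-eff-closure-SEQ 2}, by chaining together the appropriate ``no effective discontinuity'' lemma with the ``discontinuity upgrades to effective discontinuity'' lemma, here in their \emph{normed effective sequential} versions.

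First I would argue by contradiction. Let $f:X\to Y$ be a $(\nu,\mu)$-computable function and suppose it is not $(\mathcal{T}_{1},\mathcal{T}_{2})$-continuous. Then there is a point $x\in X$ and an open set $B\in\mathcal{T}_{2}$ with $f(x)\in B$ such that $x$ lies in the (classical) closure of $f^{-1}(B)^{\boldsymbol{c}}$; that is, $f$ has a discontinuity at $x$. Next I would apply Lemma \ref{lem: Disc + identification -> eff disc } in its normed sequential form: since points of $Y$ have co-semi-decidable bases of neighborhood, $B$ can be replaced by a smaller co-semi-decidable neighborhood $C$ of $f(x)$, so that $f^{-1}(C)^{\boldsymbol{c}}$ is $\nu$-semi-decidable and still has $x$ in its closure; invoking the identification condition between the closure and the normed effective sequential closure for semi-decidable sets then yields $x\in\overline{f^{-1}(C)^{\boldsymbol{c}}}^{+\text{seq},N}$, i.e. a normed effective sequential discontinuity of $f$ at $x$.

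Finally I would invoke Lemma \ref{lem: No eff disc seq 2}: since $X$ satisfies Markov's condition for the normed effective sequential closure, $(\nu,\mu)$-computable functions out of $X$ cannot have normed effective sequential discontinuities. This contradicts the previous step, so $f$ must be $(\mathcal{T}_{1},\mathcal{T}_{2})$-continuous.

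I do not expect any genuine obstacle: the mathematical content is entirely contained in Lemmas \ref{lem: No eff disc seq 2} and \ref{lem: Disc + identification -> eff disc }, both of which were already stated with the relevant ``(normed)'' variant. The only point requiring care is bookkeeping --- ensuring that the \emph{same} notion of effective closure (the normed effective sequential one) is used uniformly in the Markov condition on $X$, in the identification condition on $X$, and in the notion of effective discontinuity being ruled out, so that the two lemmas compose without a gap.
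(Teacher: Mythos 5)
Your proposal is correct and is exactly the paper's proof: the paper derives this theorem as a joint application of Lemma \ref{lem: No eff disc seq 2} and Lemma \ref{lem: Disc + identification -> eff disc }, both in their normed effective sequential variants, precisely as you describe. Your careful note about using the same notion of effective closure consistently across the Markov condition, the identification condition, and the discontinuity notion is the right bookkeeping point, and there is no gap.
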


\begin{proof}
This is just a joint application of Lemma \ref{lem: No eff disc seq 2}
and Lemma \ref{lem: Disc + identification -> eff disc }. 
\end{proof}
These theorem are not \emph{strictly more general} than previously
known results, as they of course do not allow to recover the effective
continuity theorems of Ceitin, Spreen, Moschovakis, etc, but, as stated,
they are not consequences of known results either, and thus are some
of the most general continuity theorems that exist for computable
functions. 

\subsection{\label{subsec:Banach-Mazur-computability}Banach-Mazur computability}

In this section, we note that our results that rely on sequential
continuity also apply to Banach-Mazur computability. 

Recall the definition of a Banach-Mazur computable function (\cite{Mazur1963}
for real-valued functions, the general definition was given by Hertling
in \cite{Hertling2001}):
\begin{defn}
Let $(X,\nu)$ and $(Y,\mu)$ be numbered sets. A function $f:X\rightarrow Y$
is called \emph{Banach-Mazur $(\nu,\mu)$-computable}, or \emph{BM
$(\nu,\mu)$-computable}, if for every $\nu$-computable sequence
$(u_{n})_{n\in\mathbb{N}}\in X^{\mathbb{N}}$, the sequence $(f(u_{n}))_{n\in\mathbb{N}}$
is $\mu$-computable. 
\end{defn}

Working with Banach-Mazur computability, the relevant notion of effective
closure is the sequential closure. 
\begin{defn}
[Markov condition for Banach-Mazur computability]Let $(X,\nu,\mathcal{T},\tau)$
be a computable topological space. We says that it satisfies a \emph{Markov
condition for Banach-Mazur computable} \emph{functions} if, whenever
$x\in\overline{A}^{+\text{seq}}$, there exists a $\nu$-computable
sequence $(y_{n})_{n\in\mathbb{N}}\in(\{x\}\cup A)^{\mathbb{N}}$
such that $\{n\in\mathbb{N},\,y_{n}=x\}$ is not a c.e. set. 
\end{defn}

Note that the condition above is stronger than simply asking that
$\{x\}$ is not $\nu$-semi-decidable inside $\{x\}\cup A$, in terms
of Markov computability: it says that there is a c.e. set of $\nu$-names
on which the problem is already undecidable. In terms of Markov computability,
the only relevant information is whether or not the problem is decidable
on the whole set of names of points in $\{x\}\cup A$, this set is
in general never c.e. (for instance, in the computable reals, the
set of all names of a single computable real is not c.e.). 

The following theorem is derived exactly as Theorem \ref{thm:Continuity-result-eff-closure-SEQ 2},
we omit its proof:
\begin{thm}
[Banach-Mazur Continuity result]\label{thm:Banach-Mazur-Continuity-result}Suppose
that $(X,\nu,\mathcal{T}_{1},\tau_{1})$ is a computable topological
space that satisfies Markov's condition for Banach-Mazur computable
functions and the identification condition between the closure and
the effective sequential closure for semi-decidable sets. 

Suppose that $(Y,\mu,\mathcal{T}_{2},\tau_{2})$ is a computable topological
space where points have co-semi-decidable bases of neighborhood. 

Then Banach-Mazur $(\nu,\mu)$-computable functions between $X$ and
$Y$ are $(\mathcal{T}_{1},\mathcal{T}_{2})$-continuous.
\end{thm}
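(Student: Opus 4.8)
The plan is to mirror the derivation of Theorem \ref{thm:Continuity-result-eff-closure-SEQ 2} verbatim, replacing the role of Lemma \ref{lem: No eff disc seq 2} by the observation that a Banach-Mazur computable function satisfying the hypotheses cannot have an effective \emph{sequential} discontinuity. So first I would establish the analogue of Lemma \ref{lem: No eff disc seq 2} for Banach-Mazur computability: if $(X,\nu,\mathcal{T}_{1},\tau_{1})$ satisfies the Markov condition for Banach-Mazur computable functions, and $f:X\rightarrow Y$ is BM $(\nu,\mu)$-computable, then $f$ has no effective sequential discontinuity. Indeed, suppose $f$ had such a discontinuity at $x$: there is an effective open $B\ni f(x)$ and a $\nu$-computable sequence $(u_{n})_{n\in\mathbb{N}}$ converging to $x$ with $f(u_{n})\notin B$ for all $n$. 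Then $x\in\overline{A}^{+\text{seq}}$ where $A=\{u_{n},n\in\mathbb{N}\}\subseteq (f^{-1}(B))^{\boldsymbol{c}}$, so the Markov condition for BM-computable functions yields a $\nu$-computable sequence $(y_{m})_{m\in\mathbb{N}}\in(\{x\}\cup A)^{\mathbb{N}}$ such that $S=\{m,\,y_{m}=x\}$ is not c.e.

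Now I derive a contradiction using BM-computability. Since $f$ is BM $(\nu,\mu)$-computable and $(y_{m})$ is $\nu$-computable, $(f(y_{m}))_{m\in\mathbb{N}}$ is a $\mu$-computable sequence. But $y_{m}\in\{x\}\cup A$, so either $y_{m}=x$, whence $f(y_{m})=f(x)\in B$, or $y_{m}=u_{k}$ for some $k$, whence $f(y_{m})=f(u_{k})\notin B$. Since $B$ is an effective open set, it is $\mu$-semi-decidable, so from the $\mu$-computable sequence $(f(y_{m}))$ we can semi-decide the set $\{m,\,f(y_{m})\in B\}=S$; this exhibits $S$ as c.e., contradicting the choice of $(y_{m})$. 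Hence $f$ has no effective sequential discontinuity.

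With this Banach-Mazur analogue of Lemma \ref{lem: No eff disc seq 2} in hand, the proof of Theorem \ref{thm:Banach-Mazur-Continuity-result} is the same argument as Theorem \ref{thm:Continuity-result-eff-closure-SEQ 2}: if a BM-computable $f$ were discontinuous at some point, then by Lemma \ref{lem: Disc + identification -> eff disc } — whose hypotheses are exactly the identification condition between closure and effective sequential closure for semi-decidable sets on $X$, together with co-semi-decidable neighborhood bases on $Y$ — it would have an effective sequential discontinuity at that point, contradicting the previous paragraph. (Note that Lemma \ref{lem: Disc + identification -> eff disc } is stated for $(\nu,\mu)$-computable $f$, but its proof only uses that $f^{-1}(C)^{\boldsymbol{c}}$ is $\nu$-semi-decidable for $C$ co-semi-decidable and that $x$ is topologically adherent to it; neither requires $f$ to be computable in the strong sense, only that a discontinuity of $f$ be given — so it applies verbatim once we have produced the discontinuity, and in fact we only need the purely topological step of passing from $B$ to a co-semi-decidable $C\subseteq B$ and then applying the identification hypothesis.) Therefore $f$ is $(\mathcal{T}_{1},\mathcal{T}_{2})$-continuous.

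The one point that deserves care — and which I expect to be the only genuine subtlety — is exactly why the definition of the Markov condition for Banach-Mazur functions was phrased the way it was: one needs the failure set $S=\{m,\,y_{m}=x\}$ to be \emph{not c.e.} (rather than merely: $\{x\}$ is not $\nu$-semi-decidable inside $\{x\}\cup A$), because the only handle BM-computability gives us is a computable \emph{sequence} of names, and from such a sequence together with the $\mu$-semi-decidability of $B$ we can only conclude c.e.-ness of $S$, not decidability; so a hypothesis about $\nu$-semi-decidability of $\{x\}$ relative to $\{x\}\cup A$ (which concerns all names simultaneously, a set that is typically not c.e.) would not close the loop. Once this is understood, no further obstacle arises and the remaining steps are the routine repackaging indicated above.
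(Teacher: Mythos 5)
Your overall plan is the one the paper itself indicates (it derives this theorem ``exactly as Theorem \ref{thm:Continuity-result-eff-closure-SEQ 2}'' and omits the details), and your Banach--Mazur analogue of Lemma \ref{lem: No eff disc seq 2} is correct: applying $f$ to the sequence $(y_{m})$ supplied by the Markov condition for Banach--Mazur computable functions, and semi-deciding membership of $f(y_{m})$ in the effective open set $B$ (which is $\mu$-semi-decidable by Malcev's condition), exhibits $S=\{m,\,y_{m}=x\}$ as c.e., a contradiction. Your closing observation about why the condition must be phrased via a single non-c.e.\ index set, rather than via non-semi-decidability of $\{x\}$ in $\{x\}\cup A$, is exactly the remark the paper makes after the definition.

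The gap is in your parenthetical justification for invoking Lemma \ref{lem: Disc + identification -> eff disc }. You assert that the semi-decidability of $f^{-1}(C)^{\boldsymbol{c}}$ ``does not require $f$ to be computable in the strong sense.'' It is the other way around: this is precisely the one step of that lemma that uses Markov computability. For a $(\nu,\mu)$-computable $f$, the preimage of the $\mu$-semi-decidable set $C^{\boldsymbol{c}}$ is $\nu$-semi-decidable because one computes a $\mu$-name of $f(z)$ from a $\nu$-name of $z$ and feeds it to the semi-decision procedure for $C^{\boldsymbol{c}}$. Banach--Mazur computability provides no such uniform name-to-name procedure; it only guarantees that along each $\nu$-computable sequence $(v_{k})$ the trace $\{k,\,f(v_{k})\in C^{\boldsymbol{c}}\}$ is c.e. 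So you are not entitled to apply the identification condition --- which is stated only for $\nu$-semi-decidable sets --- to $f^{-1}(C)^{\boldsymbol{c}}$, and the passage from a topological discontinuity to an effective sequential one is unjustified as written. To close this you need either a separate argument that $f^{-1}(C^{\boldsymbol{c}})$ is genuinely $\nu$-semi-decidable in your situation, or a strengthened identification condition that applies to sets which are merely semi-decidable along every computable sequence (which is what Banach--Mazur computability actually delivers, and what the classical separable-case arguments implicitly exploit). The paper's one-line proof-by-reference glosses over the same point, so treat this as a place where the write-up must add content rather than cite the earlier lemma verbatim.
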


Hertling has shown in \cite{Hertling2002} that Banach-Mazur computable
functions defined on the computable reals, while always continuous,
do not have to be effectively continuous. 

This result can be used as a sort of reverse mathematics tool: using
solely assumptions that are true of Banach-Mazur computable function,
one can never obtain an effective continuity result. 

This indicates that in order to prove an effective continuity result
basing ourselves on Markov conditions, we have to introduce arguments
that are significantly new compared with those that are used in the
proof of Theorem \ref{thm:Banach-Mazur-Continuity-result} above (and
also compared with those of Theorems \ref{thm:Continuity-result-eff-closure},
\ref{thm:Continuity-result-eff-closure-SEQ 2} and \ref{thm:Continuity-result-eff-closure-SEQ 2-1}).
But this does not imply that such an effective continuity result cannot
be obtained under the hypotheses that we present in the next section,
Section \ref{subsec:Effective-Continuity-conjecture}. Indeed, Banach-Mazur
computable functions defined on a recursive Polish space are continuous,
and Markov computable functions defined there are effectively continuous:
the fact that under certain hypotheses we can prove a continuity result
for Banach-Mazur computable functions does not imply that we cannot
prove an effective continuity result for Markov computable functions
under those same hypotheses. However, the proof should incorporate
significantly new arguments.

\subsection{\label{subsec:Effective-Continuity-conjecture}Effective Continuity
conjecture }

Each of the conditions stated to prove Theorem \ref{thm:Continuity-result-eff-closure}
and Theorem \ref{thm:Continuity-result-eff-closure-SEQ 2} can be
rendered uniformly. It is only after we have defined such uniform
conditions that we can expect to prove effective continuity results. 

There are three hypotheses that need to be expressed uniformly: 
\begin{itemize}
\item The uniform version of Markov's condition, which was already quoted
in Section \ref{subsec:Markov-Conditions}. 
\item A uniform version of the identification condition between closure
and effective closure, which we detail below. 
\item And the fact that points in the codomain of the considered function
uniformly have neighborhood bases of uniformly co-semi-decidable sets,
this was already defined in Section \ref{subsec:Multi-numbering-of-neighborhood bases }. 
\end{itemize}
The uniform version of the identification between the closure and
effective closure for semi-decidable sets can be stated as follows:
\begin{defn}
Say that a computable topological space $(X,\nu,\mathcal{T},\tau)$
satisfies \emph{the uniform} \emph{identification condition between
the closure and the effective closure for semi-decidable sets }if
there is an algorithm that, given the $\nu$-name of a point $x$
and a $\nu_{SD}$-name of a semi-decidable set $A$, such that $x\in\overline{A}$,
produces the code for a function that testifies for the relation $x\in\overline{A}^{+}$,
i.e. a computable map that, given the $\tau$-name of an open set
$O$ which contains $x$, produces the $\nu$-name of a point in $O\cap A$. 
\end{defn}

Using curryfication, one can see easily that the above condition exactly
asks the following: that there be a $(\nu_{SD}\times\nu\times\tau,\nu)$-computable
multi-function $P$ (i.e. the result of $P$ is allowed to depend
on the given names for its input) such that, for all $A$ semi-decidable,
$x$ computable point, and $O$ effective open set, we have: 
\[
P(A,x,O)\subseteq A\cap O.
\]

This map can be computed under the hypothesis that $x$ is adherent
to $A$, and that $x$ belongs to $O$ (in which case $A\cap O$ is
necessarily non-empty). 

Compare then this to Moschovakis' Effective Choice Axiom condition,
Condition (B) in \cite{Moschovakis1964}. (We slightly reformulate
it so that it does not make reference to a basis, but this is easily
seen to be equivalent to Moschovakis' condition when the topology
comes from a basis.) A computable topological space $(X,\nu,\mathcal{T},\tau)$
satisfies \emph{Moschovakis' effective choice axiom} if there is a
$(\nu_{SD}\times\nu\times\tau,\nu)$-computable multi-function $C$
such that, for any triple $(A,x,O)$, where $A$ is a semi-decidable
set, $x$ a computable point, and $O$ an open set that contains $x$,
we have: 
\[
C(A,x,O)\subseteq A\cap O.
\]

The \emph{only difference} with what is above is that this map can
be computed under the sole assumption that $A\cap O\ne\emptyset$. 

Thus Moschovakis' condition is more restrictive than ``effective
identification between closure and effective closure for semi-decidable
sets'', because it asks for a program that, while it should perform
exactly the same computation as $P$, it should be able to do it on
a wider set of inputs. 
\begin{problem}
Can we obtain an effective continuity theorem that generalizes Moschovakis'
theorem using the effective identification between closure and effective
closure for semi-decidable sets condition? 
\end{problem}

Obtaining this even in an effectively complete recursive metric space
would be interesting. 

\bibliographystyle{alpha}
\bibliography{BiblioContinuityMarkovCond}

\end{document}